\documentclass[12pt,normalheadings]{article}

\usepackage[dvipsnames]{xcolor}

\usepackage{mathrsfs}

\usepackage{amssymb,amsthm,amsfonts,amsbsy,latexsym}
\usepackage[reqno]{amsmath}
\usepackage[english]{babel}

\usepackage{color}
\usepackage{graphicx}
\usepackage{bbm}
\usepackage{comment}
\usepackage{a4wide,graphicx, listings,lscape, epstopdf, units, textcomp, fancyhdr, url, soul, color}
\usepackage[textsize=small]{todonotes}
\usepackage{enumitem}

\usepackage[backend=biber, style=numeric, giveninits=true, maxnames=4, minnames=4]{biblatex}
\usepackage[nottoc]{tocbibind}
\addbibresource{literature.bib}

\usepackage[utf8]{inputenc}
\usepackage[a4paper,margin=3cm]{geometry}
\usepackage{setspace}
\usepackage[normalem]{ulem}

\usepackage{wasysym}
\usepackage{phonetic}
\usepackage{datetime}

\usepackage{mathtools}
\mathtoolsset{showonlyrefs}

\definecolor{MyDarkblue}{rgb}{0,0.08,0.50}
\definecolor{Brickred}{rgb}{0.65,0.08,0}

\usepackage{hyperref}
\hypersetup{
	colorlinks=true,       
	linkcolor=MyDarkblue,          
	citecolor=Brickred,        
	filecolor=red,      
	urlcolor=cyan           
}

\usepackage{tikz}
\usepackage{pgf}
\usepackage{amsbsy}
\usepackage{amssymb,amsthm,amsmath,amsfonts}
\usepackage{comment}
\usetikzlibrary{calc,through,backgrounds}
\usetikzlibrary{topaths}
\usetikzlibrary{patterns}
\usetikzlibrary{arrows,shapes, petri,positioning, fit, scopes}
\usetikzlibrary{shapes.geometric}

\definecolor{dgreen}{RGB}{20,110,0}
\definecolor{dred}{RGB}{180,0,0}
\definecolor{lired}{RGB}{255,100,100}

\tikzstyle{vertex}=[circle,fill=orange!60,minimum size=10pt,inner sep=0pt]
\tikzstyle{tedge} = [draw,ultra thick,->,>=stealth, orange]
\tikzstyle{esq}=[circle,fill=white,minimum size=10pt,inner sep=0pt]
\tikzstyle{up}=[<-,>=stealth]

\newtheorem*{theorem*}{Theorem}
\newtheorem{theorem}{Theorem}[section]
\newtheorem{lemma}[theorem]{Lemma}

\newtheorem{proposition}[theorem]{Proposition}
\newtheorem{corollary}[theorem]{Corollary}

\theoremstyle{definition}
\newtheorem{definition}[theorem]{Definition}


\newcommand{\prob}{\mathbb{P}}


\newcommand{\CC}{\mathcal{C}}




\newcommand{\cA}{\mathcal{A}}\newcommand{\cB}{\mathcal{B}}
\newcommand{\cD}{\mathcal{D}}\newcommand{\cE}{\mathcal{E}}

\newcommand{\cP}{\mathcal{P}}\newcommand{\cR}{\mathcal{R}}
\newcommand{\cS}{\mathcal{S}}







\newcommand{\Var}{{\rm Var}}

\setcounter{secnumdepth}{3} 
\setcounter{tocdepth}{2}    

\newcommand{\R}{\mathbb{R}}
\newcommand{\N}{\mathbb{N}}
\newcommand{\Z}{\mathbb{Z}}

\newcommand{\union}{\cup}

\newcommand{\equalsd}{\overset{d}{=}}
\newcommand{\dd}{\mathrm{d}}

\renewcommand{\emptyset}{\varnothing}



\newcommand{\CF}{\mathcal {F}}

\newcommand{\CI}{\mathcal {I}}
\newcommand{\CJ}{\mathcal {J}}

\newcommand*{\be}{\begin{equation}}
	\newcommand*{\ee}{\end{equation}}
\newcommand*{\ba}{\begin{aligned}}
	\newcommand*{\ea}{\end{aligned}}
\newcommand*{\barr}{\begin{array}{c}}
	\newcommand*{\earr}{\end{array}}

\def \toindis  {\buildrel {d}\over{\longrightarrow}}
\def \toas     {\buildrel {a.s.}\over{\longrightarrow}}

\newcommand*{\ind}{\mathbbm{1}}
\makeatletter
\def\namedlabel#1#2{\begingroup
	#2%
	\def\@currentlabel{#2}%
	\phantomsection\label{#1}\endgroup
}

\makeatother

\newcommand{\bes}{\begin{equation*}}
	\newcommand{\ees}{\end{equation*}}

\newcommand{\E}[1]{\mathbb{E}\left[#1\right]}

\renewcommand{\N}{\mathbb{N}}

\setlength{\parskip}{1ex plus 0.5ex minus 0.2ex}
\setlength\parindent{0pt}
\numberwithin{equation}{section}

\newcommand{\floor}[1]{\lfloor #1\rfloor}
\newcommand{\ceil}[1]{\lceil #1\rceil}

\newcommand{\invisible}[1]{}

\makeatletter
\newcommand{\leqnomode}{\tagsleft@true\let\veqno\@@leqno}
\newcommand{\reqnomode}{\tagsleft@false\let\veqno\@@eqno}
\makeatother
\newlength{\tagmarginsep} 
\setlength{\tagmarginsep}{1cm}

\begin{document}

\vspace*{\fill}

\begin{minipage}{\textwidth}

\begin{center}
\fontsize{24pt}{12pt}\selectfont
\textbf{Large degree vertices in random directed acyclic graphs}\\

\vspace{80pt}

\Large MASTERARBEIT\\
zur Erlangung des akademischen Grades M.Sc.\ im Fach Mathematik an der MNTF der Universität Augsburg\\

\vspace{40pt}

eingereicht von\\

\vspace{40pt}

\fontsize{18pt}{12pt}\selectfont \textbf{Rafael Engel}\\

\end{center}

\vspace{40pt}

\Large
\begin{tabbing}
Gutachter \qquad \=Prof. Dr. Markus Heydenreich\\
\>\=Prof. Dr. Stefan Großkinsky\\
\> \\

Datum \>21.02.2025
\end{tabbing}

\end{minipage}

\vspace*{\fill}

\thispagestyle{empty}

\newpage

\begin{center}
    \textbf{Acknowledgement}
\end{center}

I would like to sincerely thank Dr. Bas Lodewijks for bringing up the subjects of this Master's thesis, as well as for his invaluable discussions and feedback throughout the process. His insights and support have been greatly appreciated.

Furthermore, I extend my gratitude to Prof. Dr. Markus Heydenreich and Prof. Dr. Stefan Großkinsky for their supervision and for reviewing my thesis.

Their contributions have been essential in the completion of this work, and I am deeply grateful for their support.

\newpage

\begin{abstract}
    This Master's thesis examines the properties of large degree vertices in random recursive directed acyclic graphs (RRDAGs), a generalization of the well-studied random recursive tree (RRT) model. Using a novel adaptation of Kingman’s coalescent, we extend results from RRTs to RRDAGs, focusing on different vertex properties. For large degrees, we establish the asymptotic joint distribution of the degree of multiple uniform vertices, proving that they follow a multivariate geometric distribution, and obtain results on maximal and near-maximal degree vertices. In addition, we consider a version of vertex depth that we call ungreedy depth and describe its asymptotic behavior, along with the labels, of single uniform vertices with a given large degree. Finally, we extend this analysis to multiple uniform vertices by deriving the asymptotic behavior of their labels conditional on large degrees.
\end{abstract}

\newpage

\tableofcontents

\newpage

\onehalfspacing

\section{Introduction}

Random recursive directed acyclic graphs (RRDAGs) are random structures generalized from the widely studied random recursive tree model. A random recursive tree (RRT) is a rooted tree with labeled vertices that is constructed recursively as follows: At step 1, we start with the tree consisting only of the root vertex with label 1, and then at each step $i>1$, we add vertex $i$ to the graph by connecting it to a uniformly chosen vertex already present in the graph. On the other hand, an RRDAG is a labeled graph constructed in the same way, with the difference that each new vertex is instead connected to $m$ distinct uniformly chosen vertices that are already present. However, in the first $m$ steps of the recursive RRDAG construction, where less than $m$ vertices already exist in the graph, the new vertex is connected to all existing vertices in the graph instead. The RRT model was first investigated in \cite{Na.Rapoport.1970} in 1970 and has since attracted considerable interest, whereas the study of RRDAGs is somewhat more recent and less studied in detail. The first mention of the RRDAG model in the version considered here is \cite{Diaz.Serna.Spirakis.Toran.1994} in 1994, where RRDAGs are viewed as a model to study random circuits, and around the same time, \cite{Devroye.Lu.1995} studies the maximum degree of RRDAGs. Applications of the RRT model include disease spread \cite{Meir.Moon.1974}, contact tracing \cite{Bansaye.Gu.Yuan.2023}, and social network analysis \cite{Crane.Xu}. Similarly, the RRDAG model enjoys applications in network theory \cite{Briend.Calvillo.Lugosi.2022} and computer science \cite{Devroye.Lu.1995}.\par

Various properties of RRTs and RRDAGs have been explored in the literature. In particular, for RRTs, research has focused on the degree of vertices in RRTs, where degree usually refers to the in-degree of vertices. This includes studies on the degree distribution of vertices (joint distribution of the degree of multiple vertices) \cite{Addario.Eslava.2018,Lodewijks.2023}, the degree (and depth) of vertices with given labels \cite{Devroye.1988,Kuba.Panholzer.2007,Lodewijks.2023}, the number of vertices of a certain degree \cite{Addario.Eslava.2018, Janson.2005,Mahmoud.Smythe.1991,Mahmoud.Smythe.1992,Meir.Moon.1974,Na.Rapoport.1970,Zhang.2015}, the asymptotic distribution of the maximum degree \cite{Addario.Eslava.2018,Devroye.Lu.1995,Goh.Schmutz.2002,szymanski.1990}, and the degree profile (the evolution of the degree of a fixed vertex as other vertices are added to the graph) \cite{Kuba.Panholzer.2007,Mahmoud.2014}. Other important subjects that have been studied for RRTs are the height (length of the longest leaf-to-root path) \cite{Devroye.Fawzi.Fraiman.2012,Pittel.1994}, the height profile (number of vertices at each height level) \cite{Fuchs.Hwang.Neininger.2006}, and the depth of single and multiple vertices (length of the shortest path from that vertex to the root) \cite{Eslava.2021,Lodewijks.2023}. In addition, the graph distance \cite{Dobrow.1996,Lodewijks.2023,Su.Liu.Feng.2006}, the root finding problem (identifying the root vertex without information on the construction order) \cite{Bubeck.Devroye.Lugosi.2017,Meir.Moon.1974}, and the broadcasting problem \cite{Addario.Devroye.Lugosi.Velona.2022} have been investigated, among other topics. In the case of RRDAGs, research has explored topics largely similar to those studied for RRTs, albeit to a lesser extent. Investigations include the asymptotic behavior of the normalized maximum degree \cite{Devroye.Lu.1995} and the degree profile \cite{Mahmoud.2014}. The number and characteristics of leaves have been examined in \cite{Kuba.Sulzbach.2017, Mahmoud.Tsukiji.2002, Moler.2014, Tsukiji.Mahmoud.2001}. Moreover, studies on vertex depths and path lengths within RRDAGs can be found in \cite{Arya.Golin.Mehlhorn.1999, Broutin.Fawzi.2012, Devroye.Janson.2011, Diaz.Serna.Spirakis.Toran.1994, Tsukiji.Xhafa.1996}. Additionally, RRDAGs serve as a model for random circuits, where connections are established either with or without replacement, and have been analyzed in this context \cite{Arya.Golin.Mehlhorn.1999, Broutin.Fawzi.2012, Diaz.Serna.Spirakis.Toran.1994, Mahmoud.Tsukiji.2002, Moler.2014, Tsukiji.Mahmoud.2001, Tsukiji.Xhafa.1996}. Other notable topics of study include the enumeration of vertex descendants \cite{Janson.2024, Knuth.1997} and the problem of root finding \cite{Briend.Calvillo.Lugosi.2022}. Many other properties of these models have also been considered. The selection presented here is not comprehensive, especially for RRTs. For an overview of the research literature on RRTs, see also \cite{Devroye.1998,Drmota.2009,Mahmoud.Lucker.1992,Smythe.Mahmoud.1995}.\par

To analyze RRTs and RRDAGs, different approaches have been used in previous works. For both models, one of the most common methods is to use their recursive definition. In the case of RRTs, another widely used approach relies on the fact that an RRT is a uniform element of the class of increasing trees (labeled trees where vertex labels increase along root-to-leaf paths). Additionally, representations using Pólya urns \cite{Devroye.Lu.1995,Janson.2005,Mahmoud.Smythe.1991}, which allow the application of martingale techniques, as well as branching processes \cite{Bansaye.Gu.Yuan.2023,Devroye.1998,Pittel.1994}, are frequently employed. A further method involves representing an RRT using a version of Kingman’s coalescent \cite{Addario.Eslava.2018,Eslava.2017,Eslava.2021,Lodewijks.2023}, which will be the main approach in this work. For RRDAGs, aside from the recursive definition, one of the most common techniques is the Pólya urn representation, particularly in cases where RRDAGs are viewed as random circuits.\par

Several variations and generalizations of RRTs and RRDAGs have been studied. A typical variation of the RRDAG model is to allow multiple root vertices at the start of the construction process or to modify the attachment rule by connecting newly added vertices to $m$ uniformly chosen vertices with replacement, rather than without replacement as in our definition. Many results in the literature have been established for both selection with and without replacement. In the case of RRTs, a further generalization is the weighted recursive tree, where each vertex is assigned a weight, and new vertices attach with probabilities proportional to these weights \cite{Borovkov.2006,Devroye.Fawzi.Fraiman.2012, Eslava.Lodewijks.Ortgiese.2021, Lodewijks.2024,Mailler.2019,Pain.2022,Senizergues.2021}. A notable special case of weighted recursive trees is the preferential attachment tree model, in which the weights of vertices are proportional to their degrees. For an overview on the preferential attachment model, see e.g.\ \cite{Hofstad.2017}. Another variation of the RRT model is the random recursive tree with freezing, where random vertices become frozen at a certain point in the construction process, meaning that new vertices cannot attach to them \cite{Bellin.2023,Bellin.2024}.\par

As mentioned earlier, this Master's thesis is based on the Kingman coalescent construction for RRTs, first observed in \cite{Pitman.1999}. While the standard recursive construction of an RRT builds the tree in a bottom-up manner by successively adding vertices until the graph contains $n$ vertices, Kingman’s coalescent provides a top-down approach. This method begins with $n$ isolated vertices and progressively adds connections in reverse order compared to the recursive construction until a single connecting component remains. One key advantage of this approach is that all vertices remain interchangeable, unlike in the recursive construction, where arrival times influence vertex properties such as degree and depth. Additionally, this construction offers a convenient way to decouple the analysis of multiple vertices, significantly simplifying the study of vertex degree and depth in an RRT. Kingman’s coalescent for RRTs has been applied in several studies: In \cite{Addario.Eslava.2018}, it is used to analyze the degree distribution of multiple vertices, the number of vertices with certain large degrees, and the maximum degree in an RRT. Similarly, \cite{Eslava.2017} studies the number of large degree vertices, and \cite{Eslava.2021} investigates the depth of multiple vertices conditional on their degrees. Furthermore, \cite{Lodewijks.2023} applies this construction to analyze properties of multiple vertices with given degrees or labels, and \cite{Bellin.2023,Bellin.2024} use a version of the coalescent for random recursive trees with freezing.\par

The main objective of this Master's thesis is to generalize results from \cite{Addario.Eslava.2018} and \cite{Lodewijks.2023}, which analyze vertex properties in RRTs, to the setting of RRDAGs using a generalized version of the Kingman coalescent construction. First, we introduce this generalized version and prove that it follows the same law as the recursive construction for RRDAGs. We then analyze, for large degrees, the degree distribution of multiple uniform vertices in an RRDAG and show that it asymptotically follows a multivariate geometric distribution. Using this result, we establish that numbers of vertices with certain large degrees converge in distribution to the marginals of an inhomogeneous Poisson point process, derive a precise distributional result for the maximum degree, and prove asymptotic normality for the number of vertices with a certain smaller degree. Next, we study a version of vertex depth in RRDAGs, which we refer to as the ungreedy depth, and describe how the ungreedy depth and label behave for a single uniform vertex conditional on having a large degree. Finally, we extend this analysis to multiple uniform vertices by deriving a formula for their labels conditional on having large degrees.

\newpage

\section{Results}

In this section, we introduce the definition of the model of interest along with the relevant notation. We then proceed to state the main results of this work.

\subsection{Definition of the model}

Throughout, we use the notation $\N \coloneq \{1,2,\ldots\}$, $\N_0\coloneq\N\cup\{0\}$ and $[n] \coloneq \{1,\ldots,n\}$ for each $n \in \N$. We consider a labeled directed graph $(V,E)$ with vertex set $V = [n]$ for some $n \in \N$ and edge set $E \subset \{(v,w)\colon v,w \in V\}$. The natural number representing a vertex is then referred to as its $\textbf{label}$. In such a graph, a sequence of vertices $(v_1, v_2,\ldots,v_{k+1})$ is a \textbf{directed path} of length $k \in \N$ if
\begin{equation*}
    \forall i \in [k]\colon (v_i, v_{i+1}) \in E
\end{equation*}
and an \textbf{undirected path} of length $k \in \N$ if
\begin{equation*}
    \forall i \in [k]\colon (v_i, v_{i+1}) \in E \lor (v_{i+1}, v_i) \in E.
\end{equation*}
A labeled directed graph is called
\begin{itemize}
    \item $\textbf{connected}$ if, for each pair of vertices, there is an undirected path joining those vertices,
    \item $\textbf{acyclic}$ if it does not contain any directed cycles, i.e.\ directed paths that visit the same vertex more than once,
    \item $\textbf{increasing}$ if the labels are decreasing along all directed paths.
\end{itemize}
We observe that if a graph is increasing, then it is acyclic in particular. For $v \in V$, we call a vertex $w \in V$ an \textbf{out-neighbor} if $(v,w) \in E$ and an \textbf{in-neighbor} if $(w,v) \in E$. Then, the number of out-neighbors of $v$ is denoted by $\textbf{outdeg}(v)$. On the other hand, we call the number of in-neighbors of $v$ the $\textbf{degree}$ of vertex $v$, denoted by $d_V(v)$. For all $n, m\in\N$ we define $\CI_n^{(m)}$ to be the set of increasing directed graphs on $[n]$ satisfying the condition
\begin{equation}
    \label{eq:outdeg_condition_for_dag}
    \forall v \in [n]\colon \text{outdeg}(v) = m\land (v-1).
\end{equation}
Now, we are ready to formally define RRDAGs.
\begin{definition}[Random recursive directed acyclic graph, short RRDAG]
    \label{def:dag}
    \ \\
    Fix $m \in \N$. $(G_n^{(m)})_{n \in \N}$ is the sequence of increasing directed graphs recursively constructed by taking $G_1^{(m)}$ as the graph with $V_{G_1^{(m)}}=\{1\}$, $E_{G_1^{(m)}}=\emptyset$ and constructing $G_{i+1}^{(m)}$ from $G_i^{(m)}$ for every $i \in \N$ in the following way: Add the vertex $i+1$ to $G_i^{(m)}$, i.e.\ $V_{G_{i+1}^{(m)}} = V_{G_i^{(m)}}  \union \{i+1\}$, and connect it with directed edges to $m \land i$ distinct vertices $v_k \in G_i^{(m)},\: k \in \{1,\ldots,m \land i\}$ that are selected uniformly at random ($E_{G_{i+1}^{(m)}} = E_{G_i^{(m)}} \union \bigcup_{k=1}^{m \land i}\{(i+1,v_k)\}$). Then, for $n \in \N$, the graph $G_n^{(m)}$ is an \textbf{RRDAG}.
\end{definition}
Note that choosing $m=1$ in Definition \ref{def:dag} results in a random recursive tree (short \textbf{RRT}). As we later prove, $G_n^{(m)}$ from Definition \ref{def:dag} is a uniformly random element of $\CI_n^{(m)}$. Generally, in a rooted graph, the \textbf{depth} of a vertex is defined as the length of the shortest directed path from that vertex to the root. Similarly, we define the \textbf{ungreedy depth} $u_{G_n^{(m)}}(v)$ of a vertex $v\in G_n^{(m)}$ as the length of the unique directed path from $v$ to the root that visits the highest possible labeled vertex at each step along the path (justifying the terminology ``ungreedy").

\subsection{Statement of the main results}
\label{subsec:statement_results}

Fix $m\in\N$. Let $G_n^{(m)}$ be an RRDAG with $n\in\N$. To keep notation simple in the results, we set $d_n(v)\coloneq d_{G_n^{(m)}}(v)$ and $u_n(v)\coloneq u_{G_n^{(m)}}(v)$ for $v\in[n]$. Our first result concerns the asymptotic joint degree distribution for $k\in\N$ uniformly random vertices in a regime of large degrees, and is a generalization of a result in \cite{Addario.Eslava.2018} for RRTs. By large degrees, we refer to degrees of order $O(\log n)$ as $n\to\infty$, since this is the typical order of the degree of vertices that receive the largest degrees in RRDAGs. The theorem then shows that, even though they are dependent, the large degrees of $k$ distinct uniform vertices $V_1,\ldots,V_k$ from the set $[n]$ behave asymptotically like independent geometric random variables, as $n\to\infty$. Its proof is presented in Section \ref{subsec:asymptotic_joint_degree_distribution}.
\begin{theorem}
    \label{theorem:joint_degree_distribution}
    Fix $c\in(0,m+1)$ and $k\in\N$. Let $V_1,\ldots,V_k$ be distinct vertices selected uniformly at random from $[n]$. Then, there exists a constant $\alpha>0$ such that uniformly over natural numbers $d_1,\ldots,d_k<c\log n$,
    \begin{equation}
        \prob(d_n(V_i)\geq d_{i}, i \in [k]) = \left(\frac{m}{m+1}\right)^{\sum_i d_i}(1+o(n^{-\alpha})),
    \end{equation}
    as $n\to\infty$.
\end{theorem}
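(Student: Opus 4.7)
The plan is to exploit the generalized Kingman coalescent representation of RRDAGs introduced earlier in the thesis. Running the coalescent from $t=n$ down to $t=1$, at each transition $t\to t-1$ a uniformly chosen alive vertex dies and deposits $m\wedge(t-1)$ directed edges to a uniformly chosen subset of the remaining alive vertices. The crucial feature is that the $n$ vertices are exchangeable before labels are assigned in order of death, so a uniformly random $k$-tuple $V_1,\ldots,V_k$ may be identified with any fixed $k$-tuple of pre-marked vertices. Writing
\begin{equation*}
    d_n(V_i)=\sum_{t=m+2}^{n}B_i(t)+R_i,
\end{equation*}
where $B_i(t)$ is the indicator that $V_i$ is alive at step $t$ and is chosen as a target at the transition $t\to t-1$, and $R_i\leq m$ is a bounded deterministic contribution from the initial steps $t\leq m+1$, reduces the problem to analyzing the joint process $(B_1(t),\ldots,B_k(t))_{t}$.

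Conditionally on all $V_i$ being alive at step $t$, the joint law of $(B_i(t))_{i\in[k]}$ is hypergeometric: the dying vertex is chosen uniformly among the $t$ alive vertices, and if it lies outside $\{V_1,\ldots,V_k\}$ (probability $(t-k)/t$), then $m$ targets are chosen uniformly without replacement from the remaining $t-1$. Elementary hypergeometric estimates then give
\begin{equation*}
    \prob\bigl(B_i(t)=b_i,\,i\in[k]\bigr)=\prod_{i=1}^{k}\Bigl(\frac{m}{t}\Bigr)^{b_i}\Bigl(1-\frac{m}{t}\Bigr)^{1-b_i}\bigl(1+O(k^2/t)\bigr).
\end{equation*}
Combined with an analogous factorization of the joint survival probabilities $\prod_{s>t}(1-k/s)\cdot(1+o(1))\approx(t/n)^k$, this yields asymptotic independence of the coalescent trajectories of the marked vertices. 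For the single-vertex marginal, conditioning on the label $v$ of $V_1$ gives the exact representation $d_n(v)\equalsd\sum_{w=v+1}^{n}X_{w,v}$ for $v>m+1$, where the $X_{w,v}$ are independent Bernoulli$(m/(w-1))$; a quantitative Le Cam bound reduces this to $\Poi(m\log(n/v))$ up to an $O(1/v)$ error. Averaging over $v$ uniform in $[n]$ via the substitution $v=nx$ then converts the sum into the integral
\begin{equation*}
    \int_0^1\prob(\Poi(-m\log x)\geq d)\,\dd x=1-\sum_{j=0}^{d-1}\frac{m^j}{(m+1)^{j+1}}=\Bigl(\frac{m}{m+1}\Bigr)^d,
\end{equation*}
which gives exactly the desired leading order.

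Combining these two ingredients, the joint tail factorizes as
\begin{equation*}
    \prob(d_n(V_i)\geq d_i,\,i\in[k])=\prod_{i=1}^{k}\prob(d_n(V_i)\geq d_i)\cdot\bigl(1+o(n^{-\alpha})\bigr)=\Bigl(\frac{m}{m+1}\Bigr)^{\sum_i d_i}\bigl(1+o(n^{-\alpha})\bigr).
\end{equation*}
The main obstacle is to show that all accumulated errors---from the hypergeometric-to-product factorization at each step $t$, from the Le Cam approximation on the single-vertex marginal, and from replacing the Riemann sum by its integral---are uniformly $o(n^{-\alpha})$ for $d_1,\ldots,d_k<c\log n$. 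The constraint $c<m+1$ is precisely what ensures that the leading factor $(m/(m+1))^{\sum_i d_i}$, which is of size $n^{-c\log((m+1)/m)}$, dominates the combinatorial corrections that grow polynomially in the $d_i$'s, producing some $\alpha>0$ that works uniformly across the permitted range.
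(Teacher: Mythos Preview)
Your approach has a genuine gap in the error control. The Le Cam bound you invoke for the single-vertex marginal gives only a total-variation (hence \emph{additive}) error of order $O(1/v)$, which after averaging over $v$ uniform in $[n]$ is at best $O((\log n)/n)$. But the theorem asks for a \emph{multiplicative} $1+o(n^{-\alpha})$ error on a main term $(m/(m+1))^{\sum_i d_i}$ that can be as small as $n^{-kc\log((m+1)/m)}$; converting your additive error to a relative one therefore blows up polynomially in $n$ whenever $\sum_i d_i$ is of order $\log n$. The same issue contaminates your hypergeometric-to-product factorization: the per-step multiplicative correction $1+O(k^2/t)$, multiplied over $t$, does not stay within $1+o(n^{-\alpha})$. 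Your closing paragraph asserts that the constraint $c<m+1$ makes the leading factor ``dominate'' the corrections, but this is backwards---the leading factor is the \emph{small} quantity, and you need the corrections to be even smaller.

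The paper avoids this problem entirely by exploiting an exact structure of the coalescent that your decomposition $d_n(V_i)=\sum_t B_i(t)+R_i$ obscures. Writing the degree instead as the length of the initial winning streak among the steps at which the vertex's tree is selected (each selection being won independently with probability $m/(m+1)$), one gets the \emph{exact} conditional identity $\prob(d_n(v)\geq d_v,\,v\in[k]\mid \text{selection sets})\leq (m/(m+1))^{\sum_v d_v}$, with equality when the selection sets are pairwise disjoint on $\{I,\ldots,n\}$ and large enough. All error then comes from (i) the event that two of the $k$ marked trees are selected together before some step $I=\lceil n^\varepsilon\rceil$, and (ii) the event that some selection set restricted to $\{I,\ldots,n\}$ has fewer than $d_v$ elements; both events have probability $O(n^{-\varepsilon})+O(n^{-\beta})$ by direct estimates, and the constraint $c<m+1$ enters only to guarantee (ii) via a Bernstein bound on $|\mathcal S_n(v)\cap\{I,\ldots,n\}|$, whose mean is $(m+1)(1-\varepsilon)\log n+O(1)$. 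No Poisson or Le Cam approximation is needed.
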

Based on Theorem \ref{theorem:joint_degree_distribution}, we state three results that analyze the asymptotic number of vertices with certain large degrees, as well as the maximum degree. These results are proved in Section \ref{subsec:large_degree_vertices}, and are generalizations of the main results in \cite{Addario.Eslava.2018}, where they have been stated in terms of RRTs. For $n\in\N$ and $i\geq -\floor{\log_\frac{m+1}{m}n}$, we define the random variables
\begin{align*}
    &X_i^{(n)}\coloneq|\{v\in[n]\colon d_n(v)=\floor{\log_\frac{m+1}{m}n}+i\}|
    \intertext{and}
    &X_{\geq i}^{(n)}\coloneq|\{v\in[n]\colon d_n(v)\geq\floor{\log_\frac{m+1}{m}n}+i\}|=\sum_{k\geq i}X_i^{(n)}.
\end{align*}
The first result concerns distributional convergence of the vectors $(X_i^{(n)},i\in\Z)$ and is formulated as a statement about weak convergence of point processes. We prove convergence along certain subsequences, since the random variables $X_i^{(n)}$ do not converge in distribution as $n \to \infty$. This lack of convergence arises from a lattice effect caused by taking the floor of $\log_{\frac{m+1}{m}} n$ in the definition of $X_i^{(n)}$. Similarly, we prove convergence for $X_{\geq i}^{(n)}$ along subsequences, which enables us to obtain a result about the maximum degree. To prepare the theorem, we need a few definitions. Let $\Z^*\coloneq\Z\cup\{\infty\}$. We endow $\Z^*$ with the metric defined by
\begin{equation}
    d(i,j)=|2^{-j}-2^{-i}|,\quad d(i,\infty)=2^{-i},
\end{equation}
for $i,j\in\Z$. Define $\mathcal{M}^\#_{\Z^*}$ as the set of boundedly finite measures of $\Z^*$. Let $\mathcal{P}$ be an inhomogeneous Poisson point process on $\R$ with rate function
\begin{equation}
    \lambda(x)\coloneq\left(\frac{m}{m+1}\right)^x\log\left(\frac{m+1}{m}\right),
\end{equation}
with $x\in\R$. For each $\varepsilon\in[0,1]$, let $\mathcal{P}^\varepsilon$ be the point process on $\Z^*$ given by
\begin{equation}
    \mathcal{P}^\varepsilon \coloneq \sum_{x\in\mathcal{P}}\delta_{\floor{x+\varepsilon}},
\end{equation}
where $\delta$ is a Dirac measure. Similarly, for all $n\in\N$, let
\begin{equation}
     \mathcal{P}^{(n)} \coloneq \sum_{v\in[n]}\delta_{d_n(v)-\floor{\log_{\frac{m+1}{m}}n}}.
\end{equation}
Then, for each $i\in\Z$, we have
\begin{equation}
    \mathcal{P}^\varepsilon\{i\} \coloneq \mathcal{P}^\varepsilon(\{i\}) = |\{x\in\mathcal{P}\colon \floor{x+\varepsilon}=i\}|=|\{x\in\mathcal{P}\colon x\in[i-\varepsilon,i+1-\varepsilon)\}|
\end{equation}
and $\mathcal{P}^{(n)}\{i\} \coloneq \mathcal{P}^{(n)}(\{i\})=X_i^{(n)}$. Furthermore, for $i\in\Z$, we derive
\begin{equation}
    \label{eq:integral_poisson_parameters}
    \mathcal{P}^\varepsilon\{i\}\sim\text{Poi}\left(\frac{1}{m+1}\left(\frac{m}{m+1}\right)^{i-\varepsilon}\right).
\end{equation}
We have $\mathcal{P}^{(n)}, \mathcal{P}^\varepsilon \in \mathcal{M}^\#_{\Z^*}$ as, by definition, $\mathcal{P}^{(n)}\leq n$ almost surely and $\mathcal{P}^\varepsilon$ is boundedly finite almost surely. Namely, for any bounded $B\in\Z^*$ (including sets containing $\infty$), we can bound $\mathcal{P}^\varepsilon(B)$ from above by $\mathcal{P}(\overline{B}-\varepsilon)$, where $\overline{B}-\varepsilon$ denotes the closure of $B$ translated by $\varepsilon$. As $\mathcal{P}(\overline{B}-\varepsilon)$ follows a Poisson distribution with parameter $\int_{\overline{B}-\varepsilon}\lambda(x)\dd x<\infty$, it is almost surely finite. In $\Z^*$, the intervals $[i,\infty]$ are compact for $i\in\R$ so that we can prove convergence in distribution of $X^{(n_\ell)}_{\geq i}=\cP^{(n_\ell)}[i,\infty)$ for some suitable subsequence $n_\ell$ as $\ell\to\infty$. This is the reason for using the state space $\Z^*$. We then have the following result.
\begin{theorem}
    \label{theorem:poisson_convergence}
    Fix $\varepsilon\in[0,1]$. Let $(n_\ell)_{\ell\in\N}$ be an increasing sequence of integers that satisfies both $n_\ell\to\infty$ and $\varepsilon_{n_\ell}\to\varepsilon$ as $\ell\to\infty$. Then in $\mathcal{M}^\#_{\Z^*}$, $\mathcal{P}^{(n_\ell)}$ converges weakly to $\mathcal{P}^\varepsilon$ as $\ell\to\infty$. Equivalently, for any $i<i^\prime\in\Z$, jointly as $\ell\to\infty$,
    \begin{equation}
    \label{eq:poisson_convergence_by_continuity_sets_recovered_from_FDDs}
        (X_i^{(n_\ell)},\ldots,X_{i^\prime-1}^{(n_\ell)},X_{\geq i^\prime}^{(n_\ell)})\toindis(\mathcal{P}^\varepsilon\{i\},\ldots,\mathcal{P}^\varepsilon\{i^\prime -1\},\mathcal{P}^\varepsilon[i^\prime,\infty)).
    \end{equation}
\end{theorem}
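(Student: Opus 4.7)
The strategy is to prove the joint finite-dimensional convergence stated on the right-hand side of the theorem by the method of factorial moments. This finite-dimensional convergence is equivalent to the claimed weak convergence in $\cM^\#_{\Z^*}$: in the metric $d$, every singleton $\{i\}$ and every set $[i,\infty]\cap\Z^*$ with $i\in\Z$ is clopen and hence a continuity set of $\cP^\varepsilon$ (with $\{\infty\}$ a continuity set because $\cP^\varepsilon\{\infty\}=0$ almost surely), and this family determines the law of a boundedly finite point measure on $\Z^*$.

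Since $\cP^\varepsilon$ is obtained from the Poisson point process $\cP$ on $\R$ by counting its atoms within the pairwise disjoint intervals $[j-\varepsilon,j+1-\varepsilon)$ for $j<i'$ and $[i'-\varepsilon,\infty)$, the limit vector on the right-hand side has jointly independent Poisson coordinates with parameters $\lambda_j=(m+1)^{-1}(m/(m+1))^{j-\varepsilon}$ and $\lambda_{\geq i'}=(m/(m+1))^{i'-\varepsilon}$. Products of independent Poisson variables are moment-determined, so with the falling factorial $(X)_k\coloneq X(X-1)\cdots(X-k+1)$ it suffices to prove that, for all nonnegative integers $k_i,\ldots,k_{i'}$,
\begin{equation*}
    \E{\prod_{j=i}^{i'-1}(X_j^{(n_\ell)})_{k_j}\cdot(X_{\geq i'}^{(n_\ell)})_{k_{i'}}} \underset{\ell\to\infty}{\longrightarrow} \prod_{j=i}^{i'-1}\lambda_j^{k_j}\cdot\lambda_{\geq i'}^{k_{i'}}.
\end{equation*}

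To compute the left-hand side, I would write $X_j^{(n)}=X_{\geq d_n+j}^{(n)}-X_{\geq d_n+j+1}^{(n)}$, with $d_n\coloneq\floor{\log_{(m+1)/m}n}$, and expand the joint factorial moment by inclusion-exclusion into a finite signed combination of sums, over distinct tuples $v_1,\ldots,v_K\in[n]$, of probabilities $\P{d_n(v_s)\geq d_n+\tilde j_s,\,s\in[K]}$ with offsets $\tilde j_s$ bounded uniformly in $\ell$. Since $\log(1+1/m)>1/(m+1)$ yields $1/\log((m+1)/m)<m+1$, the thresholds $d_n+\tilde j_s$ stay bounded by $c\log n$ for some $c\in(0,m+1)$, so Theorem~\ref{theorem:joint_degree_distribution} applies uniformly and gives that each such sum equals $n^K\prod_{s=1}^K(m/(m+1))^{d_n+\tilde j_s}(1+o(1))$. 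The identity $(m/(m+1))^{d_n}=n^{-1}((m+1)/m)^{\varepsilon_n}$ then cancels the factor $n^K$, and $\varepsilon_{n_\ell}\to\varepsilon$ yields the pointwise limit $\prod_{s=1}^K((m+1)/m)^\varepsilon(m/(m+1))^{\tilde j_s}$.

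The main obstacle will be the combinatorial bookkeeping needed to reassemble these signed sums into the factored product $\prod_j\lambda_j^{k_j}\cdot\lambda_{\geq i'}^{k_{i'}}$. Algebraically this amounts to mirroring, on the empirical side, the telescoping identity $\lambda_{\geq j}-\lambda_{\geq j+1}=\lambda_j$ together with the factorial-moment identity $\E{(Y+Z)_k}=\sum_{a+b=k}\binom{k}{a}\E{(Y)_a}\E{(Z)_b}$ for independent Poisson $Y,Z$, both of which are promoted from joint degree probabilities via Theorem~\ref{theorem:joint_degree_distribution}. Once this combinatorial matching is carried out, the method of moments yields joint convergence in distribution, and the equivalence from the first paragraph concludes the proof.
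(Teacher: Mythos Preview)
Your proposal is correct and follows essentially the same route as the paper: the paper likewise reduces to the finite-dimensional convergence in \eqref{eq:poisson_convergence_by_continuity_sets_recovered_from_FDDs}, then proves the latter via the method of factorial moments, computing the mixed factorial moments of $(X_i^{(n_\ell)},\ldots,X_{i'-1}^{(n_\ell)},X_{\geq i'}^{(n_\ell)})$ by exactly the inclusion--exclusion reduction to Theorem~\ref{theorem:joint_degree_distribution} that you describe. The paper packages the factorial-moment computation as a separate proposition (its Proposition~\ref{prop:factorial_moments}, with the inclusion--exclusion as Lemma~\ref{lemma:inclusion_exclusion_degree}), and the ``combinatorial bookkeeping'' you flag is discharged there in one line via the binomial identity $\sum_{j=0}^{K'}\binom{K'}{j}(-m/(m+1))^j=(m+1)^{-K'}$; one small notational slip in your write-up is that, with the paper's definitions, $X_j^{(n)}=X_{\geq j}^{(n)}-X_{\geq j+1}^{(n)}$ (the shift by $d_n$ is already built into $X_{\geq j}^{(n)}$).
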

For the proof, it suffices to show \eqref{eq:poisson_convergence_by_continuity_sets_recovered_from_FDDs}, since by virtue of Theorem 11.1.VII of \cite{Daley.2008}, weak convergence of $\mathcal{P}^{(n_\ell)}$ follows from the convergence of the finite families of bounded stochastic continuity sets (see e.g.\ page 143 in \cite{Daley.2008}). However, these families of stochastic continuity sets can be represented using marginals of $(X_i^{(n_\ell)},\ldots,X_{i^\prime-1}^{(n_\ell)},X_{\geq i^\prime}^{(n_\ell)})$ for some $i<i^\prime\in\Z$. Using the information Theorem \ref{theorem:poisson_convergence} yields for $X_{\geq i}^{(n)}$, we obtain the following estimate on the maximum degree $\Delta_n$ in the RRDAG model, as $n\to\infty$.
\begin{theorem}
    \label{theorem:maximum_degree}
    For $c\in(0,m+1)$ and for any sequence $(i_n)_{n\in\N}$ with $i_n+\log_{\frac{m+1}{m}}n<c\log n$,
    \begin{equation}
        \prob(\Delta_n\geq \floor{\log_{\frac{m+1}{m}}n}+i_n) = \left(1-\exp\left(-\left(\frac{m}{m+1}\right)^{i_n-\varepsilon_n}\right)\right)(1+o(1)),
    \end{equation}
    as $n\to\infty$.
\end{theorem}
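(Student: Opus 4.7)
The plan is to translate the maximum-degree event into a statement about $X_{\geq i_n}^{(n)}$. Since the maximum degree exceeds a threshold $d$ exactly when at least one vertex has degree at least $d$, one has
\begin{equation*}
    \prob\bigl(\Delta_n\geq\floor{\log_{(m+1)/m}n}+i_n\bigr)=1-\prob\bigl(X_{\geq i_n}^{(n)}=0\bigr),
\end{equation*}
and the task reduces to showing that $X_{\geq i_n}^{(n)}$ is asymptotically Poisson with parameter $\lambda_n:=(m/(m+1))^{i_n-\varepsilon_n}$. The algebraic bridge between Theorem \ref{theorem:joint_degree_distribution} and the target parameter is the elementary identity $(m/(m+1))^{\floor{\log_{(m+1)/m}n}}=n^{-1}(m/(m+1))^{-\varepsilon_n}$, which converts the factor $(m/(m+1))^{kd}$ appearing in the joint degree estimate into $n^{-k}\lambda_n^k$.

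The first step is to compute the $k$-th factorial moment of $X_{\geq i_n}^{(n)}$ for each fixed $k\in\N$. Expanding
\begin{equation*}
    \expec\bigl[(X_{\geq i_n}^{(n)})_k\bigr]=\frac{n!}{(n-k)!}\,\prob\bigl(d_n(V_1)\geq d,\ldots,d_n(V_k)\geq d\bigr)
\end{equation*}
with $d=\floor{\log_{(m+1)/m}n}+i_n$ and $V_1,\ldots,V_k$ distinct uniform vertices of $[n]$, and applying Theorem \ref{theorem:joint_degree_distribution} (which is applicable because $d<c\log n$ by hypothesis), yields $\expec[(X_{\geq i_n}^{(n)})_k]=\lambda_n^k(1+o(1))$ for every fixed $k$.

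The remaining difficulty is that $\lambda_n$ is a drifting sequence rather than a fixed number, so the classical method of factorial moments does not apply off the shelf. My strategy is to argue along subsequences: any subsequence $(n_\ell)$ contains a further subsequence along which $\lambda_{n_\ell}$ converges in $[0,\infty]$, and the three regimes are treated separately. If $\lambda_{n_\ell}\to\lambda\in(0,\infty)$, the method of factorial moments gives $X_{\geq i_{n_\ell}}^{(n_\ell)}\toindis\text{Poi}(\lambda)$, so the left-hand side converges to $1-e^{-\lambda}$, which agrees with the right-hand side in the limit. If $\lambda_{n_\ell}\to 0$, the first-moment bound $\prob(X_{\geq i_{n_\ell}}^{(n_\ell)}\geq 1)\leq\expec[X_{\geq i_{n_\ell}}^{(n_\ell)}]=\lambda_{n_\ell}(1+o(1))$ combined with the second-order Bonferroni lower bound $\prob(X_{\geq i_{n_\ell}}^{(n_\ell)}\geq 1)\geq\expec[X_{\geq i_{n_\ell}}^{(n_\ell)}]-\tfrac{1}{2}\expec[(X_{\geq i_{n_\ell}}^{(n_\ell)})_2]$ shows that both sides are $\lambda_{n_\ell}(1+o(1))$. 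Finally, if $\lambda_{n_\ell}\to\infty$, the variance estimate $\Var(X_{\geq i_{n_\ell}}^{(n_\ell)})=\expec[(X_{\geq i_{n_\ell}}^{(n_\ell)})_2]+\expec[X_{\geq i_{n_\ell}}^{(n_\ell)}]-\expec[X_{\geq i_{n_\ell}}^{(n_\ell)}]^2=\lambda_{n_\ell}(1+o(1))+o(\lambda_{n_\ell}^2)$ combined with Chebyshev's inequality forces $\prob(X_{\geq i_{n_\ell}}^{(n_\ell)}=0)\to 0$, matching $1-e^{-\lambda_{n_\ell}}\to 1$.

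The main obstacle is precisely this non-stationarity of $\lambda_n$: Theorem \ref{theorem:poisson_convergence} handles only bounded perturbations of the floor, while here $i_n$ may drift freely to $\pm\infty$. To bypass a triple case analysis it would be cleaner to prove the quantitative bound
\begin{equation*}
    \bigl|\prob\bigl(X_{\geq i_n}^{(n)}=0\bigr)-e^{-\lambda_n}\bigr|=o\bigl(1\wedge\lambda_n\bigr),
\end{equation*}
directly via inclusion-exclusion on the event $\bigcap_{v\in[n]}\{d_n(v)<d\}$ and truncating the alternating sum at a level that grows slowly with $n$. Combined with the factorial-moment estimate above, this single bound would yield the multiplicative $(1+o(1))$ error across all three regimes of $\lambda_n$ uniformly and deliver the statement of the theorem.
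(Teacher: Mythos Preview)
Your proposal is correct and follows essentially the same route as the paper: rewrite the event in terms of $X_{\geq i_n}^{(n)}$, use the factorial-moment estimate coming from Theorem~\ref{theorem:joint_degree_distribution} (the paper packages this as Proposition~\ref{prop:factorial_moments}), and then split into regimes. The paper splits by the behaviour of $i_n$ (the case $i_n=O(1)$ handled via Theorem~\ref{theorem:poisson_convergence} and a Bolzano--Weierstrass subsequence argument, the case $i_n\to\infty$ via Markov and Paley--Zygmund), whereas you split by the behaviour of $\lambda_n$ and use method of moments, Bonferroni, and Chebyshev respectively; these are interchangeable second-moment arguments. One small point in your favour: your explicit treatment of the regime $\lambda_n\to\infty$ (equivalently $i_n\to-\infty$) covers a case the paper's two-case split does not address directly.
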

In the case $i_n=O(1)$, Theorem \ref{theorem:maximum_degree} follows immediately from Theorem \ref{theorem:poisson_convergence}, since $\{\Delta_n<\floor{\log n}+i\}=\{X_{\geq i}^n=0\}$. For $i_n\to\infty$, we instead use moment bounds of $X_{\geq i}^n$. The last of the three results establishes the asymptotic normality for $X_i^{(n)}$ when $i$ tends to $-\infty$ with respect to $n\in\N$ at a slow enough rate.
\begin{theorem}
    \label{theorem:asymptotic_normality_smaller_degrees}
    For any sequence $(i_n)_{n\in\N}$ with $i_n\to-\infty$ and $i_n=o(\log n)$, we have
    \begin{equation}
        \frac{X_{i_n}^{(n)}-\frac{1}{m+1}\left(\frac{m}{m+1}\right)^{i_n-\varepsilon_n}}{\sqrt{\frac{1}{m+1}\left(\frac{m}{m+1}\right)^{i_n-\varepsilon_n}}}\toindis\mathcal{N}(0,1),
    \end{equation}
     as $n\to\infty$.
\end{theorem}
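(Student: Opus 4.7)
The plan is a method-of-moments argument based on the observation that when $i_n\to-\infty$, the parameter $\lambda_n:=\frac{1}{m+1}\bigl(\frac{m}{m+1}\bigr)^{i_n-\varepsilon_n}$ diverges, so $X_{i_n}^{(n)}$ should behave like a Poisson random variable with a diverging mean, which itself satisfies the classical central limit theorem. Setting $d_n^\ast:=\floor{\log_{\frac{m+1}{m}}n}+i_n$, one has $\bigl(\frac{m}{m+1}\bigr)^{d_n^\ast}=n^{-1}\bigl(\frac{m}{m+1}\bigr)^{i_n-\varepsilon_n}$, and a first check is that $d_n^\ast<c\log n$ for some $c\in(0,m+1)$. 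This follows from $i_n=o(\log n)$ together with the elementary inequality $\log(1+1/m)>1/(m+1)$ (verified for $m\geq 2$ by $\log(1+x)\geq x-x^2/2$ and for $m=1$ directly from $\log 2>1/2$), so Theorem \ref{theorem:joint_degree_distribution} is applicable at level $d_n^\ast$.

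The core step is the computation of the $k$-th falling-factorial moment of $X_{i_n}^{(n)}$ for each fixed $k\in\mathbb{N}$. By exchangeability,
\begin{equation*}
\mathbb{E}\bigl[(X_{i_n}^{(n)})_k\bigr]=(n)_k\,\mathbb{P}\bigl(d_n(V_j)=d_n^\ast,\,j\in[k]\bigr),
\end{equation*}
for distinct uniform vertices $V_1,\dots,V_k$. Applying inclusion--exclusion to the identity $\{d_n(V_j)=d_n^\ast\}=\{d_n(V_j)\geq d_n^\ast\}\setminus\{d_n(V_j)\geq d_n^\ast+1\}$ and invoking Theorem \ref{theorem:joint_degree_distribution} on each of the $2^k$ resulting joint tail probabilities gives, through the binomial identity $\sum_{S\subseteq[k]}(-1)^{|S|}\bigl(\frac{m}{m+1}\bigr)^{|S|}=\bigl(\frac{1}{m+1}\bigr)^k$,
\begin{equation*}
\mathbb{P}\bigl(d_n(V_j)=d_n^\ast,\,j\in[k]\bigr)=\Bigl(\tfrac{m}{m+1}\Bigr)^{kd_n^\ast}\Bigl(\tfrac{1}{m+1}\Bigr)^{k}\bigl(1+o(n^{-\alpha})\bigr).
\end{equation*}
Combining with $(n)_k=n^k(1+O(1/n))$ yields $\mathbb{E}[(X_{i_n}^{(n)})_k]=\lambda_n^k(1+o(1))$, so the asymptotic factorial moments of $X_{i_n}^{(n)}$ coincide with the exact factorial moments of a $\mathrm{Poi}(\lambda_n)$ random variable.

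To conclude, I pass from factorial to standardized moments. Using the Stirling identity $x^r=\sum_{k=1}^r S(r,k)(x)_k$ and expanding $(X_{i_n}^{(n)}-\lambda_n)^r$ by the binomial theorem, the centered moment $\mathbb{E}[(X_{i_n}^{(n)}-\lambda_n)^r]$ becomes a fixed linear combination of the factorial moments of $X_{i_n}^{(n)}$ weighted by powers of $\lambda_n$, with exactly the same combinatorial coefficients as appear in the analogous expansion for a $\mathrm{Poi}(\lambda_n)$ variable. Since the factorial moments agree to leading order, the centered moments agree as well, and the classical Poisson-CLT asymptotic $\lambda_n^{-r/2}\mathbb{E}[(Y-\lambda_n)^r]\to\mathbb{E}[Z^r]$ with $Y\sim\mathrm{Poi}(\lambda_n)$, $Z\sim\mathcal{N}(0,1)$ and $\lambda_n\to\infty$ transfers to $X_{i_n}^{(n)}$. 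As the moments of $\mathcal{N}(0,1)$ satisfy Carleman's condition, convergence of all standardized moments implies convergence in distribution, yielding the theorem.

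The hard part is controlling how the error $o(n^{-\alpha})$ from Theorem \ref{theorem:joint_degree_distribution} propagates through the Stirling-type conversion while $\lambda_n$ itself diverges. The hypothesis $i_n=o(\log n)$ is essential here: it forces $\lambda_n=n^{o(1)}$, so that for any fixed $k,r$ the factors $\lambda_n^k$ appearing in the expansion remain much smaller than $n^{\alpha}$ and the relative errors therefore stay $o(1)$ after the combinatorial cancellations. The bookkeeping that matches the centered moments of $X_{i_n}^{(n)}$ and $\mathrm{Poi}(\lambda_n)$ term by term, while keeping uniform control over these errors, is the technical heart of the proof.
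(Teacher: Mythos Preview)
Your proposal is correct and follows essentially the same route as the paper: compute the $k$-th factorial moment of $X_{i_n}^{(n)}$ via inclusion--exclusion and Theorem~\ref{theorem:joint_degree_distribution} to obtain $\mathbb{E}[(X_{i_n}^{(n)})_k]=\lambda_n^k(1+o(n^{-\alpha}))$, then use $i_n=o(\log n)\Rightarrow\lambda_n=n^{o(1)}$ so that the $o(n^{-\alpha})$ error survives the conversion from factorial to standardized moments. The only difference is packaging: the paper quotes its Proposition~\ref{prop:factorial_moments} for the factorial moments and invokes Theorem~1.24 of Bollob\'as (2001) as a black-box criterion (factorial moments close enough to $\lambda_n^k$ with $\lambda_n\to\infty$ $\Rightarrow$ CLT), whereas you unroll both steps by hand via the Stirling-number expansion and comparison with $\mathrm{Poi}(\lambda_n)$.
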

The convergence in Theorem \ref{theorem:asymptotic_normality_smaller_degrees} is proved using a version of the method of moments for factorial moments. This theorem complements Theorem \ref{theorem:poisson_convergence}, as it yields a central limit theorem for the number of vertices attaining a certain degree significantly lower than the maximum degree. In contrast, Theorem \ref{theorem:poisson_convergence} shows that the number of vertices attaining a certain degree close to the maximum degree is asymptotically Poisson.
\par

Next, we turn to a result on the joint asymptotic behavior of ungreedy depth and label of a uniformly random vertex $V\in[n]$, given that $V$ has a large degree, as $n\to\infty$. This result, which we formally prove in Section \ref{subsec:label_ungreedy_depth_theorem}, generalizes a theorem from \cite{Lodewijks.2023}, where it was formulated for RRTs. Since we consider only one vertex, we write $d_n\coloneq d_n(V),\ell_n\coloneq \ell_n(V)$, and $u_n\coloneq u_n(V)$.
\begin{theorem}
    \label{theorem:ungreedy_depth_single_vertex}
    Fix $a\in[0,m+1)$. Let $V$ be a vertex selected uniformly at random from $[n]$. Let $d\in\N_0$ diverge as $n\to\infty$ such that $\lim_{n\to\infty}d/\log n=a$. Let $M,N$ be two independent standard normal random variables. Then, conditionally on the event $\{d_n\geq d\}$,
    \begin{equation}
        \label{eq:joint_behavior_ungreedy_label_conditional_degree_version_in_theorem}
        \begin{split}
            &\left(\frac{u_n-(m\log n-md/(m+1))}{\sqrt{m\log n-md/(m+1)^2}},\frac{\log(\ell_n)-(\log n-d/(m+1))}{\sqrt{d/(m+1)^2}}\right)\\
            &\overset{d}{\longrightarrow}\left(M\sqrt{\frac{ma}{(m+1)^2-a}}+N\sqrt{1-\frac{ma}{(m+1)^2-a}},M\right),
        \end{split}
    \end{equation}
    as $n\to\infty$.
\end{theorem}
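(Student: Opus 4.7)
The plan is to decompose the target quantities into two pieces that are asymptotically independent under the Kingman coalescent representation for RRDAGs introduced earlier in the thesis: the fluctuation of the label $\ell_n$ given the large-degree event, and the fluctuation of the ungreedy depth $u_n$ given the label. Write
\begin{equation*}
u_n - \bigl(m\log n - md/(m+1)\bigr) = m\bigl[\log\ell_n - (\log n - d/(m+1))\bigr] + \bigl(u_n - m\log\ell_n\bigr).
\end{equation*}
The first summand is a function of $V$ alone, and the event $\{d_n(V)\ge d\}$ is measurable with respect to the ``graph above $V$'' (vertices with labels larger than $\ell_n(V)$ and their attachment choices). The second summand depends only on the ``graph below $V$'' (the parents-of-parents chain following the ungreedy rule). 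In the recursive/coalescent construction these two pieces of randomness are conditionally independent given the label $\ell_n$, so I just need marginal limit theorems for each and can then combine.

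For Step 1 (label given large degree), I use that the degree of the vertex with label $\ell$ is a sum of independent $\mathrm{Bernoulli}(m/(v-1))$ indicators over $v\in\{\ell+1,\dots,n\}$, so it is well approximated by $\Poi(\mu(\ell))$ with $\mu(\ell) = m\log(n/\ell)+O(1/\ell)$. Bayes applied to $V$ uniform on $[n]$ gives $\Pv(\ell_n(V)=\ell \mid d_n(V)\ge d) \propto \Pv(\Poi(\mu(\ell))\ge d)$. Writing $\mu=sd$ and using Stirling on the leading Poisson tail term, the rate function $s\mapsto s(1+1/m)-1-\log s$ is maximized at $s^\ast=m/(m+1)$ with second derivative $(m+1)^2/m^2$. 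This saddle point corresponds to $\log\ell=\log n - d/(m+1)$, and a change of variables from $s$ to $\log\ell$ yields the Gaussian fluctuation
\begin{equation*}
\frac{\log(\ell_n)-(\log n - d/(m+1))}{\sqrt{d/(m+1)^2}} \;\overset{d}{\longrightarrow}\; M\sim\mathcal{N}(0,1).
\end{equation*}

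For Step 2 (ungreedy depth given label), the recursive definition of $G_n^{(m)}$ shows that the labels $(\Lambda_j)_{j\ge0}$ traversed along the ungreedy path form a decreasing Markov chain with $\Lambda_0=\ell_n$ and $\Lambda_{j+1}$ distributed as the maximum of $m$ uniform samples without replacement from $[\Lambda_j-1]$, with increments independent across $j$. For $\Lambda_j$ large, $-\log(\Lambda_{j+1}/\Lambda_j)$ is asymptotically $\mathrm{Exp}(m)$ (since for a continuous uniform $U$ on $[0,1]$, $-\log(\max$ of $m$ i.i.d. $U)\sim\mathrm{Exp}(m)$), so $u_n$ is the first-passage time of a renewal process with mean increment $1/m$ and variance $1/m^2$ across level $\log\ell_n$. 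The renewal central limit theorem gives $(u_n - m\log\ell_n)/\sqrt{m\log\ell_n}\overset{d}{\to}N\sim\mathcal{N}(0,1)$, independent of the label fluctuation by the decoupling above. Plugging $\log\ell_n\approx\log n-d/(m+1)$ into $\sqrt{m\log\ell_n}$ and dividing the decomposition by $\sqrt{m\log n - md/(m+1)^2}$ recovers the coefficients $\sqrt{ma/((m+1)^2-a)}$ and $\sqrt{1-ma/((m+1)^2-a)}$ of $M$ and $N$ displayed in \eqref{eq:joint_behavior_ungreedy_label_conditional_degree_version_in_theorem}; indeed these coefficients satisfy $\alpha^2+\beta^2=1$, consistent with the variance bookkeeping $m\log n - md/(m+1)^2 = m^2d/(m+1)^2 + m(\log n - d/(m+1))$.

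The main obstacle is the saddle-point analysis of Step 1: I need the asymptotics of $\Pv(\Poi(\mu(\ell))\ge d)$ uniformly over $\ell$ in a $\sqrt{d}$-window around $ne^{-d/(m+1)}$ and uniformly in $d$ with $d/\log n\to a$ for any $a\in[0,m+1)$, in a form strong enough to yield a local Gaussian limit (not merely a ratio at the mode). Handling the boundary regime $a$ close to $m+1$ requires care since the saddle approaches $s^\ast=m/(m+1)$ from within the large-deviation region. A secondary difficulty is the uniform renewal CLT for the ungreedy chain $(\Lambda_j)$ when started from $\ell$ in the relevant window, where the $\mathrm{Exp}(m)$ approximation to the increments is not exact and a small perturbation must be controlled; however, this reduces to classical renewal-theoretic bounds once the chain is coupled to an i.i.d. $\mathrm{Exp}(m)$ sequence. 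Otherwise the argument is an assembly of Theorem \ref{theorem:joint_degree_distribution}-style estimates, Poisson approximation for sums of independent Bernoullis, and the renewal CLT, glued together by the conditional-independence structure furnished by the coalescent construction.
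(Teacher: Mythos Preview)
Your approach is correct and takes a genuinely different route from the paper. The paper works entirely inside the Kingman coalescent: it replaces $u_n$ by a truncated count $\widetilde{u}_n$ (Lemma~\ref{lemma:ungreedy_depth_only_one_vertex_connections_set}), introduces two auxiliary selection processes $\cS_n^-,\cS_n^+$ to separate the before-loss and after-loss phases of vertex~$1$, partitions the label range into slices via events $\cE_j$, and runs Lindeberg--Feller on each slice (Proposition~\ref{prop:calculation_results_general_central_limit_theorem_ungreedy_depth}), obtaining matching upper and lower bounds. You instead stay in the recursive construction and exploit a structural fact the paper never isolates: conditionally on $V=\ell$, the degree $d_n(V)$ (determined by attachment choices of vertices $>\ell$) and the ungreedy depth $u_n(V)$ (determined by attachment choices of vertices $\le\ell$) are independent. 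This cleanly factors the problem into a Bayes/Laplace analysis for the posterior of $\ell_n$ and a renewal-type CLT for the Markov chain $(\Lambda_j)$ of labels along the ungreedy path.

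Your route makes the mixture form $(\alpha M+\beta N,M)$ of the limit transparent and is arguably shorter once the two ingredients are in hand. The paper's route has the compensating advantage that every limiting step is an ordinary Lindeberg--Feller CLT for sums of independent Bernoullis---no Laplace method, no renewal theory---and its coalescent decoupling machinery is reused directly for the multi-vertex Theorem~\ref{theorem:label_multiple_vertices_conditional_degrees}. Two small remarks on your write-up: in Step~1 the Poisson approximation is unnecessary, since a moderate-deviation estimate applied directly to the Bernoulli sum $d_n(\ell)=\sum_{v>\ell}\mathrm{Ber}(m/(v-1))$ yields the same rate function and absorbs the $O(1)$ gap between Poisson and Bernoulli log-tails; and your phrase ``increments independent across $j$'' is not quite right---$(\Lambda_j)$ is Markov with state-dependent transitions, not an i.i.d.\ increment process---though you correctly flag that coupling to i.i.d.\ $\mathrm{Exp}(m)$ steps is the cure. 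The boundary worry you raise for $a$ near $m+1$ is mild: the mode $\log\ell^\ast=(1-a/(m+1))\log n$ still diverges, and the crossover point $\mu(\ell)=d$ sits $\sqrt{d}/m$ posterior standard deviations below the mode, so the upper-tail regime governs uniformly.
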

Our last result is a partial generalization of Theorem \ref{theorem:ungreedy_depth_single_vertex} for multiple uniform vertices instead of a single uniform vertex, where we omit the ungreedy depth. We provide the proof in Section \ref{subsec:labels_multiple_vertices}. In a version for RRTs, this theorem has already been proved in \cite{Lodewijks.2023}.
\begin{theorem}
    \label{theorem:label_multiple_vertices_conditional_degrees}
    Fix $k\in\N$. Let $V_1,\ldots,V_k$ be distinct vertices selected uniformly at random from $[n]$. Let $(d_i)_{i\in[k]}$ be $k$ integer-valued sequences diverging to infinity such that, for all $i\in[k]$, $\limsup_{n\to\infty}d_i/\log n<m+1$, and let $(M_i)_{i\in[k]}$ be $k$ independent standard normal random variables. Then, conditionally on the event $\{d_n(V_i)\geq d_i, i\in[k]\}$, we have
    \begin{equation}
    \label{eq:label_multiple_vertices_theorem_statement}
        \bigg(\frac{\log (\ell_n(V_i))-(\log n-d_i/(m+1))}{\sqrt{d_i/(m+1)^2}}\bigg)_{i\in[k]}\overset{d}{\longrightarrow} (M_i)_{i\in[k]},
    \end{equation}
    as $n\to\infty$.
\end{theorem}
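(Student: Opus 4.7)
The plan is to generalize the single-vertex argument underlying Theorem \ref{theorem:ungreedy_depth_single_vertex} to $k$ uniformly chosen vertices by exploiting the asymptotic independence of distinct uniform vertices in the generalized Kingman coalescent construction for RRDAGs. The single-vertex result already identifies the correct marginal limit $\mathcal{N}(0,1)$ for each rescaled $\log \ell_n(V_i)$, so the core task is to show that the joint limit in \eqref{eq:label_multiple_vertices_theorem_statement} factorizes into $k$ independent copies.

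As a first step, I would express the conditional law on the left-hand side of \eqref{eq:label_multiple_vertices_theorem_statement} through the ratio
\begin{equation*}
\frac{\mathbb{P}\!\left(\bigcap_{i\in[k]}\left\{d_n(V_i)\geq d_i,\ \log \ell_n(V_i)\leq \log n-\tfrac{d_i}{m+1}+\tfrac{x_i\sqrt{d_i}}{m+1}\right\}\right)}{\mathbb{P}\!\left(\bigcap_{i\in[k]}\{d_n(V_i)\geq d_i\}\right)},
\end{equation*}
for fixed $(x_i)_{i\in[k]}\in\mathbb{R}^k$. By Theorem \ref{theorem:joint_degree_distribution}, the denominator equals $\left(\tfrac{m}{m+1}\right)^{\sum_i d_i}(1+o(n^{-\alpha}))$ and factorizes as a product of marginal geometric tails. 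The heart of the proof is therefore to show that the numerator factorizes analogously up to a $1+o(1)$ error, since each factor then matches the single-vertex CDF identified by Theorem \ref{theorem:ungreedy_depth_single_vertex}.

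To achieve this factorization, I would invoke the generalized Kingman coalescent, in which the degree and label of each vertex can be read off from its reverse-time trajectory. For distinct uniform vertices $V_1,\ldots,V_k$, these trajectories only ``interact'' in rare events---for instance, one vertex being selected as a parent of another, or two being chosen simultaneously at a reverse coalescent step---each contributing probability of order $O(1/n)$. Since the joint event of interest involves at most $O(\log n)$ relevant reverse steps per vertex, a union bound over possible interactions gives a total contribution of $O(k^2(\log n)^2/n)=o(1)$. On the complementary no-interaction event, the trajectories of $V_1,\ldots,V_k$ can be coupled to $k$ independent copies of the single-vertex coalescent, yielding the desired factorization; the label marginal from Theorem \ref{theorem:ungreedy_depth_single_vertex} is then applied to each factor, producing the $k$-fold product of standard normals.

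The main obstacle will be carrying out this no-interaction decoupling rigorously while retaining uniform control over the error terms. In particular, the $O(k^2(\log n)^2/n)$ interaction error and the $O(n^{-\alpha})$ error from Theorem \ref{theorem:joint_degree_distribution} must remain negligible after division by the decaying denominator, uniformly over the allowed range $\limsup_{n\to\infty}d_i/\log n<m+1$ for each $i\in[k]$. A secondary subtlety is that the coupling must preserve label information at the precise scale $\sqrt{d_i}/(m+1)$ appearing in the scaling, which requires revisiting the proof of Theorem \ref{theorem:ungreedy_depth_single_vertex} to confirm that its single-vertex label analysis transfers compatibly to the multi-vertex coupling built from independent coalescent trajectories.
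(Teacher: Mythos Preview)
Your overall strategy---reduce to a product of single-vertex factors via the coalescent, then invoke the label part of Theorem~\ref{theorem:ungreedy_depth_single_vertex} on each factor and Theorem~\ref{theorem:joint_degree_distribution} on the denominator---is exactly the paper's strategy. However, the decoupling step as you have sketched it has a genuine gap.

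First, the interaction estimate is not right. At step $i$ of the coalescent the probability that two fixed vertices are selected together is of order $i^{-2}$, not $n^{-1}$; summing over $i$ from some threshold $t$ to $n$ gives a bound of order $t^{-1}$, so the trajectories \emph{always} interact eventually and the relevant quantity is the first step $\tau_k$ at which this happens. The paper's Proposition~\ref{prop:tightness_tau_k} gives $\prob(\tau_k\geq t_n)=O(t_n^{-1})$, and the truncation is taken at $t_n=\lceil(\log n)^2\rceil$, not at a level producing $(\log n)^2/n$.

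Second, and more seriously, even with the correct bound $\prob(\text{early interaction})=o(1)$, an additive error of this size does \emph{not} survive division by the denominator $\prob(d_n(V_i)\geq d_i,\,i\in[k])=(m/(m+1))^{\sum_i d_i}(1+o(1))$, which is polynomially small in $n$. You flag this as the ``main obstacle'' but do not say how to overcome it. The paper's mechanism is to condition first on the truncated selection sets $\overline{\cS}_{n,1}$. Conditionally on \emph{any} realization $\overline{J}$ of these sets, the event $\{d_n(v)\geq d_v,\,v\in[k]\}$ depends only on die-roll outcomes and has probability at most $(m/(m+1))^{\sum_v d_v}$ (cf.~\eqref{eq:degree_upper_bound_conditional_DA}). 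Hence the contribution of the ``bad'' event $\{\overline{\cS}_{n,1}\notin\cB_{n,\delta}\}$ to the numerator is already bounded by $(m/(m+1))^{\sum_v d_v}\cdot\prob(\overline{\cS}_{n,1}\notin\cB_{n,\delta})=o\bigl((m/(m+1))^{\sum_v d_v}\bigr)$, which is the right order. On the ``good'' event the selection sets are pairwise disjoint, Lemma~\ref{lemma:decouple_label_degree_multiple_vertices} gives exact conditional factorization, and Lemma~\ref{lemma:replace_selection_sets_by_independent_copies} lets one replace $\overline{\cS}_{n,1}$ by independent copies $\overline{\cR}_{n,1}$. The point is that the geometric factor is extracted \emph{before} one throws away the interaction event, not after; without this order of operations your ratio argument collapses.
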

To obtain Theorem \ref{theorem:label_multiple_vertices_conditional_degrees}, we need to deal with the correlations that arise due to dependencies of the labels and degrees of different vertices, to then apply Theorem \ref{theorem:ungreedy_depth_single_vertex} to each of the vertices separately.\par

\newpage

\section{Basic notions}

This chapter is organized as follows: In Section \ref{subsec:random_directed_acyclic_graph}, we catch up on the outstanding proof of the distribution of an RRDAG. Next, we formally introduce our version of the Kingman coalescent construction and compare it to the law of an RRDAG in Section \ref{chap:kingman_coalescent}. Finally, in Section \ref{subsec:selection_and_connection_sets}, we explain important notations related to the coalescent that we use throughout the proofs of the theorems and derive, in particular, representations for the degree, the label, and the ungreedy depth of vertices in terms of Kingman's coalescent.

\subsection{Random directed acyclic graph model}
\label{subsec:random_directed_acyclic_graph}

Fix $m\in\N$, let $n\in\N$, and recall the set of graphs $\CI_n^{(m)}$ from \eqref{eq:outdeg_condition_for_dag}. It is clear that the RRDAG $G_n^{(m)}$ from Definition \ref{def:dag} is an element of $\CI_n^{(m)}$. Our goal now is to show that $G_n^{(m)}$ is indeed a uniformly random element of $\CI_n^{(m)}$, as we claimed earlier.
\begin{proposition}
    \label{prop:cardinality_increasing_dags}
    For $m,n\in\N$, we have $|\CI_n^{(m)}| = \prod_{i=m+1}^{n-1} {i \choose m}$, where the empty product is defined as 1, and an RRDAG yields a uniformly random element of $\CI_n^{(m)}$.
    \proof
    Let $m, n \in \N$. Since we are considering graphs with vertex set $[n]$ and directed edges, an element of $\CI_n^{(m)}$ is fully determined by specifying for each vertex its neighbors with smaller labels. If we consider a vertex $v \in [n]$, we know $v$ has $m\land (v-1)$ outgoing edges due to condition \eqref{eq:outdeg_condition_for_dag}. As the labels must decrease along directed paths, these edges can only lead to the $v-1$ vertices with smaller labels. Therefore, we end up with ${v-1 \choose m\land (v-1)}$ connection possibilities for vertex $v$ and with
    \begin{equation*}
        \prod_{v=1}^{n} {v-1 \choose m\land (v-1)} = \prod_{i=m+1}^{n-1} {i \choose m}    
    \end{equation*}
    possibilities for graphs in $\CI_n^{(m)}$.\par
    Now, fix an element $G \in \CI_n^{(m)}$. We need to prove that
    \begin{equation*}
        \prob(G_n^{(m)}=G)=\frac{1}{\prod_{i=m+1}^{n-1} {i \choose m}}. 
    \end{equation*}
    The out-neighbors of $v$ in $G_n^{(m)}$ equal those in $G$ with probability ${v-1 \choose m\land (v-1)}^{-1}$ due to construction. Since the outgoing edges of distinct vertices in the RRDAG are independent of each other, we obtain the desired result and conclude the proof. \qed
\end{proposition}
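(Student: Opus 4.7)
The plan is to prove the two claims (cardinality and uniformity) together by exploiting that a graph in $\CI_n^{(m)}$ is completely determined, vertex by vertex, by the set of its out-neighbors, and that the construction in Definition \ref{def:dag} fixes these sets independently across vertices.

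First, I would observe that any $G \in \CI_n^{(m)}$ is determined by specifying, for each vertex $v \in [n]$, its list of out-neighbors. Because $G$ is increasing, the out-neighbors of $v$ must carry labels in $[v-1]$, and by the constraint \eqref{eq:outdeg_condition_for_dag} there must be exactly $m \wedge (v-1)$ of them. Since these are arbitrary subsets of $[v-1]$ of the required size, and the choices for different vertices are independent, the total count is
\begin{equation*}
|\CI_n^{(m)}| = \prod_{v=1}^{n} \binom{v-1}{m \wedge (v-1)}.
\end{equation*}
For $v \leq m+1$ the binomial coefficient equals $1$, and for $v \geq m+2$ it equals $\binom{v-1}{m}$; reindexing via $i = v-1$ gives the claimed product $\prod_{i=m+1}^{n-1}\binom{i}{m}$.

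Second, for uniformity, I would unwind the recursive construction of $G_n^{(m)}$. The step adding vertex $v+1$ chooses its out-neighbors uniformly from $[v]$ among all subsets of size $m \wedge v$, independently of all prior steps. Hence for any fixed $G \in \CI_n^{(m)}$, the event that $G_n^{(m)} = G$ is the intersection over $v$ of the events ``the out-neighbor set of $v$ in $G_n^{(m)}$ equals the out-neighbor set of $v$ in $G$'', each of which has probability $\binom{v-1}{m \wedge (v-1)}^{-1}$ and which are jointly independent. Multiplying, $\prob(G_n^{(m)} = G) = 1/|\CI_n^{(m)}|$, which is the desired uniformity.

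There is no real obstacle here; the argument is almost purely bookkeeping. The only subtlety worth flagging in the write-up is the handling of the first $m+1$ vertices, where the out-degree $m \wedge (v-1)$ is strictly less than $m$, so that each of those vertices has a unique admissible out-neighborhood and contributes a factor of $1$ both to the count and to the probability. Once that boundary case is treated cleanly, the reindexing of the product and the independence argument complete the proof.
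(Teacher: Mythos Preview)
Your proposal is correct and follows essentially the same approach as the paper: both determine $|\CI_n^{(m)}|$ by counting, vertex by vertex, the admissible out-neighbor sets and then use the independence of the attachment choices in Definition~\ref{def:dag} to obtain uniformity. Your write-up is slightly more explicit about the reindexing and the boundary case $v\leq m+1$, but the argument is otherwise identical.
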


\subsection{Kingman's coalescent}
\label{chap:kingman_coalescent}

The recursive, bottom-up construction in Definition \ref{def:dag} can already be used to prove certain properties of RRDAGs, but there exists another useful construction method in a top-down manner. This construction relies on a version of the so-called Kingman coalescent. While Kingman's coalescent has already successfully been used for proving properties of RRTs, the use for RRDAGs is something new. The advantage lies in the fact that vertices in the coalescent construction are interchangeable in terms of characteristics like the degree, label, and ungreedy depth. In this section, we introduce our version of Kingman's coalescent for RRDAGs and orient ourselves on the procedure for RRTs in \cite{Addario.Eslava.2018}.\par

For $n \in \N$, let $\CF_n$ be the set of directed acyclic graphs with vertex set $[n]$. For convenience, we redefine the usual terminology of roots and trees to fit the directed acyclic graph setting.
\begin{definition}[Roots and trees in directed acyclic graphs]
    \ \\
    Let $G \in \CF_n$ with $n \in \N$. A vertex $v \in G$ is a $\textbf{root}$ if $\text{outdeg}(v)=0$. A root, together with all the vertices connected to it via directed paths in $G$, forms a $\textbf{tree}$.
\end{definition}
Note that this definition is actually different from the usual notion of a tree. For an example, see Figure \ref{fig:trees_roots}.\par
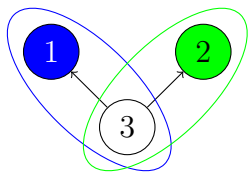
\begin{figure}
    \centering
    \begin{tikzpicture}
        \node[draw, circle, fill=blue, text=white] (v1) at (-1,1) {1}; 
        \node[draw, circle, fill=green] (v2) at (1,1) {2};     
        \node[draw, circle] (v3) at (0,0) {3};     
    
        \draw[->] (v3) -- (v1); 
        \draw[->] (v3) -- (v2); 

        \draw[blue, rotate around={45:(-0.5,0.5)}]
        (-0.5,0.5) ellipse (0.6cm and 1.4cm);
        \draw[green, rotate around={-45:(0.5,0.5)}]
        (0.5,0.5) ellipse (0.6cm and 1.4cm);
    \end{tikzpicture}
    \caption{An example of roots and trees in the directed acyclic graph $V = [3]$, $E = \{(3,1), (3,2)\}$. Vertices 1 and $2$ are roots and the respectively colored ellipses mark the corresponding trees.}
    \label{fig:trees_roots}
\end{figure}
For all $n, m\in\N$, we call a sequence $(f_n,\ldots,f_1) \in \CF_n^n$ an $\textbf{(m,n)-chain}$ if $f_n = ([n],\emptyset)$ is the graph consisting of $n$ unconnected vertices and, for $n \geq i > 1$, $f_{i-1}$ is obtained from $f_i$ by adding directed edges from a root to $m \land (i-1)$ other roots. By $\CJ_n^{(m)}$, we denote the set of $(m,n)$-chains in $\CF_n^n$. If $(f_n,\ldots,f_1)$ is an $(m,n)$-chain, then the graph $f_i$ contains $i$ trees for $i \in [n]$, which we list in lexicographic order of their increasingly ordered vertices as $t_1^{(i)},\ldots,t_i^{(i)}$. For such a tree $t$, we denote the root of the tree by $r(t)$.\par
Now, we are ready to formally state the definition of Kingman's coalescent for RRDAGs.
\begin{definition}
    \label{def:coalescent}
    Let $m,n \in \N$. \textbf{Kingman's (m,n)-coalescent} is the random $(m,n)$-chain $(F_n,\ldots,F_1)$ constructed as follows. Independently, for each $i \in \{2,\ldots,n\}$, let $(a_{i,1},\ldots,a_{i,(m+1) \land i})$ be a tuple chosen from $\{(a_1,\ldots,a_{(m+1) \land i})\colon 1 \leq a_1 <\ldots<a_{(m+1) \land i} \leq i\}$ uniformly at random and let $\xi_i \sim \text{Unif}(\{1,\ldots,(m+1) \land i\})$ be independent of $(a_{i,1},\ldots,a_{i,(m+1) \land i})$.\\
    We initialize $F_n$ as the graph on $[n]$ without edges. For $n \geq i \geq 2$, construct $F_{i-1}$ from $F_i$ by adding directed edges from $r(t_{a_{i,\xi_i}}^{(i)})$ to $r(t_{a_{i,j}}^{(i)})$ for all $j \in \{1,\ldots,(m+1) \land i\} \setminus \{\xi_i\}$.
\end{definition}
See an example of Kingman's $(m,n)$-coalescent for $m=2$ and $n=5$ in Figure \ref{fig:coalescent_process}.\par
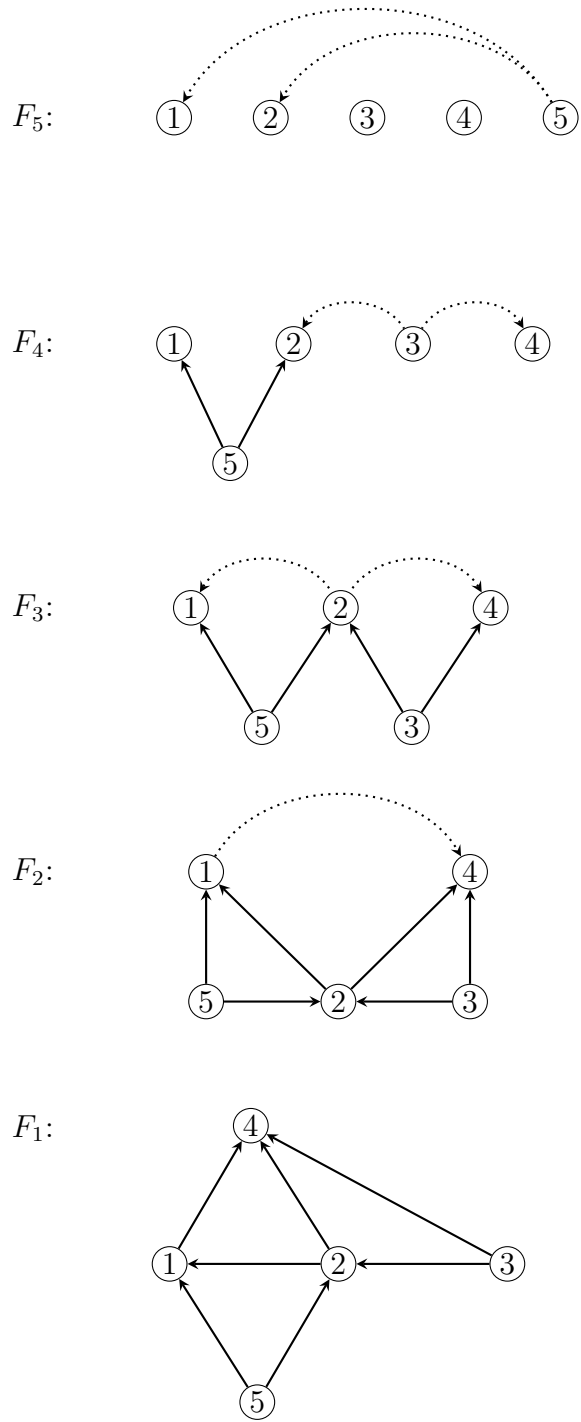
\begin{figure}
\begin{center}
\begin{tikzpicture}
	[vertex/.style={minimum size=13pt, inner sep=0pt,circle,draw},
	up/.style={->,>=stealth, draw,thick},
	down/.style={<-,>=stealth, draw,thick},
	upx/.style={->,>=stealth, draw,thick, dotted},
	downx/.style={<-,>=stealth, draw,thick,dotted},
	label/.style={minimum size=13pt, inner sep=0pt,circle},
	]
	{[yshift=0cm]
		\node[vertex] (12) at (-.6,.5) {3};
		\node[vertex] (0) [left=.8cm of 12] {2};
		\node[vertex] (11) [left=.8cm of 0] {1};
		\node[vertex] (21) [right=.8cm of 12] {4};
		\node[vertex] (13) [right=.8cm of 21] {5}
		edge [upx, in=60, out=120] node[auto,swap] {} (0)
        edge [upx, in=60, out=120] node[auto,swap] {} (11);
		\node[label] (F5) at (-5,.5) {$F_5$:};
	}
	{
		\node[vertex,yshift=-3cm] (12) at (0,.5) {3};
		\node[vertex] (0) [left=1.1cm of 12] {2}
        edge [downx, in=120, out=60] node[auto,swap] {} (12);
		\node[vertex] (13) [below left=1.25cm and .5cm of 0] {5}
		edge [up] node[auto,swap] {} (0);
		\node[vertex] (11) [left=1.1cm of 0] {1}
        edge [down] node[auto,swap] {} (13);
		\node[vertex] (21) [right=1.1cm of 12] {4}
		edge [downx, in=60, out=120] node[auto,swap] {} (12);
		\node[label,yshift=-3cm] (F4) at (-5,.5) {$F_4$:};
	}
	{
		\node[vertex,yshift=-6.5cm] (0) at (-.95,.5) {2};
		\node[vertex] (12) [right=1.5cm of 0] {4}
        edge [downx, in=60, out=120] node[auto,swap] {} (0);
		\node[vertex] (13) [below right=1.25cm and .6cm of 0] {3}
		edge [up] node[auto,swap] {} (0)
        edge [up] node[auto,swap] {} (12);
		\node[vertex] (11) [left=1.5cm of 0] {1}
        edge [downx, in=120, out=60] node[auto,swap] {} (0);
		\node[vertex] (22) [below right=1.25cm and .6cm of 11] {5}
		edge [up] node[auto,swap] {} (11)
        edge [up] node[auto,swap] {} (0);
		\node[label,yshift=-6.5cm] (F3) at (-5,.5) {$F_3$:};
	}
	{
		\node[vertex,yshift=-10cm] (0) at (.75,.5) {4};
		\node[vertex] (13) [below =1.25cm of 0] {3}
		edge [up] node[auto,swap] {} (0);
		\node[vertex] (11) [left=3cm of 0] {1}
        edge [upx, in=120, out=60] node[auto,swap] {} (0);
		\node[vertex] (21) [below =1.25cm of 11] {5}
		edge [up] node[auto] {} (11);
		\node[vertex] (22) [left=1.26cm of 13] {2}
		edge [up] node[auto,swap] {} (11)
        edge [up] node[auto,swap] {} (0)
        edge [down] node[auto] {} (21)
        edge [down] node[auto] {} (13);
		\node[label,yshift=-10cm] (F2) at (-5,.5) {$F_2$:};
	}
	{
		\node[vertex,xshift=-0.98cm,yshift=-14.7cm] (12) {2};
		\node[vertex] (11) [left=1.75cm of 12] {1}
        edge [down] node[auto,swap] {} (12);
		\node[vertex] (13) [right=1.75cm of 12] {3}
        edge [up] node[auto,swap] {} (12);
		\node[vertex] (22) [below right=1.5cm and .82cm of 11] {5}
		edge [up] node[auto,swap] {} (11)
        edge [up] node[auto,swap] {} (12);
		\node[vertex] (0) [above left=1.5cm and .82cm of 12] {4}
		edge [down] node[auto,swap] {} (11)
		edge [down] node[auto,swap] {} (12)
		edge [down] node[auto] {} (13);
		\node[label,xshift=1cm,yshift=-13.4cm] (F1) at (-6,.5) {$F_1$:};
	}
\end{tikzpicture}
\end{center}
\caption{Possible realization of Kingman's $(2,5)$-coalescent $(F_5,\ldots,F_1)$. For each step, dotted lines represent the added edges. In this case, $(a_{5,1},a_{5,2},a_{5,3})=(1,2,5),(a_{4,1},a_{4,2},a_{4,3})=(2,3,4),(a_{3,1},a_{3,2},a_{3,3})=(1,2,3),(a_{2,1},a_{2,2})=(1,2)$ and $\xi_5=3,\xi_4=2,\xi_3=2,\xi_2=1$.}
\label{fig:coalescent_process}
\end{figure}
We describe the construction process using the language of games. In each step $i$, we select $(m+1) \land i$ roots in $F_i$ to participate in a \textbf{die roll}. We call these the $\textbf{selected}$ roots of step $i$. Roll a fair $((m+1) \land i)$-sided die. Select root $r(t_{a_{i,k}}^{(i)})$ to \textbf{lose} the die roll if the die comes up with $k$. The other roots \textbf{win} the die roll. The losing root is connected by directed edges to the winning roots. Note that by the definition of a root, the losing root is no longer a root in the resulting graph, whilst the winning roots continue to be roots. Therefore, the number of roots in the coalescent decreases in every step by one.\par
Next, to relate the Kingman coalescent to RRDAGs, we need the following lemma.
\begin{lemma}
    \label{lemma:coalescent_yields_random_chain}
    For $m,n \in \N$, Kingman's $(m,n)$-coalescent yields a uniformly random element of  $\CJ_n^{(m)}$ and $|\CJ_n^{(m)}| = n!\,|\CI_n^{(m)}|$.
    \proof
    Let $m,n \in \N$. If $(f_n,\ldots,f_1) \in \CF_n^n$ is a fixed $(m,n)$-chain, we have for Kingman's $(m,n)$-coalescent $(F_n,\ldots,F_1)$ that $F_n \equiv f_n$ by definition and by the chain rule of conditional probability
    \begin{equation*}
        \prob((F_n,\ldots,F_1) = (f_n,\ldots,f_1)) = \prod_{i=2}^n \prob(F_{i-1}=f_{i-1}|F_j = f_j, n\geq j \geq i).
    \end{equation*}
    In step $n \geq i \geq m+2$, among the ${i \choose m+1}$ possibilities to select $m+1$ roots out of the $i$ many in $f_i$, there is exactly one that coincides with the selection in $f_{i-1}$. Additionally in step $i$, the correct loser is chosen with probability $i/(m+1)$. However, in step $m+1 \geq i \geq 2$, we select the correct roots automatically. This is because we need to select $i$ out of $i$ roots here, which is a deterministic choice. Thus, we only have to choose the correct loser, which happens in this case with probability $1/i$. We consequently obtain
    \begin{equation}
        \label{eq:coalescent_uniform_chain}
        \prob((F_n,\ldots,F_1) = (f_n,\ldots,f_1)) = \left(\prod_{i=m+2}^n\frac{1}{{i \choose m+1}}\frac{1}{m+1}\right)\prod_{i=2}^{m+1} \frac{1}{i}.
    \end{equation}
    As this expression does not depend on $(f_n,\ldots,f_1)$ arbitrarily chosen in $\CF_n^n$, it follows that $(F_n,\ldots,F_1)$ is uniformly distributed in $\CJ_n^{(m)}$. With the cardinality of $\CJ_n^{(m)}$ being given by the reciprocal of the expression in \eqref{eq:coalescent_uniform_chain} due to uniformity of $(F_n,\ldots,F_1)$, and using Proposition \ref{prop:cardinality_increasing_dags}, we have
    \begin{equation*}
        \begin{split}
            |\CJ_n^{(m)}| &= \left(\prod_{i=m+2}^n{i \choose m+1}(m+1)\right) (m+1)! = \left(\prod_{i=m+1}^n{i \choose m+1}(m+1)\right) m!\\
            &= \frac{n!}{(n-m-1)!}\prod_{i=m+1}^{n-1}\frac{i!}{m!(i-m-1)!} = n!\prod_{i=m+1}^{n-1}\frac{i!}{m!(i-m)!} = n!\,|\CI_n^{(m)}|,
        \end{split}
    \end{equation*}
    which concludes the proof. \qed
\end{lemma}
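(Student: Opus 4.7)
The strategy is to compute, for an arbitrary fixed chain $(f_n,\ldots,f_1) \in \CJ_n^{(m)}$, the probability that Kingman's $(m,n)$-coalescent $(F_n,\ldots,F_1)$ equals it, and to verify that the answer depends only on $m$ and $n$. The construction is Markov in the reverse time parameter $i$ by design, so the chain rule gives
\begin{equation*}
    \prob((F_n,\ldots,F_1)=(f_n,\ldots,f_1)) = \prod_{i=2}^{n} \prob(F_{i-1}=f_{i-1}\mid F_i=f_i),
\end{equation*}
the factor at $i=n$ being trivially $1$ since $F_n$ is deterministic.

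I would then evaluate each step-$i$ conditional factor by splitting into two regimes. For $i \geq m+2$, the coalescent picks a uniform $(m+1)$-subset of the $i$ current roots and then an independent uniform loser among them, so matching the unique choice prescribed by $f_{i-1}$ has probability $\bigl(\binom{i}{m+1}(m+1)\bigr)^{-1}$. For $2 \leq i \leq m+1$, the subset equals all current roots deterministically and only the $i$-sided loser die is nontrivial, contributing a factor $i^{-1}$. Since the resulting product over $i$ does not depend on the chain, every element of $\CJ_n^{(m)}$ has equal probability, proving uniformity, and its reciprocal yields $|\CJ_n^{(m)}|$.

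To derive the identity $|\CJ_n^{(m)}| = n!\, |\CI_n^{(m)}|$, I would use the algebraic identity $(m+1)\binom{i}{m+1} = i!/(m!(i-m-1)!)$, extend the product over $i \geq m+2$ down to $i=m+1$ (a harmless extension that inserts an extra factor of $m+1$, reducing the prefactor $(m+1)!$ to $m!$), split off the $i=n$ term to produce the factor $n!/(n-m-1)!$, and then telescope the remaining $\prod_{i=m+1}^{n-1} 1/(i-m-1)!$ against $1/(n-m-1)!$ into $\prod_{i=m+1}^{n-1} 1/(i-m)!$. This yields precisely the factor $n!$ in front of $\prod_{i=m+1}^{n-1}\binom{i}{m}$, which by Proposition \ref{prop:cardinality_increasing_dags} equals $n!\,|\CI_n^{(m)}|$.

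The main obstacle I anticipate is the bookkeeping at the boundary between the two regimes $i \leq m+1$ and $i \geq m+2$, where the structure of the selection step changes qualitatively, and coordinating the resulting factors of $(m+1)!$ and $m!$ correctly when simplifying the telescoping binomial product into clean closed form. A useful sanity check is that the factor $n!$ admits a combinatorial interpretation: each increasing DAG in $\CI_n^{(m)}$ should arise from exactly $n!$ distinct chains, corresponding to the possible reverse-time orders in which its vertices can be designated as losers during the coalescence.
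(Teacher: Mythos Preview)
Your proposal is correct and follows essentially the same approach as the paper: the same chain-rule decomposition, the same split into the regimes $i \geq m+2$ and $2 \leq i \leq m+1$ with the same conditional probabilities, and the same algebraic simplification (extending the product to $i=m+1$, peeling off the $i=n$ factor, and telescoping the $(i-m-1)!$ denominators into $(i-m)!$). The combinatorial sanity check you add at the end is not used in the paper's proof of this lemma but is exactly the content of the subsequent Proposition~\ref{prop:relabeled_coalescent}.
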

There is a natural mapping between $(m,n)$-chains and the increasing directed graphs in $\CI_n^{(m)}$: Given an $(m,n)$-chain $C \coloneq (f_n,\ldots,f_1)$, we define an edge labeling function on $f_1$ that assigns each edge the step number of its addition by
\begin{equation*}
    L_C^-\colon E_{f_1} \to \{2,\ldots,n\}, e \mapsto \min(i \in \{2,\ldots,n\}: e \notin E_{f_i}).
\end{equation*}
Now, we define a vertex labeling function $L_C\colon V_{f_1}\to [n]$ as
\begin{equation}
    \label{eq:relabeling_function_L_C}
    L_C(u) \coloneq  
    \begin{cases*}
        L_C^-((u,v)) \text{ for any edge } (u,v) \in E_{f_1} &\mbox{if outdeg$(u)>0,$}\\
        1 &\mbox{else}.
    \end{cases*}
\end{equation}
The function $L_C$ assigns each vertex the unique number of the step it lost a die roll, or number 1 for the unique vertex that never lost a die roll. Note that the vertex labeling $L_C$ is well-defined, as the outgoing edges of a vertex $u$ have all been added in the same step. If we consider the edges along a directed path, the edge labeling function $L_C^-$ is decreasing by construction. The vertex labeling $L_C$ is also decreasing along directed paths as $L_C$ assigns each vertex the label of an outgoing edge or 1 for the unique root in $f_1$. Hence, relabeling the vertices of the final graph in an $(m,n)$-chain constructed by Kingman's $(m,n)$-coalescent by $L_C$ yields an increasing directed graph in $\CI_n^{(m)}$. For example, Figure \ref{fig:coalescent_relabeling} shows this relabeling by $L_C$ based on the realization of Kingman's $(2,5)$-coalescent from Figure \ref{fig:coalescent_process}.\par
\begin{figure}
\begin{tikzpicture}
	[vertex/.style={minimum size=13pt, inner sep=0pt,circle,draw},
	up/.style={->,>=stealth, draw,thick},
	down/.style={<-,>=stealth, draw,thick},
	upx/.style={->,>=stealth, draw,thick, dotted},
	downx/.style={<-,>=stealth, draw,thick,dotted},
	label/.style={minimum size=13pt, inner sep=0pt,circle},
	]
	
	\begin{scope}[xshift=0cm]
		\node[vertex,xshift=-1.6cm,yshift=0cm] (12) {2};
		\node[vertex] (11) [left=1.75cm of 12] {1}
        edge [down] node[auto,swap] {3} (12);
		\node[vertex] (13) [right=1.75cm of 12] {3}
        edge [up] node[auto] {4} (12);
		\node[vertex] (22) [below right=1.5cm and .82cm of 11] {5}
		edge [up] node[auto] {5} (11)
        edge [up] node[auto,swap] {5} (12);
		\node[vertex] (0) [above left=1.5cm and .82cm of 12] {4}
		edge [down] node[auto,swap] {2} (11)
		edge [down] node[auto,swap] {3} (12)
		edge [down] node[auto] {4} (13);
		\node[label,xshift=1cm,yshift=1.3cm] (F1) at (-6,.5) {$F_1$:};
	\end{scope}
	
	\begin{scope}[xshift=7cm]
		\node[vertex,xshift=-1.6cm,yshift=0cm] (12) {3};
		\node[vertex] (11) [left=1.75cm of 12] {2}
        edge [down] node[auto,swap] {} (12);
		\node[vertex] (13) [right=1.75cm of 12] {4}
        edge [up] node[auto,swap] {} (12);
		\node[vertex] (22) [below right=1.5cm and .82cm of 11] {5}
		edge [up] node[auto,swap] {} (11)
        edge [up] node[auto,swap] {} (12);
		\node[vertex] (0) [above left=1.5cm and .82cm of 12] {1}
		edge [down] node[auto,swap] {} (11)
		edge [down] node[auto,swap] {} (12)
		edge [down] node[auto] {} (13);
		\node[label,xshift=1cm,yshift=1.3cm] (F1) at (-6,.5) {$F_1$ by $L_C$:};
	\end{scope}
\end{tikzpicture}
    \caption{An example for the relabeling based on the graph $F_1$ from Figure \ref{fig:coalescent_process}. The left graph shows $F_1$ with its edges labeled by $L_C^-$ and the right graph $F_1$ relabeled by $L_C$.}
    \label{fig:coalescent_relabeling}
\end{figure}
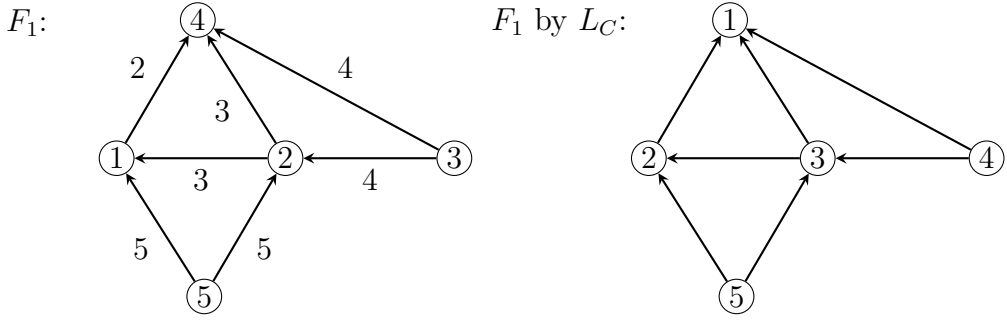
Furthermore, as we present in the following proposition, a uniformly random $(m,n)$-chain leads to a uniformly random element of $\CI_n^{(m)}$, and together with Proposition \ref{prop:cardinality_increasing_dags} we can deduce that the relabeled Kingman coalescent has the law of an RRDAG, as we claimed earlier.
\begin{proposition}
    \label{prop:relabeled_coalescent}
    Let $m,n \in \N$ and $\Phi\colon \CJ_n^{(m)} \to \CI_n^{(m)}, C=(f_n,\ldots,f_1) \mapsto G$, where  $G$ equals $f_1$ with its vertices relabeled by $L_C$. Then, $\Phi(\text{Unif}\,(\CJ_n^{(m)})) \sim \text{Unif}\,(\CI_n^{(m)})$.
    \proof
    Fix $G \in \CI_n^{(m)}$. For each $j \in V_G \setminus \{1\}$, define the set $V_j \coloneq \{v \in V_G\colon (j,v) \in E_G\}$ and the set $E_j \coloneq \{(j,v)\colon v \in V_j\}$. Note that $|V_j|=m$ for $n\geq j\geq m+1$ and $|V_j|=j-1$ for $m\geq j\geq 1$. Now, we construct an $(m,n)$-chain $C \coloneq (f_n,\ldots,f_1)$ as follows. Let $f_n$ be the graph on $[n]$ without edges. For $n \geq i > 1$, construct $f_{i-1}$ from $f_i$ by adding all the edges in $E_i$. Then we have $L_C^-(E_i) = i$ and therefore $L_C(i) = i$. With additionally $L_C(1) = 1$, $f_1$ does not change during its relabeling by $\Phi$, and as $f_1=G$ due to construction, we derive $\Phi(C)=G$. This implies $\Phi$ being surjective.\par
    Also, if $C_\sigma$ is the $(m,n)$-chain obtained from $C$ by permuting the vertices in each tuple entry of $C$ by the permutation $\sigma\colon [n] \to [n]$, then $\Phi(C) = \Phi(C_\sigma)$. This is the case as, for any vertex $v$, the function $L_C(v)$ depends only on the time of addition of outgoing edges of $v$ (if any), and does not depend on the original label of $v$. With $n!$ possible choices for $\sigma$, we know that there are at least $n!$ preimages under $\Phi$ for each $G \in \CI_n^{(m)}$. Finally, we conclude $|\Phi^{-1}(G)|=n!$ due to the second part of Lemma \ref{lemma:coalescent_yields_random_chain}. Each element of $\CI_n^{(m)}$ having the same amount of preimages means that a uniform distribution on $\CJ_n^{(m)}$ is preserved under $\Phi$, concluding the proof. \qed
\end{proposition}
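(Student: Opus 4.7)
The plan is to exploit the cardinality relation $|\CJ_n^{(m)}| = n!\,|\CI_n^{(m)}|$ from Lemma \ref{lemma:coalescent_yields_random_chain}: since $\Phi$ is a deterministic map between finite sets, it transports the uniform distribution on its domain to the uniform distribution on its image if and only if all fibres $\Phi^{-1}(G)$ for $G\in\Phi(\CJ_n^{(m)})$ have the same size. Because the ratio of cardinalities is exactly $n!$, it suffices to show that $\Phi$ is surjective and that every fibre has at least $n!$ elements.

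For surjectivity, I would pick an arbitrary $G\in\CI_n^{(m)}$ and construct a canonical preimage by reading the edges of $G$ in decreasing order of their source vertex. Concretely, set $f_n=([n],\emptyset)$ and, for each $n\geq i>1$, obtain $f_{i-1}$ from $f_i$ by adding all edges of $G$ whose source is vertex $i$ (the set $E_i$ in the notation of the excerpt). The fact that $G$ is increasing guarantees that every target vertex of $E_i$ still has zero out-degree in $f_i$ (its outgoing edges would be added strictly later), so the step is a legitimate root-to-roots coalescence in the sense of Definition \ref{def:coalescent}, making $C=(f_n,\ldots,f_1)$ a valid $(m,n)$-chain. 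By construction $L_C^-$ assigns value $i$ to every edge of $E_i$, hence $L_C(i)=i$ for every $i\geq 2$, and $L_C(1)=1$; therefore $\Phi(C)=G$.

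For the lower bound on fibre size, I would use the symmetry argument that the relabeling $L_C$ defined in \eqref{eq:relabeling_function_L_C} depends only on the time at which the outgoing edges of a vertex are added, not on the original label of that vertex. Consequently, if $\sigma$ is any permutation of $[n]$ and $C_\sigma$ is the chain obtained from $C=(f_n,\ldots,f_1)$ by relabeling the vertex set of every $f_i$ by $\sigma$, then $L_{C_\sigma}(\sigma(v))=L_C(v)$ and hence $\Phi(C_\sigma)=\Phi(C)$. Distinct permutations yield distinct chains (different vertex relabelings of $f_1$ are different graphs when $f_1$ has more than trivial structure, which is the case for $n\geq 2$; the case $n=1$ is immediate), giving at least $n!$ chains in $\Phi^{-1}(G)$. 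Combined with surjectivity and $|\CJ_n^{(m)}|=n!\,|\CI_n^{(m)}|$, each fibre has size exactly $n!$, and uniformity is preserved. The one step that warrants care is checking that the canonical preimage constructed for surjectivity really is an $(m,n)$-chain, i.e.\ that the vertices connected at step $i$ are indeed roots of $f_i$; this hinges precisely on the increasing property of $G$ and the outdegree condition \eqref{eq:outdeg_condition_for_dag}.
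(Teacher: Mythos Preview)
Your proposal is correct and follows essentially the same approach as the paper: construct a canonical preimage for surjectivity by reading off the edges of $G$ in decreasing order of source vertex, use the permutation symmetry of $L_C$ to get at least $n!$ preimages per fibre, and invoke the cardinality relation from Lemma~\ref{lemma:coalescent_yields_random_chain} to conclude equality. If anything, you are slightly more careful than the paper in explicitly flagging that the target vertices at step $i$ are still roots (via the increasing property) and that distinct permutations yield distinct chains.
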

Let $m,n \in \N$. Consider the RRDAG $G_n^{(m)}$ and Kingman's $(m,n)$-coalescent $C=(F_n,\ldots,F_1)$. Recall that the number of in-neighbors of a vertex is called its degree and is denoted by $d_{G_n^{(m)}}(a)$ for a vertex $a\in G_n^{(m)}$ and by $d_{F_1}(b)$ for a vertex $b\in F_1$. By our definition, the label of a vertex in a general graph is the natural number representing that vertex. However, for the label of a vertex in $F_1$, we refer instead to the natural number that represents the vertex after its relabeling.
\begin{definition}
    Let $L_C$ as in \eqref{eq:relabeling_function_L_C}. The label of vertex $v\in F_1$ is defined as $\ell_{F_1}(v)\coloneq L_C(v)$, i.e.\ the number of the step at which $v$ lost a die roll (or, in other words, stopped being a root).
\end{definition}
Recall that the \textbf{ungreedy depth} $u_{G_n^{(m)}}(v)$ of a vertex $v\in G_n^{(m)}$ is the length of the unique directed path from $v$ to the root, visiting the highest labeled vertices along the path. On the other hand, for Kingman's coalescent, we again use a different definition. In this case, the ungreedy depth of a vertex is the length of the directed path, after vertex relabeling, from the relabeling of that vertex to the root, visiting the highest labeled vertex at each step.
\begin{definition}
    Let $\Phi$ be as in \eqref{prop:relabeled_coalescent}. For $v \in F_1$, the ungreedy depth $u_{F_1}(v)$ is defined as the length of the unique directed path in $\Phi(C)$ from $L_C(v)$ to the root that visits, at each step along the path, the vertex with the highest possible label.
\end{definition}
With an RRDAG and the relabeled Kingman coalescent having the same law, we can then formulate the following corollary of Proposition \ref{prop:relabeled_coalescent}.
\begin{corollary}
    \label{corollary:interchangeability_degrees}
    For $m,n \in \N$, let $G_n^{(m)}$ be an RRDAG and let $F_1$ be the resulting graph in Kingman's $(m,n)$-coalescent. Let $\sigma\colon [n] \to [n]$ be a uniformly random permutation on $[n]$. Then,
    \begin{equation*}
        (d_{F_1}(v),\ell_{F_1}(v),u_{F_1}(v))_{v \in [n]} \overset{d}{=} \left(d_{G_n^{(m)}}(\sigma(v)),\sigma(v),u_{G_n^{(m)}}(\sigma(v))\right)_{v \in [n]}.
    \end{equation*}
    Additionally, jointly for $i \in \N$ and $A \subset [n]$, we have
    \begin{equation*}
        |\{v \in A\colon d_{F_1}(v) = i\}| \overset{d}{=} |\{v \in [n]\colon \sigma(v) \in A, d_{G_n^{(m)}}(\sigma(v)) = i\}|.
    \end{equation*}
\end{corollary}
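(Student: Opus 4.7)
The plan is to upgrade the distributional identity $\Phi(C)\sim G_n^{(m)}$ of Proposition~\ref{prop:relabeled_coalescent} into a joint identity that couples the output graph with the relabeling function $L_C$. The core step is to show that, writing $C=(F_n,\ldots,F_1)$ for Kingman's $(m,n)$-coalescent, the map $C\mapsto(L_C,\Phi(C))$ is a bijection from $\CJ_n^{(m)}$ onto $S_n\times\CI_n^{(m)}$, where $S_n$ denotes the symmetric group on $[n]$. Since $C$ is uniform on $\CJ_n^{(m)}$ by Lemma~\ref{lemma:coalescent_yields_random_chain}, this immediately implies that $L_C$ is a uniform random permutation \emph{independent} of $\Phi(C)$.

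For the bijection, I would re-use the $S_n$-symmetry already exploited in the proof of Proposition~\ref{prop:relabeled_coalescent}: for $\pi\in S_n$, let $C_\pi$ denote the chain obtained from $C$ by permuting initial vertex labels via $\pi$. That proof shows $\Phi(C_\pi)=\Phi(C)$ and $|\Phi^{-1}(G)|=n!$ for each $G\in\CI_n^{(m)}$. Checking against the definition~\eqref{eq:relabeling_function_L_C} further yields $L_{C_\pi}=L_C\circ\pi^{-1}$, since the step at which a vertex's outgoing edges are added is intrinsic to the chain and is not affected by how the initial vertices are named. Thus the $n!$ chains in each fibre yield $n!$ distinct values of $L_C$, so the map is injective on fibres; the cardinality identity $|\CJ_n^{(m)}|=n!\,|\CI_n^{(m)}|$ from Lemma~\ref{lemma:coalescent_yields_random_chain} then promotes this to a bijection, and uniformity of $C$ transfers to the product space, giving the required independence.

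With this independence in hand, both identities in the corollary follow routinely. Setting $\sigma:=L_C$ and identifying $G_n^{(m)}$ with $\Phi(C)$, the invariance of graph structure under relabeling gives $d_{F_1}(v)=d_{\Phi(C)}(L_C(v))$, while $\ell_{F_1}(v)=L_C(v)$ and $u_{F_1}(v)=u_{\Phi(C)}(L_C(v))$ hold by definition; this yields the first identity coordinatewise over $v\in[n]$. The second identity is then obtained by applying the functional $\sum_{v\in[n]}\mathbbm{1}[\ell_{F_1}(v)\in A,\,d_{F_1}(v)=i]$ to both sides of the first, reading ``$v\in A$'' on the left as ``$\ell_{F_1}(v)\in A$'' (i.e.\ the label of $v$ in the relabeled graph lies in $A$). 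The main obstacle lies in the first step: the clean relation $L_{C_\pi}=L_C\circ\pi^{-1}$ is conceptually straightforward but requires careful bookkeeping of how the edge-step indexing $L_C^-$ transforms under a pre-permutation of the initial labels, so that one derives this exact composition rather than some twisted variant.
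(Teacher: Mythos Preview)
Your proposal is correct and in fact considerably more careful than the paper, which offers no proof at all: the corollary is simply announced as following from Proposition~\ref{prop:relabeled_coalescent}. Your bijection argument---showing that $C\mapsto(L_C,\Phi(C))$ is a bijection $\CJ_n^{(m)}\to S_n\times\CI_n^{(m)}$ via the orbit relation $L_{C_\pi}=L_C\circ\pi^{-1}$ combined with the fibre count $|\Phi^{-1}(G)|=n!$---is precisely the content the paper leaves implicit, and it is the clean way to extract the independence of $L_C$ and $\Phi(C)$ from the counting in Lemma~\ref{lemma:coalescent_yields_random_chain} and Proposition~\ref{prop:relabeled_coalescent}. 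The first identity then drops out exactly as you describe.

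Your handling of the second identity is also the right call, and worth highlighting: you are not merely paraphrasing but \emph{correcting} the statement. Under the literal reading of ``$v\in A$'' on the left (original coalescent labels), the identity fails already for $m=1$, $n=2$, $A=\{1\}$, $i=1$: the left side is $\mathbbm{1}\{d_{F_1}(1)=1\}$, which is Bernoulli$(1/2)$, while the right side collapses (via $w=\sigma(v)$) to $|\{w\in A:d_{G_n^{(m)}}(w)=i\}|=1$ deterministically. Reading ``$v\in A$'' as ``$\ell_{F_1}(v)\in A$'' repairs this and makes the identity a genuine pushforward of the first one under your functional $\sum_v\mathbbm{1}[\ell_{F_1}(v)\in A,\,d_{F_1}(v)=i]$. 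Since the paper only ever uses the case $A=[n]$, where the two readings coincide, the discrepancy is harmless downstream---but your observation is the honest way to derive the displayed formula.
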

For ease of writing and to make dependencies clear, we replace the subscript $F_1$ with $n$ in the following. Due to Corollary \ref{corollary:interchangeability_degrees}, it is sufficient to work with the coalescent from now on and not with the RRDAG itself. Consequently, the results in Section \ref{subsec:statement_results}, where we have considered uniformly random vertices $(V_i)_{i\in[k]}$ for $k\in\N$, can now be thought of as results for fixed vertices $1,\ldots,k$ in Kingman's coalescent.

\subsection{Selection sets and connection sets}
\label{subsec:selection_and_connection_sets}

For our further analysis, we introduce some more notation related to Kingman's coalescent, which is partially based on notation in \cite{Lodewijks.2023}.\par
Let $m,n \in \N$ and $(f_n,...,f_1)$ be an $(m,n)$-chain. Recall that, for $i \in [n]$, the graph $f_i$ contains $i$ trees listed in lexicographic order of their increasingly ordered vertices as $t_1^{(i)},\ldots,t_i^{(i)}$. For $i,v \in [n]$, let $t^{(i)}(v)$ denote the first tree in ($t_1^{(i)},\ldots,t_i^{(i)}$) containing the vertex $v$ and let $s_{v,i}$ be the indicator that $t^{(i)}(v) \in \{t^{(i)}_{a_{i,1}},\ldots,t^{(i)}_{a_{i,(m+1)\land i}}\}$ with $a_{i,1},\ldots,a_{i,(m+1)\land i}$ as in Definition \ref{def:coalescent}. That is, $s_{v,i}$ equals one if $t^{(i)}(v)$ is one of the $(m+1)\land i$ trees selected for merging at step $i$, and in this case we say that vertex $v$ is $\textbf{selected}$ at step $i$. Note that this definition of a vertex being selected is a generalization of our definition for roots being selected in Section \ref{chap:kingman_coalescent}. As the selection of trees to be merged in each step is independent and uniformly distributed, the variables $(s_{v,i})_{i \in \{2,\ldots,n\}}$ are independent Bernoulli random variables for each fixed $v \in [n]$ with $\prob(s_{v,i}=1) = ((m+1)\land i)/i$. We call the set of steps in which vertex $v \in [n]$ is selected the $\textbf{selection set}$ of $v$ defined by
\begin{equation}
    \label{eq:definition_selection_set}
    \mathcal{S}_n(v) \coloneq \{i \in \{2,\ldots,n\}\colon s_{v,i} = 1\},
\end{equation}
listed as $\mathcal{S}_n(v) = \{i_{v,1},\ldots,i_{v,S_n(v)}\}$ with $i_{v,1}>i_{v,2}>\ldots>i_{v,S_n(v)}$, where $S_n(v) \coloneq |\mathcal{S}_n(v)|$.\par
To express the degree and the label of a vertex in terms of selection sets, we introduce, for $i,v \in [n]$, the random variable $h_{v,i}$ as the indicator that $r(T^{(i)}(v))$ is selected and loses at step $i$. For fixed $v \in [n]$, the family $(h_{v,i})_{i \in \{2,\ldots,n\}}$ are independent Bernoulli random variables with $\prob(h_{v,i}=1) = 1/i$, as they solely depend on independent random variables and the loser is chosen uniformly at random in each step. If vertex $v$ is still a root at step $i$, we have $r(T^{(i)}(v)) = v$, which means that $h_{v,i}$ is one if root $v$ is selected and loses at step $i$ (and thus no longer is a root afterward).\par
Using this notation, the degree of a vertex $v \in [n]$ can be written as
\begin{equation}
    \label{eq:express_degree_via_selection_set}
    d_n(v) = \max\{d \in \{0,\ldots,S_n(v)\}\colon h_{v,i_{v,1}}=\ldots=h_{v,i_{v,d}}=0\}.
\end{equation}
In other words, the degree of vertex $v$ equals the length of its first winning streak when being selected, i.e.\ the length of the first streak of zeros of the indicators $(h_{v,i_{v,\ell}})_{\ell \in [S_n(v)]}$. Similarly, we can express the label of $v$ by
\begin{equation}
    \label{eq:express_label_in_terms_selection_sets}
    \ell_n(v) = \max\{i \in \mathcal{S}_n(v)\colon h_{v,i}=1\}=\max\{i \in [n]\colon h_{v,i}=1\}.
\end{equation}
By setting $h_{v,1}\coloneq 1$ for all $v\in[n]$, we have $\max\{i \in [n]\colon h_{v,i}=1\}=1$ if there is no $2\leq i\leq n$ such that $h_{v,i}=1$ (which corresponds to vertex $v$ being the root of $F_1$, so that its relabeling with $L_C$ as in \eqref{eq:relabeling_function_L_C} yields $\ell_n(v)=1$). Recall that exactly one root loses in each step, and therefore we have $\ell_n(v)\neq\ell_n(v^\prime)$ whenever $v\neq v^\prime$.\par
The ungreedy depth of $v$, on the other hand, cannot be expressed in terms of the selection set $S_n(v)$. This is different to the case of an RRT ($m=1$), where it suffices to use $S_n(v)$. For the ungreedy depth of a vertex in the case of an RRDAG, we consider the length of the path from its relabeling to the root, visiting the highest relabeled vertices on the way. This path is constructed in the coalescent step by step in the following way:\\
Consider a vertex $v\in[n]$. The vertex has no connections at first. After it loses a die roll, it sends outgoing edges to $m$ roots. The path determining the ungreedy depth visits the one of them with the highest label after relabeling, that is, the first of these roots that loses a die roll. We add the directed edge from $v$ to the first losing root to the path that we construct. Now, this first losing root is again connected to $m$ roots, and we wait until one of them loses, adding the corresponding edge to the path. Then, we repeat this process over and over until step $m$ of the coalescent. At step $m$, we have $m$ roots left, and they form a directed clique throughout the rest of the process, with their labels decreasing along the directed edges. Therefore, starting at step $m$, we successively add all the vertices in the clique to the path in decreasing order of their labels.\par
We now introduce the construction more formally. Recall the notation from Definition \ref{def:coalescent}, and let $v\in[n]$. We define $\CC_n(v)\coloneq\{\CC^{(i)}_n(v)\}_{i\in\{2,\ldots,n\}}$, where the set of roots $\CC^{(i)}_n(v)\subset[n]$ is the \textbf{connection set} of vertex $v$ at step $i\in\{2,\ldots,n\}$ recursively constructed in the following way: Initialize $\CC^{(n)}_n(v)\coloneq \{v\}$. Then, for each $3\leq i\leq n$, we set
\begin{equation}
    \label{eq:definition_connection_sets}
    \CC^{(i-1)}_n(v) \coloneq  
    \begin{cases*}
        \CC^{(i)}_n(v) &\mbox{if $r(t_{a_{i,\xi_i}}^{(i)}) \notin \CC^{(i)}_n(v)$,}\\
        \{r(t_{a_{i,j}}^{(i)})\colon 1\leq j\leq (m+1) \land i,\,j\neq\xi_i\}  &\mbox{if $r(t_{a_{i,\xi_i}}^{(i)}) \in \CC^{(i)}_n(v)$.}
    \end{cases*}
\end{equation}
The connection set of $v$ at step $i$ is the set of all the roots to which vertex $v$ is connected via directed paths that lead to an increase of the ungreedy depth of $v$ if one of them loses at step $i$. Using the above construction and the notion of connection sets, we can express the ungreedy depth of $v$ as
\begin{equation}
    \label{eq:express_ungreedy_depth_in_terms_connection_sets}
    u_n(v) = |\{2\leq i \leq n\colon h_{w,i}=1 \text{ for any }w \in \CC^{(i)}_n(v)\}|=\sum_{i=2}^n\sum_{w\in \CC^{(i)}_n(v)}h_{w,i}.
\end{equation}
In words, \eqref{eq:express_ungreedy_depth_in_terms_connection_sets} characterizes the ungreedy depth of vertex $v$ by counting the steps $i\in\{2,\ldots,n\}$ in which the losing root is an element of $\CC^{(i)}_n(v)$. Note that $\sum_{w\in \CC^{(i)}_n(v)}h_{w,i}$ for $i\in\{2,\ldots,n\}$ is either 0 or 1 almost surely, and thus the ungreedy depth can be viewed as a sum of indicator random variables.

\newpage

\section{Properties of vertex degrees}

In this chapter, we prove Theorems \ref{theorem:joint_degree_distribution}, \ref{theorem:poisson_convergence}, \ref{theorem:maximum_degree}, and \ref{theorem:asymptotic_normality_smaller_degrees}, utilizing our adapted version of Kingman's coalescent with $m \in \N$. Note that the results are generalizations from results obtained in \cite{Addario.Eslava.2018} for RRTs. While Section \ref{subsec:asymptotic_joint_degree_distribution} contains the proof of Theorem \ref{theorem:joint_degree_distribution}, split in several lemmas and propositions, Sections \ref{subsec:moment_estimates_vertex_count_degree} and \ref{subsec:large_degree_vertices} present the proofs of the other theorems.

\subsection{Asymptotic joint degree distribution}

\label{subsec:asymptotic_joint_degree_distribution}

To prove Theorem \ref{theorem:joint_degree_distribution}, we consider the joint degree distribution of vertices $1,\ldots,k$ in Kingman's coalescent  for $k\in\N$. We start by establishing an upper and a lower bound on the tail of the joint distribution of vertex degrees.
\begin{lemma}
    \label{lemma:upper_bound_tail_vertex_degrees}
    Let $n\in\N$. For any $k \in [n]$ and $d_1,\ldots,d_k \in [n-1]$,
    \begin{equation}
        \prob(d_n(v)\geq d_v, v \in [k]) \leq \left(\frac{m}{m+1}\right)^{\sum_v d_v}\prob(S_n(v)\geq d_v,v\in[k]).
    \end{equation}
    \proof
    Let $\mathcal{A}$ be the set of sequences $A = (A_1,\ldots,A_k)$ satisfying $A_v \subset \{2,\ldots,n\}$ and $|A_v|=d_v$ for all $v\in[k]$. For every $A \in \mathcal{A}$, let $D_A$ be the event that $S_n(v) \geq d_v$ and $\{i_{v,1},\ldots,i_{v,d_v}\} = A_v$ for all $v\in[k]$. In words, $D_A$ is the event that $v$ is selected at least $d_v$ times and the first $d_v$ selections are given by $A_v$ for all $v\in[k]$. By \eqref{eq:express_degree_via_selection_set} and since $D_A \subset \{S_n(v) \geq d_v, v\in[k]\}$, we have
    \begin{equation}
        \label{eq:decompose_degree_into_coinflips}
        \begin{split}
            \{d_n(v) \geq d_v, v\in[k]\}\cap D_A &= \{h_{v,i_{v,1}}=\ldots=h_{v,i_{v,d_v}}=0 \text{ for all } v\in[k]\}\cap D_A\\
            &= \{v \text{ wins at steps } i_{v,1},\ldots,i_{v,d_v} \text{ for all }v\in[k]\}\cap D_A.
        \end{split}
    \end{equation}
    Let $A^* \in \mathcal{A}$ such that $A^*_1,\ldots,A^*_k$ are disjoint. Recall from Definition \ref{def:coalescent} the independent random variables $\xi_i \sim \text{Unif}(\{1,\ldots,(m+1) \land i\})$ determining the loser in step $i \in \{2,\ldots,n\}$. Conditionally on $D_{A^*}$, the probability of the event $\{v \text{ wins at steps } i_{v,1},\ldots,i_{v,d_v} \text{ for all }v\in[k]\}$ is bounded from above by
    \begin{equation}
        \label{eq:degree_upper_bound_conditional_DA_rhs}
        \left(\frac{m}{m+1}\right)^{\sum_v d_v}, 
    \end{equation}
    as we have, for $i \in \{2,...,n\}$ and $\ell \in \{1,\ldots,(m+1) \land i\}$,
    \begin{equation}
        \prob(\xi_i\neq \ell) = 1-\frac{1}{(m+1)\land i} \leq \frac{m}{m+1}.
    \end{equation}
    On the other hand, for arbitrary $A \in \mathcal{A}$ not necessarily consisting of disjoint sets, we can establish the same upper bound. Having multiple given vertices win at the same time is harder than having the vertices win at distinct times.
    If $r$ vertices have to win at the same step $i \in \{m+1,\ldots,n\}$ given $D_A$, this is less likely than if they were chosen at different times before step $m$:
    \begin{equation}
        \max\left\{\frac{m-r+1}{m+1},0\right\}\leq \Big(\frac{m}{m+1}\Big)^r.
    \end{equation}
    This is a direct result of the inequality
    \begin{equation}
        \label{eq:product_sum_inequality}
        \prod_{i=1}^r (1-x_i)\geq 1-\sum_{i=1}^r x_i,
    \end{equation}
    with $x_i=1/(m+1)$ for all $i\in\{1,\ldots,r\}$. If we also consider times after step $m$, the proof works analogous.\\
    To conclude, we deduce the same upper bound as in \eqref{eq:degree_upper_bound_conditional_DA_rhs} for arbitrary $A \in \mathcal{A}$:
    \begin{equation}
        \label{eq:degree_upper_bound_conditional_DA}
        \prob(v\text{ wins at steps } i_{v,1},\ldots,i_{v,d_v} \text{ for all }v\in[k]\} | D_A) \leq \left(\frac{m}{m+1}\right)^{\sum_v d_v}.
    \end{equation}
    As the events $(D_A, A \in \mathcal{A})$ are pairwise disjoint and $\{d_n(v)\geq d_v, v \in [k]\} = \bigcup_{A \in \mathcal{A}}D_A$, it follows using \eqref{eq:degree_upper_bound_conditional_DA} that
    \begin{equation}
        \begin{split}
            \prob(d_n(v)\geq d_v, v \in [k]) &= \sum_{A\in\mathcal{A}}\prob(D_A,d_n(v)\geq d_v, v \in [k])\\
            &\leq  \sum_{A\in\mathcal{A}}\left(\frac{m}{m+1}\right)^{\sum_v d_v}\prob(D_A)\\
            &= \left(\frac{m}{m+1}\right)^{\sum_v d_v}\prob(S_n(v)\geq d_v,v\in[k]),
        \end{split}
    \end{equation}
    which coincides with the desired result.\qed
\end{lemma}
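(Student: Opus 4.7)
The plan is to decompose $\{d_n(v) \ge d_v, v \in [k]\}$ by conditioning on the identity of the first $d_v$ selection times of each vertex. Let $\mathcal{A}$ be the set of tuples $A = (A_1, \ldots, A_k)$ with $A_v \subset \{2, \ldots, n\}$ and $|A_v| = d_v$, and let $D_A$ be the event that the top $d_v$ entries of $\mathcal{S}_n(v)$, listed in decreasing order, coincide with $A_v$ for each $v \in [k]$. These events partition $\{S_n(v) \ge d_v, v \in [k]\}$, and the representation \eqref{eq:express_degree_via_selection_set} rewrites the restriction of the degree event to $D_A$ as $\{h_{v,i} = 0 \text{ for all } v \in [k],\, i \in A_v\}$. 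Thus it suffices to establish, uniformly in $A \in \mathcal{A}$,
\begin{equation*}
\prob\bigl(h_{v,i} = 0 \text{ for all } v \in [k],\, i \in A_v \bigm| D_A\bigr) \leq \Bigl(\frac{m}{m+1}\Bigr)^{\sum_v d_v},
\end{equation*}
after which multiplying by $\prob(D_A)$ and summing over $A \in \mathcal{A}$ produces the factor $\prob(S_n(v) \ge d_v, v \in [k])$ on the right, giving the lemma.

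To bound the conditional probability, I would step through $i \in \bigcup_v A_v$. Let $W_i = \{v \in [k] : i \in A_v\}$ be the set of vertices whose winning streak requires $h_{v,i} = 0$ at step $i$. Since $h_{v,i}$ is determined solely by whether the tree $t^{(i)}(v)$ is the loser at step $i$, if the vertices in $W_i$ occupy $r_i$ distinct trees, the event $\{h_{v,i} = 0 \text{ for all } v \in W_i\}$ reduces to ``none of these $r_i$ selected trees loses'', which has conditional probability $\max\{(m+1)\wedge i - r_i,\, 0\}/((m+1)\wedge i)$. The product-sum inequality $\prod_{j=1}^{r_i}(1 - 1/(m+1)) \ge 1 - r_i/(m+1)$ yields the upper bound $(m/(m+1))^{r_i}$, and conditional independence of the losing choices across different steps gives the product bound $(m/(m+1))^{\sum_i r_i}$ for the full joint winning event.

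The main obstacle I foresee is to promote $r_i$ to $|W_i|$ in the exponent, since a priori only $r_i \le |W_i|$ is available. The crux is the claim that $r_i = |W_i|$ on the joint event $\{h_{v,i} = 0, v\in[k], i\in A_v\} \cap D_A$: if two vertices $v, v' \in W_i$ shared a tree at step $i$, then their trees (separate in $F_n$) must have first merged at some step $j > i$, at which point both of their then-trees were selected, so $j \in \mathcal{S}_n(v) \cap \mathcal{S}_n(v')$. Because $j > i \ge \min A_v$ and $A_v$ is the set of top $d_v$ selections of $v$, necessarily $j \in A_v$, and analogously $j \in A_{v'}$. But the merger at step $j$ forces one of the two trees to be the loser, producing $h_{v,j}=1$ or $h_{v',j}=1$, which contradicts the joint winning event. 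Hence $r_i = |W_i|$ throughout, $\sum_i r_i = \sum_i |W_i| = \sum_v d_v$, and combining with the step-by-step bound concludes the proof.
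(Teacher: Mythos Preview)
Your proof is correct and follows the same route as the paper: decompose over the events $D_A$, rewrite the degree event as a winning event, bound the conditional winning probability step by step via the product--sum inequality, and sum. The paper's treatment of non-disjoint $A$ is terser --- it simply asserts that if $r$ vertices must win simultaneously at step $i$ the conditional probability is $\max\{(m{-}r{+}1)/(m{+}1),0\}\le (m/(m{+}1))^r$, without discussing whether these $r$ vertices actually occupy $r$ distinct selected trees. You make that point explicit with your $r_i=|W_i|$ claim, which is a genuine clarification. Your merging justification for it, though, is more elaborate than needed and leans on RRT intuition (in the RRDAG coalescent trees overlap rather than merge pairwise). The direct reason is simpler: on the joint winning event every $v\in W_i$ has not yet lost by step $i$, hence is still a root, and distinct roots sit in distinct trees, so $r_i=|W_i|$ immediately.
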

For the lower bound, we follow the same procedure as in the proof of Lemma \ref{lemma:upper_bound_tail_vertex_degrees} but consider only events $D_{A^*}$ with $A^* \in \mathcal{A}$ such that $A^*_1,\ldots,A^*_k$ are disjoint. To facilitate this, we introduce, for $k\in\N$, the random variable
\begin{equation*}
    \label{eq:definition_tau_k}
    \tau_k \coloneq \max\{2 \leq i \leq n\colon s_{v,i}=s_{v^\prime,i}=1 \text{ for distinct } v,v^\prime \in [k]\}
\end{equation*}
as the first step at which two vertices $v,v^\prime \in [k]$ are selected simultaneously. Thus, the vertices $1,\ldots,k$ are contained in disjoint trees up to step $\tau_k$.\par
We are now ready to formulate the following lower bound on the tail of the degree distribution of vertices $1,\ldots,k$.
\begin{lemma}
    \label{lemma:lower_bound_tail_vertex_degrees}
    Let $n\in\N$. For any $k \in \{2,\ldots,n\}$, $d_1,\ldots,d_k \in [n-1]$ and $I\geq 2$,
    \begin{equation}
        \prob(d_n(v)\geq d_v, v \in [k]) \geq \left(\frac{m}{m+1}\right)^{\sum_v d_v}\prob(I > \tau_k,|\mathcal{S}_n(v)\cap\{I,\ldots,n\}|\geq d_v,v\in[k]).
    \end{equation}
    \proof
    We have
    \begin{equation}
    \label{eq:degree_lower_bound_introduce_truncation}
        \begin{split}
            \prob(d_n(v)\geq d_v, v \in [k]) &\geq \prob(d_{F_{I-1}}(v)\geq d_v, v \in [k])\\
            &\geq \prob(I > \tau_k,d_{F_{I-1}}(v)\geq d_v, v \in [k]),
        \end{split}
    \end{equation}
    where $d_{F_{I-1}}(v)$ is the degree of vertex $v$ at step $I-1$ of the coalescent. To deal with \eqref{eq:degree_lower_bound_introduce_truncation}, we proceed as in Lemma \ref{lemma:upper_bound_tail_vertex_degrees}.\\
    Let $\mathcal{A}^*_I$ be the set of sequences $A = (A_1,\ldots,A_k)$ of pairwise disjoint subsets of $\{I,\ldots,n\}$ satisfying $|A_v|=d_v$ for all $v\in[k]$. For each $A \in \mathcal{A}_I^*$, let $D_A$ be the event that $|\mathcal{S}_n(v)\cap\{I,\ldots,n\}| \geq d_v$ and $\{i_{v,1},\ldots,i_{v,d_v}\} = A_v$ for all $v\in[k]$.
    With the same reasoning as for \eqref{eq:decompose_degree_into_coinflips}, we can conclude
    \begin{equation}
        \{d_{F_{I-1}}(v) \geq d_v, v\in[k]\}\cap D_A = \{v \text{ wins at steps } i_{v,1},\ldots,i_{v,d_v} \text{ for all }v\in[k]\}\cap D_A.
    \end{equation}
    Here, the sets $A_v$ are pairwise disjoint. If $\prob(D_A)>0$, then
    \begin{equation}
        \prob(v\text{ wins at steps } i_{v,1},\ldots,i_{v,d_v} \text{ for all }v\in[k]\} | D_A) = \left(\frac{m}{m+1}\right)^{\sum_v d_v}.
    \end{equation}
    The event $\{I>\tau_k\}$ occurs if, and only if, the sets $\{\mathcal{S}_n(v)\cap\{I,\ldots,n\},v \in [k]\}$ are pairwise disjoint, that is, if the event $D_A$ occurs for some $A\in \mathcal{A}_I^*$. It follows that
    \begin{equation}
    \label{eq:degree_distribution_lower_bound_disjoint_sets}
        \begin{split}
            \prob(I > \tau_k,d_{F_{I-1}}(v)\geq d_v, v \in [k]) &= \sum_{A\in\mathcal{A}^*_I}\prob(D_A,d_{F_{I-1}}(v)\geq d_v, v \in [k])\\
            &= \sum_{A\in\mathcal{A}^*_I}\left(\frac{m}{m+1}\right)^{\sum_v d_v}\prob(D_A). 
        \end{split}
    \end{equation}
    Moreover, we have
    \begin{equation}
        \sum_{A\in\mathcal{A}^*_I}\prob(D_A) = \prob(I > \tau_k,|\mathcal{S}_n(v)\cap\{I,\ldots,n\}|\geq d_v,v\in[k]),
    \end{equation}
    and this, together with \eqref{eq:degree_lower_bound_introduce_truncation} and \eqref{eq:degree_distribution_lower_bound_disjoint_sets}, finishes the proof.\qed
\end{lemma}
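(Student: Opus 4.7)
The plan is to mirror the decomposition used in the proof of Lemma~\ref{lemma:upper_bound_tail_vertex_degrees}, but to restrict attention to configurations in which the tracked selection sets are pairwise disjoint. On such configurations the loose inequality $\prob(\text{wins} \mid D_A) \leq (m/(m+1))^{\sum_v d_v}$ used for the upper bound becomes an exact equality, which is precisely what is needed to obtain a matching lower bound. To buy this restriction, I truncate the coalescent to the window of steps $\{I,\ldots,n\}$ and intersect with the decoupling event $\{I > \tau_k\}$, on which the selection sets of the tracked vertices $1,\ldots,k$ within $\{I,\ldots,n\}$ are automatically pairwise disjoint.

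Concretely, I would first use that vertex degrees in the coalescent are non-decreasing as further steps are executed. This yields the monotonicity $d_n(v) \geq d_{F_{I-1}}(v)$, and hence
\begin{equation*}
\prob(d_n(v) \geq d_v,\, v \in [k]) \geq \prob(I > \tau_k,\, d_{F_{I-1}}(v) \geq d_v,\, v \in [k]).
\end{equation*}
I would then decompose the right-hand side over the collection $\mathcal{A}_I^*$ of sequences $A = (A_1,\ldots,A_k)$ of pairwise disjoint subsets of $\{I,\ldots,n\}$ with $|A_v| = d_v$, defining $D_A$ as the event that the first $d_v$ elements of $\mathcal{S}_n(v) \cap \{I,\ldots,n\}$ are exactly the elements of $A_v$ for every $v \in [k]$. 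By the representation \eqref{eq:express_degree_via_selection_set}, the intersection $\{d_{F_{I-1}}(v) \geq d_v,\, v \in [k]\} \cap D_A$ coincides with the event that $v$ wins its die roll at every step in $A_v$ for every $v \in [k]$. Pairwise disjointness of the $A_v$ ensures that at each relevant step only one of the tracked vertices is selected, so conditionally on $D_A$ the $\sum_v d_v$ win events are independent and each has probability $m/(m+1)$, giving $\prob(\text{all wins} \mid D_A) = (m/(m+1))^{\sum_v d_v}$. Summing over $A \in \mathcal{A}_I^*$ and using that $\bigcup_A D_A$ equals $\{I > \tau_k\} \cap \{|\mathcal{S}_n(v) \cap \{I,\ldots,n\}| \geq d_v,\, v \in [k]\}$ then completes the argument.

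The main obstacle is to justify the exact conditional probability on $D_A$. The event $D_A$ is a function of the selection tuples $(a_{i,\cdot})_{i \in \{I,\ldots,n\}}$, which by Definition~\ref{def:coalescent} are independent of the loser indicators $(\xi_i)_{i \in \{I,\ldots,n\}}$, so conditioning on $D_A$ leaves each $\xi_i$ uniform on $\{1,\ldots,(m+1)\wedge i\}$ and still independent across steps. The restriction to disjoint $A_v$ is essential here: it guarantees that at each step $i \in \bigcup_v A_v$ precisely one tracked root participates, so the event ``$v$ wins at $i$'' factors into a single non-trivial condition on $\xi_i$ at that step. Without the disjointness one would need to account for several tracked vertices competing in the same die roll, which is exactly the source of the inequality (rather than equality) in the upper bound proof, and the very complication that the introduction of $\tau_k$ is designed to circumvent.
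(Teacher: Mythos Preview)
Your proposal is correct and follows essentially the same route as the paper's proof: truncate to steps $\{I,\ldots,n\}$ via monotonicity of degrees, intersect with $\{I>\tau_k\}$, decompose over the events $D_A$ indexed by pairwise disjoint $A\in\mathcal{A}_I^*$, exploit disjointness to turn the inequality from the upper-bound argument into an exact equality $(m/(m+1))^{\sum_v d_v}$, and sum using $\bigcup_{A}D_A=\{I>\tau_k\}\cap\{|\mathcal{S}_n(v)\cap\{I,\ldots,n\}|\geq d_v,\,v\in[k]\}$. Your added justification via the independence of the $(\xi_i)$ from the selection tuples is exactly the mechanism the paper relies on implicitly.
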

To make use of Lemma \ref{lemma:lower_bound_tail_vertex_degrees}, we need tail bounds for $\tau_k$ and for $|\mathcal{S}_n(v)\cap\{I,\ldots,n\}|$ with suitable $I\geq 2$. The latter is provided by the following lemma:
\begin{lemma}
    \label{lemma:tail_bound_truncated_selection_set}
    Let $\varepsilon\in(0,1)$ and $c \in (0,(m+1)(1-\varepsilon))$. Then, there exists a constant $\beta>0$ such that for any vertex $v$,
    \begin{equation}
        \prob(|\mathcal{S}_n(v)\cap\{\ceil{n^\varepsilon},\ldots,n\}|< c\log n) = o(n^{-\beta}),
    \end{equation}
    as $n\to\infty$.
    \proof
    Fix $\varepsilon\in(0,1)$ and $c \in (0,(m+1)(1-\varepsilon))$. Let $v \in [n]$ and $Q_n \coloneq |\mathcal{S}_n(v)\cap\{\ceil{n^\varepsilon},\ldots,n\}|$. Recalling the definition of selection sets in \eqref{eq:definition_selection_set}, we can represent $Q_n$ as the sum of independent Bernoulli random variables
    \begin{equation}
        Q_n = \sum_{i=\ceil{n^\varepsilon}}^n s_{v,i}
    \end{equation}
    with
    \begin{equation}
        \E{Q_n} = \sum_{i=\ceil{n^\varepsilon}}^n \frac{m+1}{i} = (m+1)(1-\varepsilon)\log n+O(1).
    \end{equation}
    The last equality is due to the logarithm approximation of the harmonic sum $\sum_{i=1}^n1/i = \log n +O(1)$. Now, using Bernstein's inequality (see Exercise 5.2.1 in \cite{Klenke.2020}), we have
    \begin{equation}
        \prob(Q_n  < c \log n) \leq \exp\left(-\frac{\left(1-\frac{c\log n}{\E{Q_n}}\right)^2\E{Q_n}}{2}\right) = \exp\left(-\frac{(\delta\log n + O(1))^2}{2\E{Q_n}}\right),
    \end{equation}
    where $\delta \coloneq (m+1)(1-\varepsilon)-c$. Rewriting the term in the exponential yields
    \begin{equation}
        \prob(Q_n  < c \log n) = \exp\left(-\frac{\delta^2}{2(m+1)(1-\varepsilon)}\log n+O(1)\right),
    \end{equation}
    and by choosing $0 < \beta < \delta^2/2(m+1)(1-\varepsilon)$ we obtain the desired result.\qed
\end{lemma}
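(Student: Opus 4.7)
The plan is to express $Q_n := |\mathcal{S}_n(v)\cap\{\ceil{n^\varepsilon},\ldots,n\}|$ as a sum of independent Bernoulli random variables and apply a Chernoff/Bernstein-type concentration bound. From the construction of selection sets in \eqref{eq:definition_selection_set}, the indicators $(s_{v,i})_{i=2}^n$ are independent with $\prob(s_{v,i}=1)=((m+1)\wedge i)/i$. Consequently, for $n$ large enough so that $\ceil{n^\varepsilon}>m$, we have
\begin{equation*}
    Q_n=\sum_{i=\ceil{n^\varepsilon}}^n s_{v,i},\qquad \mu_n:=\E{Q_n}=\sum_{i=\ceil{n^\varepsilon}}^n\frac{m+1}{i}=(m+1)(1-\varepsilon)\log n+O(1),
\end{equation*}
by the harmonic approximation $\sum_{i=1}^N 1/i=\log N+O(1)$. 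Note that the distribution of $Q_n$ does not depend on the choice of vertex $v$, so the uniformity in $v$ required by the statement is automatic.

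Since $c<(m+1)(1-\varepsilon)$, the slack $\delta:=(m+1)(1-\varepsilon)-c$ is strictly positive, and $c\log n=\mu_n-\delta\log n+O(1)$ lies at linear-in-$\log n$ distance below the mean. Applying a standard lower-tail Chernoff bound for sums of independent $\{0,1\}$-valued random variables yields
\begin{equation*}
    \prob(Q_n<c\log n)\leq \exp\!\left(-\frac{(\mu_n-c\log n)^2}{2\mu_n}\right)=\exp\!\left(-\frac{\delta^2}{2(m+1)(1-\varepsilon)}\log n+O(1)\right),
\end{equation*}
which is $O(n^{-\delta^2/[2(m+1)(1-\varepsilon)]})$. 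Choosing any $\beta\in(0,\delta^2/[2(m+1)(1-\varepsilon)])$ then gives the desired $o(n^{-\beta})$ bound.

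Conceptually this argument is routine once the independence of $(s_{v,i})_i$ is in hand, which is already observed in Section \ref{subsec:selection_and_connection_sets}. The only point of care is selecting a concentration inequality sharp enough to convert the exponential in $\log n$ into a polynomial in $n$: a variance-based bound (Chebyshev) would only produce an $O(1/\log n)$ estimate, whereas Bernstein's inequality --- equivalently, a Cramér-type large-deviation bound for Bernoulli sums --- immediately delivers the required polynomial decay rate. I do not expect any genuine obstacle here; the same lemma could alternatively be proved by writing $Q_n$ as a sum of independent indicators with heterogeneous means and invoking the relative-entropy form of the Chernoff bound, with identical conclusions.
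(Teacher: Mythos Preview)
Your proposal is correct and follows essentially the same route as the paper: write $Q_n$ as a sum of independent Bernoulli indicators with mean $(m+1)(1-\varepsilon)\log n+O(1)$, then apply a Bernstein/Chernoff lower-tail bound to get decay $\exp(-\tfrac{\delta^2}{2(m+1)(1-\varepsilon)}\log n+O(1))$ with $\delta=(m+1)(1-\varepsilon)-c$, and pick $\beta$ below that exponent. Your additional remarks on uniformity in $v$ and on why Chebyshev would be insufficient are accurate but go slightly beyond what the paper spells out.
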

For the tail bound on $\tau_k$ we need to prove that the probability of two vertices $v,v^\prime \in [k]$ being selected simultaneously not very late in the process is sufficiently small. While this is a rather short calculation in the case of an RRT, it requires a bit more work for an RRDAG. Therefore, we present one of the technical calculations in a separate auxiliary lemma.
\begin{lemma}
    \label{lemma:degree_polynome}
    Let $m\in\N$ and $k\geq 2$. We define the polynomial $f_m\colon\N\to\N$ by
    \begin{equation}
        f_m(i) \coloneq \left(\prod_{j=0}^m(i-j)\right)-(i+m(k-1))\left(\prod_{j=0}^{m-1}(i-k-j)\right).  
    \end{equation}
    Then, $\text{deg}(f_m)=m-1$ and
    \begin{equation}
        i^{m-1}[f_m] = \frac{(m+1)m}{2}k(k-1).
    \end{equation}
    \proof
    We prove the lemma by induction over $m$. For $m=1$, we have
    \begin{equation}
        f_1(i) = i(i-1)-(i+k-1)(i-k)=k(k-1),
    \end{equation}
    resulting in $\text{deg}(f_1)=0$ and $i^0[f_1]=k(k-1)$. Assume that $\text{deg}(f_{m-1})=m-2$ and $i^{m-2}[f_{m-1}] = m(m-1)k(k-1)/2$ for some fixed value $m\in\N$. Now consider the polynomial $\widetilde{f}_m\colon\N\to\N$ defined by
    \begin{equation}
        \label{eq:degree_polynome_lemma_induction_step}
        \widetilde{f}_m(i) \coloneq (i-m)f_{m-1}(i)+mk(k-1)\prod_{j=0}^{m-2}(i-k-j).
    \end{equation}
    Due to the induction assumption, the first term in \eqref{eq:degree_polynome_lemma_induction_step} is of degree $m-1$ with leading coefficient $m(m-1)k(k-1)/2$. The second term is of degree $m-1$ as well, with a leading coefficient of $mk(k-1)$. Consequently, we have $\text{deg}(\widetilde{f}_m)=m-1$ with
    \begin{equation}
        i^{m-1}[\widetilde{f}_m] = \frac{m(m-1)}{2}k(k-1) + mk(k-1) = \frac{(m+1)m}{2}k(k-1),
    \end{equation}
    and the proof is finished if we show $\widetilde{f}_m = f_m$. Factoring out in \eqref{eq:degree_polynome_lemma_induction_step} leads to
    \begin{equation}
        \widetilde{f}_m(i) = \prod_{j=0}^m(i-j)-((i-m)(i+(m-1)(k-1))+mk(k-1))\prod_{j=0}^{m-2}(i-k-j),
    \end{equation}
    and, by rearranging the first part of the subtrahend, we arrive at
    \begin{equation}
        \prod_{j=0}^m(i-j)-(i+m(k-1))(i-k-m+1)\prod_{j=0}^{m-2}(i-k-j),
    \end{equation}
    which coincides with $f_m$.\qed
\end{lemma}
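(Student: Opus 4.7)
My plan is to follow the inductive approach on $m$ that is already hinted at by the form of the statement. The structure is to verify a trivial base case, then reduce the inductive step to a single polynomial identity that cleanly propagates the leading coefficient from $f_{m-1}$ to $f_m$.

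The base case $m = 1$ is a direct expansion: $f_1(i) = i(i-1) - (i+k-1)(i-k) = k(k-1)$, a constant equal to $\tfrac{2\cdot 1}{2} k(k-1)$, matching the claim.

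For the inductive step, the core of the argument is the recursion
\begin{equation*}
    f_m(i) = (i-m)\, f_{m-1}(i) + m k (k-1) \prod_{j=0}^{m-2}(i-k-j).
\end{equation*}
Substituting the definition of $f_{m-1}$ and factoring out $\prod_{j=0}^{m-2}(i-k-j)$ from every term other than $\prod_{j=0}^m(i-j)$, this identity reduces to the purely quadratic identity
\begin{equation*}
    (i-m)\bigl(i + (m-1)(k-1)\bigr) - m k (k-1) = \bigl(i + m(k-1)\bigr)\bigl(i - k - m + 1\bigr),
\end{equation*}
which follows by expanding both sides and comparing the three coefficients of $i^2, i, 1$. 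This is the only genuine algebraic step and the one I expect to be the main technical obstacle, though it is routine.

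Given the recursion, the inductive hypothesis $\deg(f_{m-1}) = m-2$ with leading coefficient $\tfrac{m(m-1)}{2} k(k-1)$ implies that $(i-m)\, f_{m-1}(i)$ has degree $m-1$ and leading coefficient $\tfrac{m(m-1)}{2} k(k-1)$, while the correction $m k(k-1) \prod_{j=0}^{m-2}(i-k-j)$ has degree $m-1$ and leading coefficient $m k (k-1)$. Summing yields $\deg(f_m) = m-1$ and
\begin{equation*}
    i^{m-1}[f_m] = \frac{m(m-1)}{2} k(k-1) + m k (k-1) = \frac{m(m+1)}{2} k(k-1),
\end{equation*}
as required, completing the induction.
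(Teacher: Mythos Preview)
Your proof is correct and essentially identical to the paper's: both use induction on $m$ with the same base case, the same recursion $f_m(i) = (i-m)f_{m-1}(i) + mk(k-1)\prod_{j=0}^{m-2}(i-k-j)$, and the same leading-coefficient computation. The only cosmetic difference is that you verify the recursion by reducing to the quadratic identity and comparing coefficients, whereas the paper expands $(i-m)(i+(m-1)(k-1)) - mk(k-1)$ directly into $(i+m(k-1))(i-k-m+1)$; these are the same algebraic check presented in two equivalent ways.
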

Equipped with Lemma \ref{lemma:degree_polynome}, we are now ready to prove the tail bound on $\tau_k$.
\begin{proposition}
    \label{prop:tightness_tau_k}
    Fix $k\geq2$. Then, for any integer-valued sequence $(t_n)_{n\in\N}$ which diverges to infinity as $n\to\infty$, we have
    \begin{equation}
        \prob(\tau_k\geq t_n)\leq \frac{(m+1)mk(k-1)}{t_n-m-1}
    \end{equation}
    for $n$ large enough. In particular, $(\tau_k)_{n\in\N}$ is a tight sequence of random variables.
    \proof
    The event $\{\tau_k<t_n\}$ occurs if, and only if, at most one vertex of the set $[k]$ is selected in each step $i\in\{t_n,\ldots,n\}$. For $i\in\{t_n,\ldots,n\}$, we calculate
    \begin{equation}
        \begin{split}
            \prob(|\{a_{i,1},\ldots,a_{i,m+1}\}\cap[k]|\leq 1) &= \frac{\prod_{j=0}^{m}(i-k-j)}{\prod_{j=0}^{m}(i-j)}+(m+1)\frac{k\prod_{j=0}^{m-1}(i-k-j)}{\prod_{j=0}^{m}(i-j)}\\
            &= (i+m(k-1))\frac{\prod_{j=0}^{m-1}(i-k-j)}{\prod_{j=0}^{m}(i-j)},
        \end{split}
    \end{equation}
    and as these events are independent,
    \begin{equation}
        \begin{split}
            \prob(\tau_k<t_n) &= \prod_{i=t_n}^n\prob(|\{a_{i,1},\ldots,a_{i,m+1}\}\cap[k]|\leq 1)\\
            &=\prod_{i=t_n}^n\frac{(i+m(k-1))\prod_{j=0}^{m-1}(i-k-j)}{\prod_{j=0}^{m}(i-j)}\\
            &= \prod_{i=t_n}^n\left(1-\frac{f_m(i)}{\prod_{j=0}^{m}(i-j)}\right),
        \end{split}
    \end{equation}
    with $f_m(i)$ being defined as in Lemma $\ref{lemma:degree_polynome}$. By again making use of the inequality \eqref{eq:product_sum_inequality}, we derive
    \begin{equation}
        \begin{split}
            \prod_{i=t_n}^n\left(1-\frac{f_m(i)}{\prod_{j=0}^{m}(i-j)}\right) \geq 1-\sum_{i=t_n}^n\frac{f_m(i)}{\prod_{j=0}^{m}(i-j)}>1-\sum_{i=t_n-m}^{n-m}\frac{f_m(i+m)}{i^{m+1}}.
        \end{split}
    \end{equation}
    By applying Lemma $\ref{lemma:degree_polynome}$ and taking $n$ large enough, this can be further bounded by
    \begin{equation}
        \begin{split}
            \!\!1-\sum_{i=t_n-m}^{n-m}\frac{f_m(i+m)}{i^{m+1}} \geq 1-\sum_{i=t_n-m}^{n-m}\frac{(m+1)mk(k-1)}{i^2}> 1-\sum_{i=t_n-m}^{\infty}\frac{(m+1)mk(k-1)}{i^2}.
        \end{split}
    \end{equation}
    Since $\sum_{i=\ell}^\infty i^{-2} \leq \int_{\ell-1}^\infty x^{-2}\dd x=(\ell-1)^{-1}$ for $\ell\geq 2$, we get
    \begin{equation}
        \prob(\tau_k\geq t_n)\leq \sum_{i=t_n-m}^{\infty}\frac{(m+1)mk(k-1)}{i^2} \leq \frac{(m+1)mk(k-1)}{t_n-m-1},
    \end{equation}
    and the result follows.\qed
\end{proposition}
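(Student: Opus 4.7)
The plan is to express $\prob(\tau_k < t_n)$ as a product over coalescent steps and then bound each factor. The starting point is an invariance observation: if at step $i \geq t_n$ at most one vertex from $[k]$ is selected, and if the vertices $1,\ldots,k$ were in $k$ distinct trees just before step $i$, then they remain in $k$ distinct trees just after step $i$. This holds because the losing root of a step merges only into the winning trees; so with at most one of the $k$ trees containing a vertex of $[k]$ among the $(m+1)\wedge i$ selected ones, none of these $k$ trees become identified. By descending induction from $n$ down to $t_n$, the event $\{\tau_k < t_n\}$ is exactly the event that at every step $i \in \{t_n,\ldots,n\}$, at most one of the $k$ trees containing a vertex from $[k]$ is selected.

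Next, I would condition step-by-step on the tree structure and exploit the independence of the fresh coalescent variables $(a_{i,\cdot},\xi_i)$. Conditionally on any tree structure in which $1,\ldots,k$ occupy $k$ distinct trees at step $i$, the event in question depends only on a uniformly random $(m+1)$-subset of the $i$ tree indices, and by symmetry its probability equals the hypergeometric quantity
\begin{equation*}
p_i \;=\; \frac{\binom{i-k}{m+1} + k\binom{i-k}{m}}{\binom{i}{m+1}},
\end{equation*}
independently of which indices host the $k$ marked trees. The tower rule then yields $\prob(\tau_k < t_n) = \prod_{i=t_n}^n p_i$.

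The remaining task is to bound $1 - p_i$. Writing $\binom{i}{m+1} - \binom{i-k}{m+1} - k\binom{i-k}{m}$ in a common-denominator form, the dominant contribution as $i\to\infty$ comes from subsets containing two of the $k$ marked indices, giving $\binom{k}{2}\binom{i-k}{m-1}/\binom{i}{m+1} \sim (m+1)m k(k-1)/(2 i^2)$, with the remaining terms of order $i^{-3}$ or smaller. A careful algebraic estimate then yields $1 - p_i \leq (m+1)m k(k-1)/i^2$ for all $i$ sufficiently large. Combining the elementary inequality $\prod_i(1-x_i) \geq 1 - \sum_i x_i$ with $\sum_{i\geq t_n} i^{-2} \leq 1/(t_n-1)$ gives
\begin{equation*}
\prob(\tau_k \geq t_n) \;\leq\; \sum_{i=t_n}^\infty (1 - p_i) \;\leq\; \frac{(m+1)m k(k-1)}{t_n - m - 1},
\end{equation*}
for $n$ large enough, with the $-m-1$ in the denominator absorbing the shift in the asymptotic expansion. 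Tightness of $(\tau_k)_{n\in\N}$ is then immediate, since for each $\eps>0$ one can choose $T$ so large that the bound is below $\eps$, uniformly in $n$.

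The main obstacle lies in the combinatorial step: obtaining the clean leading constant $(m+1)mk(k-1)$ requires a careful manipulation of $1 - p_i$, and uniform control of the subleading $O(i^{-3})$ terms for all $i$ in the relevant range. Introducing an auxiliary polynomial (in the spirit of Lemma \ref{lemma:degree_polynome}) can streamline this by encoding the numerator as a polynomial of degree $m-1$ with a known leading coefficient; the remaining steps (conditioning, telescoping the product, and applying Weierstrass and tail-sum inequalities) are then routine.
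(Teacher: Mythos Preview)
Your proposal is correct and follows essentially the same route as the paper: express $\prob(\tau_k<t_n)$ as the product over $i\in\{t_n,\ldots,n\}$ of the hypergeometric probabilities $p_i$, bound $1-p_i$ via the polynomial identity encoded in Lemma~\ref{lemma:degree_polynome}, apply the Weierstrass inequality $\prod(1-x_i)\geq 1-\sum x_i$, and finish with the tail bound $\sum_{i\geq\ell}i^{-2}\leq(\ell-1)^{-1}$. Your conditioning argument is in fact more careful than the paper's presentation (which tacitly uses that the probability does not depend on which $k$ tree indices host the marked vertices), and the paper makes your ``absorbing the shift'' remark precise by substituting $i\mapsto i+m$ before applying the leading-coefficient bound, which is exactly where the $t_n-m-1$ denominator originates.
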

We now have all the ingredients to compute the asymptotic joint degree distribution of $k\in\N$ fixed vertices in the coalescent. Considering one fixed vertex $v$, we have, as $n$ tends to infinity, $S_n(v)\to\infty$ in probability due to Lemma \ref{lemma:tail_bound_truncated_selection_set}. By the characterization of the degree in \eqref{eq:express_degree_via_selection_set}, this leads to the conclusion that $d_n(v)$ is asymptotically geometric for values with a slower growth rate than $S_n(v)$. If instead we consider $k\geq2$ fixed vertices at once, there is a correlation between their degrees. However, as a consequence of Proposition \ref{prop:tightness_tau_k}, the correlation between the degrees of these vertices is only caused by steps late in the process. Therefore, their degrees behave asymptotically like independent geometric random variables, as stated in Theorem \ref{theorem:joint_degree_distribution}.
\begin{proof}[Proof of Theorem \ref{theorem:joint_degree_distribution}]
    Let $c\in(0,m+1)$, $k\in\N$ and let $d_1,\ldots,d_k<c\log n$ be natural numbers. Define
    \begin{equation}
        \varepsilon \coloneq \frac{m+1-c}{(m+1)^2}.
    \end{equation}
     Note that $c \in (0,(m+1)(1-\varepsilon))$, so that Lemma \ref{lemma:tail_bound_truncated_selection_set} holds for some constant $\beta>0$. We deduce the upper bound
     \begin{equation}
         \prob(d_n(v)\geq d_v, v \in [k]) \leq \left(\frac{m}{m+1}\right)^{\sum_v d_v}\prob(S_n(v)\geq d_v,v\in[k])\leq\left(\frac{m}{m+1}\right)^{\sum_v d_v},
     \end{equation}
     utilizing Lemma \ref{lemma:upper_bound_tail_vertex_degrees}. For the lower bound, we apply Lemma \ref{lemma:lower_bound_tail_vertex_degrees} with $I\coloneq\ceil{n^\varepsilon}$ and get
     \begin{equation}
        \label{eq:lower_bound_degree_distribution_proof}
        \!\!\!\!\prob(d_n(v)\geq d_v, v \in [k]) \geq \left(\frac{m}{m+1}\right)^{\sum_v d_v}\prob(\ceil{n^\varepsilon} > \tau_k,|\mathcal{S}_n(v)\cap\{\ceil{n^\varepsilon},\ldots,n\}|\geq d_v,v\in[k]).
     \end{equation}
     Using Proposition \ref{prop:tightness_tau_k} with $t_n\coloneq\ceil{n^\varepsilon}$ and Lemma \ref{lemma:tail_bound_truncated_selection_set}, we bound the probability on the right-hand side of \eqref{eq:lower_bound_degree_distribution_proof} from below by
     \begin{equation}
        1-\prob(\tau_k\geq \ceil{n^\varepsilon})-\sum_{v\in[k]}\prob(|\mathcal{S}_n(v)\cap\{\ceil{n^\varepsilon},\ldots,n\}|< d_v)=1+O(n^{-\varepsilon}+n^{-\beta}).
     \end{equation}
     Choosing $\alpha<\min(\varepsilon,\beta)$, we have $n^{-\varepsilon}+n^{-\beta}=o(n^{-\alpha})$, concluding the proof.
\end{proof}

\subsection{Moment estimates on the vertex count of a certain degree}
\label{subsec:moment_estimates_vertex_count_degree}

We continue with a moment estimate on the number of vertices of a fixed degree, necessary for the proofs of Theorems \ref{theorem:poisson_convergence} and \ref{theorem:asymptotic_normality_smaller_degrees}. For $n\in\N$ and $i\geq -\floor{\log_\frac{m+1}{m}n}$, we define the random variables
\begin{align*}
    &X_i^{(n)}\coloneq|\{v\in[n]\colon d_n(v)=\floor{\log_\frac{m+1}{m}n}+i\}|
    \intertext{and}
    &X_{\geq i}^{(n)}\coloneq|\{v\in[n]\colon d_n(v)\geq\floor{\log_\frac{m+1}{m}n}+i\}|=\sum_{k\geq i}X_i^{(n)}.
\end{align*}
To make use of Theorem \ref{theorem:joint_degree_distribution} for the calculation of factorial moments, we need the following lemma, which is taken from \cite{Addario.Eslava.2018}.
\begin{lemma}[Lemma 5.1, \cite{Addario.Eslava.2018}]
    \label{lemma:inclusion_exclusion_degree}
    Fix any $k,n\in\N$ with $k\leq n$ and integers $d_1,\ldots,d_k$. Then,
    \begin{equation}                                         
        \prob(d_n(u)=d_u,u\in[k])=\sum_{j=0}^k\sum\limits_{\substack{S\subset[k]\\ |S|=j}}(-1)^j\prob(d_n(u)\geq d_u+\ind_{\{u\in S\}},u\in[k]).
    \end{equation}
    Furthermore, for $k^\prime \in\N$ and integers $d_{k+1},\ldots,d_{k+k^\prime}$,
    \begin{equation}
        \begin{split}
            &\prob(d_n(u)=d_u,d_n(v)\geq d_v,1\leq u\leq k<v\leq k+k^\prime)\\
            &=\sum_{j=0}^k\sum\limits_{\substack{S\subset[k]\\ |S|=j}}(-1)^j\prob(d_n(v)\geq d_v+\ind_{\{v\in S\}},v\in[k+k^\prime]).
        \end{split}   
    \end{equation}
\end{lemma}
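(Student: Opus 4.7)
The statement is purely combinatorial, so my plan is to reduce both identities to a single product expansion of indicator functions and only then apply expectation. No structural property of the RRDAG (or of the coalescent) is used — the identity holds pointwise on the underlying sample space.

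For each $u \in [k]$, I would start from the pointwise decomposition
\begin{equation*}
    \mathbbm{1}_{\{d_n(u) = d_u\}} = \mathbbm{1}_{\{d_n(u) \geq d_u\}} - \mathbbm{1}_{\{d_n(u) \geq d_u+1\}},
\end{equation*}
which is valid on the whole sample space since $\{d_n(u)=d_u\} = \{d_n(u)\geq d_u\}\setminus\{d_n(u)\geq d_u+1\}$. Multiplying these $k$ identities together and expanding the product gives
\begin{equation*}
    \prod_{u \in [k]} \mathbbm{1}_{\{d_n(u)=d_u\}} = \sum_{S \subseteq [k]} (-1)^{|S|}\, \mathbbm{1}_{\{d_n(u) \geq d_u + \ind_{u\in S},\, u \in [k]\}},
\end{equation*}
where the indicator on the right records, for each $S$, that every vertex $u$ satisfies the shifted lower-tail event. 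Taking expectations of both sides and regrouping the sum on the right by cardinality $j = |S|$ yields the first identity in the lemma.

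For the second identity, I would simply multiply the product expansion above by the additional factor $\prod_{v=k+1}^{k+k'} \mathbbm{1}_{\{d_n(v) \geq d_v\}}$ before taking expectations. This factor is independent of the index $S \subseteq [k]$ and can be absorbed into each summand, so that the combined indicator equals $\mathbbm{1}_{\{d_n(v) \geq d_v + \ind_{v\in S},\, v \in [k+k']\}}$ under the convention $\ind_{v\in S}=0$ for $v > k$. Taking expectations and regrouping by $|S|$ produces the second formula. The whole argument is the bookkeeping of a finite product expansion together with linearity of expectation, so I do not expect a genuine obstacle; the only point to be careful with is making sure the convention $\ind_{v\in S}=0$ for $v>k$ is applied consistently in the expanded indicator.
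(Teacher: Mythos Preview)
Your argument is correct: the pointwise identity $\mathbbm{1}_{\{d_n(u)=d_u\}}=\mathbbm{1}_{\{d_n(u)\geq d_u\}}-\mathbbm{1}_{\{d_n(u)\geq d_u+1\}}$, expansion of the product over $u\in[k]$, and linearity of expectation give both formulas, and no probabilistic structure is needed. The paper does not supply its own proof of this lemma but simply quotes it from \cite{Addario.Eslava.2018}; the argument there is exactly the indicator expansion you describe.
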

For $r\in\R$ and $a\in\N$, write $(r)_a\coloneq r(r-1)\ldots(r-a+1)$, and for $n\in\N$, let $\varepsilon_n\coloneq\log_{\frac{m+1}{m}}n-\floor{\log_{\frac{m+1}{m}}n}$. We are now ready for the factorial moment calculation.
\begin{proposition}
    \label{prop:factorial_moments}
    For all $c\in(0,m+1)$ and $K\in\N$ there exists a constant $\alpha>0$ such that the following holds. Let $i=i(n)$ and $i^\prime=i^\prime(n)$ such that $0<i+\log_{\frac{m+1}{m}}n<i^\prime+\log_{\frac{m+1}{m}}n<c\log n$. Then, for any non-negative integers $a_i,\ldots,a_{i^\prime}$ with $a_i+\ldots+a_{i^\prime}=K$, we have
    \begin{equation}
        \begin{split}
            &\E{(X_{\geq i^\prime}^{(n)})_{a_{i^\prime}}\prod_{i\leq k<i^\prime}(X_k^{(n)})_{a_k}}\\
            &= \left(\left(\frac{m}{m+1}\right)^{i^\prime-\varepsilon_n}\right)^{a_{i\prime}}\prod_{i\leq k<i^\prime}\left(\frac{1}{m+1}\left(\frac{m}{m+1}\right)^{k-\varepsilon_n}\right)^{a_k}(1+o(n^{-\alpha})),
        \end{split}
    \end{equation}
    as $n\to\infty$.
    \proof
    Let $c\in(0,m+1)$ and $K\in\N$. For $i\leq k\leq i^\prime$ and for each integer $v$ with $\sum_{\ell=1}^{k-1}a_\ell<v\leq \sum_{\ell=1}^{k}a_\ell$, let $d_v=\floor{\log_{\frac{m+1}{m}}n}+k$. Let $K^\prime\coloneq K-a_{i^\prime}$. Using Theorem 2.7 from \cite{Hofstad.2017}, we have, for the desired factorial moments, the representation
    \begin{equation}
        \label{eq:factorial_moment_representation}
        \E{(X_{\geq i^\prime}^{(n)})_{a_{i^\prime}}\prod_{i\leq k<i^\prime}(X_k^{(n)})_{a_k}} = (n)_K\prob(d_n(u)=d_u,d_n(v)\geq d_v,1\leq u\leq K^\prime<v\leq K).
    \end{equation}
    Applying Lemma \ref{lemma:inclusion_exclusion_degree} to \eqref{eq:factorial_moment_representation} yields
    \begin{equation}
        \E{(X_{\geq i^\prime}^{(n)})_{a_{i^\prime}}\prod_{i\leq k<i^\prime}(X_k^{(n)})_{a_k}} = (n)_K\sum_{j=0}^{K^\prime}\sum\limits_{\substack{S\subset[K^\prime]\\ |S|=j}}(-1)^j\prob(d_n(v)\geq d_v+\ind_{\{v\in S\}},v\in[K]).
    \end{equation}
    Now we can apply Theorem \ref{theorem:joint_degree_distribution} to each of the terms. Since $d_v < c\log n$ for $v\in[K]$, there exists a constant $\alpha^\prime>0$ such that
    \begin{equation}
        \begin{split}
            \label{eq:apply_degree_distribution_theorem_on_facotrial_moment}
            &\sum_{j=0}^{K^\prime}\sum\limits_{\substack{S\subset[K^\prime]\\ |S|=j}}(-1)^j\prob(d_n(v)\geq d_v+\ind_{\{v\in S\}},v\in[K])\\
            &=\sum_{j=0}^{K^\prime}\sum\limits_{\substack{S\subset[K^\prime]\\ |S|=j}}(-1)^j\left(\frac{m}{m+1}\right)^{j+\sum_v d_v}(1+o(n^{-\alpha^\prime}))\\
            &=\left(\frac{m}{m+1}\right)^{\sum_v d_v}(1+o(n^{-\alpha^\prime}))\sum_{j=0}^{K^\prime}\sum\limits_{\substack{S\subset[K^\prime]\\ |S|=j}}(-1)^j\left(\frac{m}{m+1}\right)^j.
        \end{split}
    \end{equation}
    From the binomial theorem, we derive
    \begin{equation}
        \sum_{j=0}^{K^\prime}{K^\prime \choose j}\left(-\frac{m}{m+1}\right)^j=\left(\frac{1}{m+1}\right)^{K^\prime}.
    \end{equation}
    Combining this with the last line of \eqref{eq:apply_degree_distribution_theorem_on_facotrial_moment}, we arrive at
    \begin{equation}
        \label{eq:factorial_moment_final_representation}
        \begin{split}
            \E{(X_{\geq i^\prime}^{(n)})_{a_{i^\prime}}\prod_{i\leq k<i^\prime}(X_k^{(n)})_{a_k}} &= (n)_K\left(\frac{1}{m+1}\right)^{K^\prime}\left(\frac{m}{m+1}\right)^{\sum_v d_v}(1+o(n^{-\alpha^\prime}))\\
            &=\left(\frac{1}{m+1}\right)^{K^\prime}\left(\frac{m}{m+1}\right)^{-K\log_{\frac{m+1}{m}}n+\sum_v d_v}(1+o(n^{-\alpha})),
        \end{split}
    \end{equation}
    letting $\alpha \coloneq \min\{\alpha^\prime,1\}$ and using that $(n)_K=n^K(1+o(n^{-1}))$. The first exponent in \eqref{eq:factorial_moment_final_representation} can be expressed as $K^\prime=\sum_{k=i}^{i^\prime-1}a_k$, leading to
    \begin{equation}
        \label{eq:factorial_moment_inserrted_first_exponent}
        \left(\frac{1}{m+1}\right)^{K^\prime}=\prod_{i\leq k<i^\prime}\left(\frac{1}{m+1}\right)^{a_k},
    \end{equation}
    and the second exponent can be expressed as
    \begin{equation}
        \begin{split}
            \label{eq:factorial_moment_second_exponent}
            -K\log_{\frac{m+1}{m}}n+\sum_{v=1}^K d_v &= \sum_{v=K^\prime+1}^K (d_v-\log_{\frac{m+1}{m}}n)+\sum_{v=1}^{K^\prime} (d_v-\log_{\frac{m+1}{m}}n)\\
            &=(i^\prime-\varepsilon_n)a_{i^\prime}+\sum_{k=i}^{i^\prime-1}(k-\varepsilon_n)a_k.
        \end{split}
    \end{equation}
    To complete the proof, insert \eqref{eq:factorial_moment_inserrted_first_exponent} and \eqref{eq:factorial_moment_second_exponent} into \eqref{eq:factorial_moment_final_representation}.\qed
\end{proposition}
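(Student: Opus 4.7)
The plan is to reduce the mixed factorial moment to a signed sum of purely tail probabilities, and then invoke Theorem \ref{theorem:joint_degree_distribution} uniformly on each summand. First I would use the standard indicator decomposition of factorial moments: label the $K = a_{i^\prime} + K^\prime$ (with $K^\prime = K - a_{i^\prime}$) required slots so that, for $i \leq k < i^\prime$, the $a_k$ slots in block $k$ demand exact degree $d_v = \lfloor\log_{\frac{m+1}{m}}n\rfloor + k$, while the remaining $a_{i^\prime}$ slots demand tail degree $\geq \lfloor\log_{\frac{m+1}{m}}n\rfloor + i^\prime$. Summing over ordered choices of $K$ distinct vertices from $[n]$ then gives
\[
\E{(X_{\geq i^\prime}^{(n)})_{a_{i^\prime}}\prod_{i\leq k<i^\prime}(X_k^{(n)})_{a_k}} = (n)_K\, \P{d_n(u)=d_u,\,d_n(v)\geq d_v,\,1\leq u\leq K^\prime<v\leq K}.
\]

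Second, I would apply the second statement of Lemma \ref{lemma:inclusion_exclusion_degree} to convert each exact-degree constraint into a difference of two tail constraints, obtaining the alternating sum
\[
\sum_{S\subset[K^\prime]}(-1)^{|S|}\,\P{d_n(v)\geq d_v + \mathbf{1}_{\{v\in S\}},\,v\in[K]}.
\]
Since $d_v + 1 < c\log n$ for $n$ large by hypothesis, Theorem \ref{theorem:joint_degree_distribution} applies uniformly to every summand and yields a factor $(m/(m+1))^{|S|+\sum_v d_v}(1+o(n^{-\alpha^\prime}))$ for some $\alpha^\prime > 0$. Pulling the common factor $(m/(m+1))^{\sum_v d_v}$ outside, the inner sum over $S$ collapses via the binomial identity
\[
\sum_{j=0}^{K^\prime}\binom{K^\prime}{j}\left(-\frac{m}{m+1}\right)^{j} = \left(\frac{1}{m+1}\right)^{K^\prime}.
\]

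To finish, I would consolidate the exponents. Using $(n)_K = n^K(1+o(n^{-1}))$ and $n^K = ((m+1)/m)^{K\log_{\frac{m+1}{m}}n}$, the net exponent of $m/(m+1)$ becomes $-K\log_{\frac{m+1}{m}}n + \sum_v d_v$, which splits block by block as $a_{i^\prime}(i^\prime-\varepsilon_n)+\sum_{i\leq k<i^\prime}a_k(k-\varepsilon_n)$ because $d_v-\log_{\frac{m+1}{m}}n = k-\varepsilon_n$ for every $v$ in block $k$. Distributing the $K^\prime$ factors of $1/(m+1)$ one per slot with $k < i^\prime$ then reconstructs the product on the right-hand side, and setting $\alpha = \min\{\alpha^\prime,1\}$ absorbs both error sources. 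The main obstacle is purely bookkeeping: the inclusion-exclusion has $2^{K^\prime}$ summands, a constant independent of $n$, so the multiplicative error $1+o(n^{-\alpha^\prime})$ propagates without degradation, but some care is needed to match the $\varepsilon_n$ shifts to the correct blocks when rearranging exponents, and to verify that the assumption $i^\prime + \log_{\frac{m+1}{m}}n < c\log n$ is precisely what is needed to keep every shifted threshold $d_v + 1$ within the regime where Theorem \ref{theorem:joint_degree_distribution} gives a uniform estimate.
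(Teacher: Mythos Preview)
Your proposal is correct and follows essentially the same approach as the paper: express the mixed factorial moment as $(n)_K$ times a joint probability, apply Lemma~\ref{lemma:inclusion_exclusion_degree} to reduce to tail events, invoke Theorem~\ref{theorem:joint_degree_distribution} uniformly on the $2^{K'}$ summands, collapse via the binomial identity, and then rearrange exponents using $(n)_K = n^K(1+o(n^{-1}))$ together with $d_v - \log_{\frac{m+1}{m}}n = k - \varepsilon_n$. Your choice $\alpha = \min\{\alpha',1\}$ and the bookkeeping remarks match the paper exactly.
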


\subsection{Large degree vertices}
\label{subsec:large_degree_vertices}

In this section, we prove Theorems \ref{theorem:poisson_convergence}, \ref{theorem:maximum_degree} and \ref{theorem:asymptotic_normality_smaller_degrees}, which are generalizations of the three main results in \cite{Addario.Eslava.2018}. Equipped with Proposition \ref{prop:factorial_moments}, the proofs of the theorems are very close to those in \cite{Addario.Eslava.2018}.
\begin{proof}[Proof of Theorem \ref{theorem:poisson_convergence}]
    To show weak convergence in $\mathcal{M}^\#_{\Z^*}$, due to Theorem 11.1.VII of \cite{Daley.2008}, it is sufficient to prove convergence of every finite family of bounded stochastic continuity sets (for a definition see e.g.\ page 143 in \cite{Daley.2008}). Let $\xi$ on $\Z$ be any point process. Then, we have that every bounded subset of $\Z$ with respect to the metric $d$ is a bounded continuity set for the underlying measure of $\xi$ in $\mathcal{M}^\#_{\Z^*}$, as $\Z$ is a discrete subset of $(\Z^*,d)$. In particular, for any $i\in\Z$, the set $\Z\cap[i,\infty)$ is a bounded stochastic continuity set. Any bounded stochastic continuity set can be recovered from suitable marginals of the joint distribution of $(\xi(i),\ldots,\xi(i^\prime-1),\xi[i^\prime,\infty))$ for some $i<i^\prime\in\Z$. Therefore, it suffices to prove the following:\par
    Let $\varepsilon\in[0,1]$, and let $(n_\ell)_{\ell\in\N}$ be an increasing sequence of integers satisfying $n_\ell\to\infty$ and $\varepsilon_{n_\ell}\to\varepsilon$ as $\ell\to\infty$. A possible choice is
    \begin{equation}
        n_\ell \coloneq \left(\frac{m+1}{m}\right)^{\ell+\varepsilon(1-\ell^{-1})}
    \end{equation}
    for each $\ell\in\N$. We show that, for any integers $i<i^\prime$, the joint distribution of
    \begin{equation}
        X_i^{(n_\ell)},\ldots,X_{i^\prime-1}^{(n_\ell)},X_{\geq i^\prime}^{(n_\ell)}
    \end{equation}
    converges to the joint distribution of
    \begin{equation}
        \mathcal{P}^\varepsilon\{i\},\ldots,\mathcal{P}^\varepsilon\{i^\prime -1\},\mathcal{P}^\varepsilon[i^\prime,\infty),
    \end{equation}
    that is, to the law of independent Poisson random variables with parameters
    \begin{equation}
        \frac{1}{m+1}\left(\frac{m}{m+1}\right)^{i-\varepsilon},\ldots,\frac{1}{m+1}\left(\frac{m}{m+1}\right)^{i^\prime-1-\varepsilon},\left(\frac{m}{m+1}\right)^{i^\prime-\varepsilon}.
    \end{equation}
    The independence of the variables is due to the disjointness of the sets $\{i\},\ldots,\{i^\prime-1\},[i^\prime,\infty)$, and the parameters are calculated analogous to \eqref{eq:integral_poisson_parameters}. As the Poisson distribution is determined by its moments (see for example Corollary 15.33 in \cite{Klenke.2020}), we can use the method of moments for convergence in distribution. First, we compute the limits of the factorial moments $X_i^{(n_\ell)},\ldots,X_{i^\prime-1}^{(n_\ell)},X_{\geq i^\prime}^{(n_\ell)}$. For any non-negative integers $a_i,\ldots,a_{i^\prime}$, by Proposition \ref{prop:factorial_moments}, there exists a constant $\alpha>0$ such that
    \begin{equation}
        \begin{split}
            &\E{(X_{\geq i^\prime}^{(n_\ell)})_{a_{i^\prime}}\prod_{i\leq k<i^\prime}(X_k^{(n_\ell)})_{a_k}}\\
            &= \left(\left(\frac{m}{m+1}\right)^{i^\prime-\varepsilon_{n_\ell}}\right)^{a_{i\prime}}\prod_{i\leq k<i^\prime}\left(\frac{1}{m+1}\left(\frac{m}{m+1}\right)^{k-\varepsilon_{n_\ell}}\right)^{a_k}(1+o(n_\ell^{-\alpha}))\\
            &\to \left(\left(\frac{m}{m+1}\right)^{i^\prime-\varepsilon}\right)^{a_{i\prime}}\prod_{i\leq k<i^\prime}\left(\frac{1}{m+1}\left(\frac{m}{m+1}\right)^{k-\varepsilon}\right)^{a_k},
        \end{split}
    \end{equation}
    as $\ell\to\infty$. On the other hand, we have
    \begin{equation}
        \E{(\mathcal{P}^\varepsilon[i^\prime,\infty))_{a_{i^\prime}}\prod_{i\leq k<i^\prime}(\mathcal{P}^\varepsilon(k))_{a_k}} = \left(\left(\frac{m}{m+1}\right)^{i^\prime-\varepsilon}\right)^{a_{i\prime}}\prod_{i\leq k<i^\prime}\left(\frac{1}{m+1}\left(\frac{m}{m+1}\right)^{k-\varepsilon}\right)^{a_k},
    \end{equation}
    since, for $r\in\N$ and $\mu>0$, the $r$-th factorial moment of a random variable $X\sim\text{Poi}(\mu)$ is given by $\E{(X)_r}=\mu^r$. Therefore, we conclude
    \begin{equation}
        \E{(X_{\geq i^\prime}^{(n_\ell)})_{a_{i^\prime}}\prod_{i\leq k<i^\prime}(X_k^{(n_\ell)})_{a_k}} \to \E{(\mathcal{P}^\varepsilon[i^\prime,\infty))_{a_{i^\prime}}\prod_{i\leq k<i^\prime}(\mathcal{P}^\varepsilon(k))_{a_k}},
    \end{equation}
    as $\ell\to\infty$. By applying Theorem 6.10 of \cite{Janson.Luczak.Rucinski.2000}, the proof is finished.
\end{proof}
\begin{proof}[Proof of Theorem \ref{theorem:maximum_degree}]
    Since $\{\Delta_n\geq \floor{\log_{\frac{m+1}{m}}n}+i_n\} = \{X_{\geq i_n}^{(n)}>0\}$, we only need to estimate $\prob(X_{\geq i_n}^{(n)}>0)$. We split the proof into two cases:\par
    \textbf{Case 1:} $i_n=O(1)$ \\
    It suffices to prove
    \begin{equation}
        1-\exp\left(-\left(\frac{m}{m+1}\right)^{i_n-\varepsilon_n}\right)-\prob(X_{\geq i_n}^{(n)}>0)=o\left(\exp\left(-\left(\frac{m}{m+1}\right)^{i_n-\varepsilon_n}\right)\right),
    \end{equation}
    which is equivalent to
    \begin{equation}
    \label{eq:delta_n_theorem_case_1}
        \prob(X_{\geq i_n}^{(n)}=0)-\exp\left(-\left(\frac{m}{m+1}\right)^{i_n-\varepsilon_n}\right)\to 0,
    \end{equation}
    as we have
    \begin{equation}
        \exp\left(-\left(\frac{m}{m+1}\right)^{i_n-\varepsilon_n}\right)=O(1).
    \end{equation}
    Now we show \eqref{eq:delta_n_theorem_case_1} by a proof by contradiction. Suppose that there exists $\delta>0$ and a subsequence $(n_k)_{k\in\N}$ for which
    \begin{equation}
    \label{eq:delta_n_theorem_assumption}
        \left|\prob(X_{\geq i_n}^{(n)}=0)-\exp\left(-\left(\frac{m}{m+1}\right)^{i_n-\varepsilon_n}\right)\right|>\delta.
    \end{equation}
    Since $(\varepsilon_{n_k})_{k\in\N}$ is bounded, there exists, due to the Bolzano-Weierstrass theorem, a subsubsequence $(n_{k_l})_{l\in\N}$ such that $\varepsilon_{n_{k_l}}\to\varepsilon$ for some $\varepsilon\in[0,1]$. Then, by Theorem \ref{theorem:poisson_convergence}, we have
    \begin{equation}
        \prob(X_{\geq i_n}^{(n_{k_l})}=0)\to\exp\left(-\left(\frac{m}{m+1}\right)^{i_n-\varepsilon}\right).
    \end{equation}
    However, this contradicts assumption \eqref{eq:delta_n_theorem_assumption}.\par
    \textbf{Case 2:} $i_n\to\infty$ with $i_n+\log_{\frac{m+1}{m}}n<c\log n$\\
    For $\prob(X_{\geq i_n}^{(n)}>0)$ we obtain the bounds
    \begin{equation}
    \label{eq:delta_n_theorem_upper_lower_bound}
        \frac{\left(\E{X_{\geq i_n}^{(n)}}\right)^2}{\E{\left(X_{\geq i_n}^{(n)}\right)^2}} \leq \prob(X_{\geq i_n}^{(n)}>0) = \prob(X_{\geq i_n}^{(n)}\geq 1) \leq \E{X_{\geq i_n}^{(n)}},
    \end{equation}
    where the upper bound follows from Markov's inequality and the lower bound is due to Paley-Zygmund's inequality (see for example Exercise 5.1.1 in \cite{Klenke.2020}). From Theorem \ref{theorem:poisson_convergence} we deduce
    \begin{equation}
        \E{X_{\geq i_n}^{(n)}} = \left(\frac{m}{m+1}\right)^{i_n-\varepsilon_n}(1+o(1))
    \end{equation}
    and
    \begin{equation}
        \E{\left(X_{\geq i_n}^{(n)}\right)^2} = \left(\frac{m}{m+1}\right)^{i_n-\varepsilon_n}\left(1+\left(\frac{m}{m+1}\right)^{i_n-\varepsilon_n}\right)(1+o(1)).
    \end{equation}
    Inserting this into \eqref{eq:delta_n_theorem_upper_lower_bound} and using $i_n\to\infty$, we conclude
    \begin{equation}
        \prob(X_{\geq i_n}^{(n)}>0) = \left(\frac{m}{m+1}\right)^{i_n-\varepsilon_n}(1+o(1)).
    \end{equation}
    The result follows from the observation
    \begin{equation}
        \left(\frac{m}{m+1}\right)^{i_n-\varepsilon_n} = \left(1-\exp\left(-\left(\frac{m}{m+1}\right)^{i_n-\varepsilon_n}\right)\right)(1+o(1)),
    \end{equation}
    which is due to the series expansion of the exponential function and, again, the fact that $i_n\to\infty$.
\end{proof}
\begin{proof}[Proof of Theorem \ref{theorem:asymptotic_normality_smaller_degrees}]
    We again use the method of moments for convergence in distribution. By Theorem 1.24 of \cite{Bollobas.2001}, it suffices to prove that
    \begin{equation}
        \label{eq:asymptotic_normality_method_moments_condition}
        \E{\left(X_{i_n}^{(n)}\right)_a}-\left(\frac{1}{m+1}\left(\frac{m}{m+1}\right)^{i_n-\varepsilon_n}\right)^a=o\left(\left((m+1)\left(\frac{m+1}{m}\right)^{i_n-\varepsilon_n}\right)^b\right),
    \end{equation}
    for all fixed $1\leq a\leq b$, as $n\to\infty$. Since $i_n=o(\log n)$, we have that
    \begin{equation}
        \label{eq:asymptotic_normality_parameter_estimation}
        (m+1)\left(\frac{m+1}{m}\right)^{i_n-\varepsilon_n} = (m+1)\left(\frac{m+1}{m}\right)^{-\varepsilon_n}n^{\left(\log\left(\frac{m+1}{m}\right)\right)^{-1}o(1)},
    \end{equation}
    and thus condition \eqref{eq:asymptotic_normality_method_moments_condition} simplifies to
    \begin{equation}
        \E{\left(X_{i_n}^{(n)}\right)_a}-\left(\frac{1}{m+1}\left(\frac{m}{m+1}\right)^{i_n-\varepsilon_n}\right)^a = o(n^{o(1)}).
    \end{equation}
    On the other hand, by Proposition \ref{prop:factorial_moments}, there exists, for any $a\geq1$, a constant $\alpha>0$ such that
    \begin{equation}
        \begin{split}
            \E{\left(X_{i_n}^{(n)}\right)_a}-\left(\frac{1}{m+1}\left(\frac{m}{m+1}\right)^{i_n-\varepsilon_n}\right)^a &= o\left(n^{-\alpha}\left(\frac{1}{m+1}\left(\frac{m}{m+1}\right)^{i_n-\varepsilon_n}\right)^a\right)\\
            &= n^{-\alpha+o(1)}\\
            &= o(n^{o(1)}),
        \end{split}
    \end{equation}
    where the second equality is due to \eqref{eq:asymptotic_normality_parameter_estimation}, and the proof is concluded.
\end{proof}

\newpage

\section{Properties of single vertices with given large degrees}

In this chapter, we present the proof of Theorem \ref{theorem:ungreedy_depth_single_vertex}, which is a generalization of a result from \cite{Lodewijks.2023} for RRTs, again using our adapted version of Kingman's coalescent with $m \in \N$. The general approach of the proof is based on \cite{Lodewijks.2023}, but certain parts are newly developed, since the analysis of the ungreedy depth in an RRDAG must be handled more carefully than the analysis of the depth in an RRT. In Section \ref{chapter:analysis_ungreedy_depth_vertex}, we perform some preliminary analysis on the ungreedy depth of vertices, and Section \ref{subsec:label_ungreedy_depth_theorem} contains the actual proof of Theorem \ref{theorem:ungreedy_depth_single_vertex}.

\subsection{Analysis of the ungreedy depth of a vertex}
\label{chapter:analysis_ungreedy_depth_vertex}

Let $n\in\N$. Our goal is to analyze the ungreedy depth of a fixed vertex $v\in[n]$ in Kingman's coalescent. The interchangeability of vertices in the coalescent by Corollary \ref{corollary:interchangeability_degrees} allows us to consider only the case $v=1$ without loss of generality. Recall that, for vertex 1, the corresponding sequence of connection sets is denoted by $\CC_n(1)$. For ease of writing, we omit the argument in the notation for the connection sets, the degree, label and ungreedy depth of vertex 1, and define $\CC_n\coloneq\CC_n(1),d_n\coloneq d_n(1),\ell_n\coloneq \ell_n(1),u_n\coloneq u_n(1)$. The following lemma shows that the main contribution to the ungreedy depth comes from steps $i\in\{2m,\ldots,n\}$, in which exactly 1 of the roots in $\CC_n^{(i)}$ is selected. To this end, we recall \eqref{eq:express_ungreedy_depth_in_terms_connection_sets} and define for $n\in\N$,
\begin{equation}
    \widetilde{u}_n \coloneq |\{2m\leq i \leq n\colon s_{w,i}=1 \text{ for exactly one }w \in \CC^{(i)}_n,w\text{ loses at step i}\}|.
\end{equation}
\begin{lemma}
    \label{lemma:ungreedy_depth_only_one_vertex_connections_set}
    Fix $a\in(0,m+1)$. Let $d,\ell,u=d(n),\ell(n),u(n)\in\N$ such that $d< a\log n$ for $n\in\N$, and let $(c_n)_{n\in\N}$ be any sequence diverging to infinity as $n\to\infty$. Then, we have the upper bound
    \begin{equation}
        \begin{split}
            \prob(u_n\leq u,\ell_n\geq\ell\,|\,d_n\geq d)\leq \prob(\widetilde{u}_n\leq u,\ell_n\geq\ell\,|\,d_n\geq d),
        \end{split}
    \end{equation}
    and the lower bound
    \begin{equation}
        \begin{split}
            \prob(u_n\leq u,\ell_n\geq\ell\,|\,d_n\geq d)\geq \prob(\widetilde{u}_n\leq u-c_n,\ell_n\geq\ell\,|\,d_n\geq d)+o(1) ,  
        \end{split}
    \end{equation}
    as $n\to\infty$.
\end{lemma}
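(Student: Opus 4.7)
The upper bound is immediate. Every step $i\geq 2m$ counted in $\widetilde{u}_n$---at which exactly one element of $\cC_n^{(i)}$ is selected and loses---is also a step at which some $w\in\cC_n^{(i)}$ loses, and hence contributes to the representation $u_n=\sum_{i=2}^n\sum_{w\in\cC_n^{(i)}}h_{w,i}$ from \eqref{eq:express_ungreedy_depth_in_terms_connection_sets}. Thus $\widetilde{u}_n\leq u_n$ deterministically, which gives the first inequality of the lemma.

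For the lower bound, set $R_n\coloneq u_n-\widetilde{u}_n\geq 0$. The difference $R_n$ can only receive contributions from the (at most $2m-2$) steps $i<2m$ and from those steps $i\geq 2m$ at which at least two elements of $\cC_n^{(i)}$ are selected while the loser lies in $\cC_n^{(i)}$; hence $R_n\leq(2m-2)+\sum_{i=2m}^n W_i$, where $W_i$ is the indicator of the latter ``bad'' event. Using the inclusion $\{u_n\leq u,\ell_n\geq\ell\}\supseteq\{\widetilde{u}_n\leq u-c_n,\ell_n\geq\ell,R_n\leq c_n\}$, the lower bound reduces to showing that $\prob(R_n>c_n\mid d_n\geq d)=o(1)$, and by Markov's inequality it suffices to verify that $\E{R_n\mid d_n\geq d}=O(1)$ uniformly in $n$.

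Let $T$ denote the step at which vertex $1$ first loses. Because $\cC_n^{(i)}=\{1\}$ for all $i\geq T$, the event $\{W_i=1\}$ is contained in $\{T>i\}$; and on $\{T>i\}$ every selection of vertex $1$ contributing to $d_n$ occurs at a step $>T>i$. Let $\mathcal{F}_{>i}$ denote the $\sigma$-algebra generated by the coalescent actions $(a_{j,\cdot},\xi_j)_{j>i}$. Both $\cC_n^{(i)}$ and the event $\{T>i,d_n\geq d\}$ are then $\mathcal{F}_{>i}$-measurable, whereas by Definition \ref{def:coalescent} the step-$i$ action is independent of $\mathcal{F}_{>i}$. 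A binomial count analogous to that preceding Proposition \ref{prop:tightness_tau_k} gives, conditionally on $|\cC_n^{(i)}|=c\leq m$, a bound of $\binom{c}{2}m(m+1)/(i(i-1))$ on the probability that at least two roots of $\cC_n^{(i)}$ are selected at step $i\geq 2m$, so $\prob(W_i=1\mid\mathcal{F}_{>i})\leq C/i^2$ for some constant $C=C(m)$. Consequently,
\begin{equation*}
    \prob(W_i=1,d_n\geq d)=\E{\prob(W_i=1\mid\mathcal{F}_{>i})\,\1{T>i,d_n\geq d}}\leq\frac{C}{i^2}\,\prob(d_n\geq d),
\end{equation*}
and summing over $i\geq 2m$ produces $\E{R_n\,\1{d_n\geq d}}\leq C'\,\prob(d_n\geq d)$, i.e.\ $\E{R_n\mid d_n\geq d}=O(1)$.

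The main obstacle is the conditioning itself. Since $\prob(d_n\geq d)$ is of order $(m/(m+1))^d$ and thus as small as $n^{-\Theta(1)}$ under the hypothesis $d<a\log n$, the unconditional bound $\prob(W_i=1)=O(1/i^2)$ alone gives no useful control on $\E{R_n\mid d_n\geq d}$: dividing by $\prob(d_n\geq d)$ would allow a polynomial blow-up in $n$. The key point is therefore the conditional-independence structure, namely the containment $\{W_i=1\}\subseteq\{T>i\}$ together with the $\mathcal{F}_{>i}$-measurability of $\{T>i,d_n\geq d\}$, which lets the factor $\prob(d_n\geq d)$ be absorbed cleanly and leaves only the summable tail $\sum i^{-2}$ behind.
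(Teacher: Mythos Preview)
Your proof is correct and takes a genuinely different route from the paper. The paper isolates the conditioning in a separate technical lemma (Lemma~\ref{lemma:only_one_vertex_connection_set_lower_bound_technical_calculation}), which shows $\prob(R_n>c_n,\,d_n\geq d)\leq(m/(m+1))^d\,\prob(R_n>c_n)$ via a negative-correlation argument between the indicator of vertex~1 winning its first $d$ die rolls and the indicator of $\{R_n>c_n\}$; it then handles the unconditional probability $\prob(R_n>c_n)$ with Borel--Cantelli. You bypass both ingredients by observing that each bad event $\{W_i=1\}$ forces $\{T>i\}$, that $\{T>i,\,d_n\geq d\}$ is $\mathcal{F}_{>i}$-measurable, and that the step-$i$ action is independent of $\mathcal{F}_{>i}$; this yields $\prob(W_i=1,\,d_n\geq d)\leq (C/i^2)\,\prob(d_n\geq d)$ term by term, so a first-moment bound and Markov suffice. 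Your approach is more self-contained (no auxiliary lemma, no correlation inequality, no Borel--Cantelli) and in fact gives the slightly stronger conclusion $\E{R_n\mid d_n\geq d}=O(1)$. The paper's decoupling, on the other hand, separates the degree conditioning from the depth analysis in a way that could be reused for other events in place of $\{R_n>c_n\}$.
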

Note that we later prove in Theorem \ref{theorem:ungreedy_depth_single_vertex} that, under suitable conditions, the upper and lower bound in Lemma \ref{lemma:ungreedy_depth_only_one_vertex_connections_set} converge to the same non-zero limit. For the proof of the lemma, we need the following technical calculation.
\begin{lemma}
    \label{lemma:only_one_vertex_connection_set_lower_bound_technical_calculation}
    Let $d=d(n)\in\N$, and let $(c_n)_{n\in\N}$ be any sequence. Then, we have
    \begin{equation}
        \prob(u_n-\widetilde{u}_n> c_n,d_n\geq d)\leq\left(\frac{m}{m+1}\right)^d \prob(u_n-\widetilde{u}_n> c_n),
    \end{equation}
    for each $n\in\N$.
\end{lemma}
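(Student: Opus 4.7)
The plan is to mirror the argument used in the proof of Lemma \ref{lemma:upper_bound_tail_vertex_degrees}. Let $\cA$ denote the family of $d$-subsets $A = \{a_1 > a_2 > \cdots > a_d\} \subseteq \{2,\ldots,n\}$. For each $A \in \cA$ I define $D_A$ to be the event that $S_n(1) \geq d$ with the first $d$ selections of vertex~$1$ being precisely $A$, and $W_A$ to be the event that vertex~$1$ wins at every step in $A$. By \eqref{eq:express_degree_via_selection_set}, $\{d_n \geq d\} = \bigsqcup_{A \in \cA}(D_A \cap W_A)$ as a disjoint union, so, setting $E = \{u_n - \widetilde{u}_n > c_n\}$, I start from
\begin{equation*}
\prob(E, d_n \geq d) = \sum_{A \in \cA} \prob(E, D_A, W_A).
\end{equation*}

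The key reduction is the per-$A$ bound $\prob(E, D_A, W_A) \leq (m/(m+1))^d \prob(E, D_A)$. Given this, summing over $A$ and using pairwise disjointness of $(D_A)_{A\in\cA}$ together with $\bigsqcup_A D_A = \{S_n(1) \geq d\}$ yields
\begin{equation*}
\prob(E, d_n \geq d) \leq \left(\frac{m}{m+1}\right)^d \prob(E, S_n(1) \geq d) \leq \left(\frac{m}{m+1}\right)^d \prob(E),
\end{equation*}
which is the statement of the lemma.

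To establish the per-$A$ bound, I rewrite it as $\prob(W_A \mid E, D_A) \leq (m/(m+1))^d$. The same conditioning argument as in Lemma \ref{lemma:upper_bound_tail_vertex_degrees}, specialized to $k=1$, shows that $\prob(W_A \mid D_A) = \prod_{i=1}^d (1 - 1/((m+1) \wedge a_i)) \leq (m/(m+1))^d$. Hence, by Bayes' rule it suffices to prove the negative-correlation statement
\begin{equation*}
\prob(E \mid D_A, W_A) \leq \prob(E \mid D_A).
\end{equation*}
The crucial observation is that, under $D_A$, the event $W_A$ is equivalent to $\{\ell_n(1) < a_d\}$ (with the convention $\ell_n(1) = 1$ when vertex~$1$ never loses): if vertex~$1$ wins at all of $a_1, \ldots, a_d$ it cannot have lost at any step in $A$, forcing its first losing step to lie at a later selection $< a_d$; while $W_A^c \cap D_A = \{\ell_n(1) \in A\}$. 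In particular, $\ell_n(1)$ is stochastically larger under $D_A$ than under $D_A \cap W_A$. I would therefore condition further on $\ell_n(1) = \ell$ together with the coalescent state $F_{\ell-1}$, under which the evolution at steps $< \ell$ becomes a fresh sub-coalescent independent of the data at steps $\geq \ell$, and prove that $\prob(E \mid D_A, \ell_n(1) = \ell)$ is monotone nondecreasing in $\ell$. Since $\CC_n^{(i)} = \{1\}$ for $i \geq \ell_n(1)$ and the connection set only starts to change at step $\ell_n(1)$, increasing $\ell$ lengthens the sub-coalescent and thereby gives the connection set more steps in which to transition in a way contributing to $u_n - \widetilde{u}_n$.

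The hard part will be making this monotonicity in $\ell$ fully rigorous: a coupling must align two sub-coalescents of different lengths and different initial states $\CC_n^{(\ell-1)}$ so as to preserve the indicator $\ind_{\{u_n - \widetilde{u}_n > c_n\}}$, and must also correctly account for the contribution of step $\ell$ itself (the $\ind_{\{\ell < 2m\}}$ term). Conceptually, however, the inequality merely reflects that each additional step available to the sub-coalescent gives the connection set another opportunity to transition in a manner contributing to $u_n - \widetilde{u}_n$.
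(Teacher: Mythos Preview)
Your approach is essentially the same as the paper's: both reduce the lemma to a negative-correlation statement between the indicators of $\{d_n\ge d\}$ and of $A_{c_n}^c=\{u_n-\widetilde u_n>c_n\}$, and both trace this back to the intuition that a longer initial winning streak of vertex~$1$ pushes $\ell_n$ down and hence shrinks the range of steps that can contribute to $u_n-\widetilde u_n$.

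The one structural difference is that the paper conditions on the \emph{full} selection set $\cS_n^-$ of vertex~$1$ rather than only on your $D_A$ (the first $d$ selections). Once $\cS_n^-$ is fixed, $f=\ind\{d_n\ge d\}$ is literally $\ind\{L\ge d\}$, a deterministic increasing function of the winning-streak length $L$, so the problem becomes a one-dimensional correlation inequality. For the opposite monotonicity of $g=\ind_{A_{c_n}^c}$, the paper gives a one-line justification: at every step of the first winning streak one has $\CC_n^{(i)}=\{1\}$, so those steps cannot contribute to $u_n-\widetilde u_n$; lengthening the streak therefore only removes potential contributions. From this it concludes
\[
\prob\!\bigl(A_{c_n}^c\cap\{\text{$1$ wins its first $d$ die rolls}\}\,\big|\,\cS_n^-\bigr)
\;\le\;
\Bigl(\tfrac{m}{m+1}\Bigr)^{d}\,\prob\!\bigl(A_{c_n}^c\,\big|\,\cS_n^-\bigr),
\]
and takes expectations.

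So you have correctly identified both the reduction and the crux. The coupling you flag as ``the hard part'' is exactly the monotonicity the paper asserts; the paper does not build an explicit coupling either, but treats it as evident from the observation just stated. Conditioning on all of $\cS_n^-$ (rather than only $D_A$) is the small simplification you are missing: it turns $f$ into a function of the single random variable $L$, so one can appeal directly to the negative correlation of an increasing and a decreasing function of $L$, bypassing your stochastic-ordering-of-$\ell_n$ detour.
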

We prove Lemma \ref{lemma:only_one_vertex_connection_set_lower_bound_technical_calculation} at the end of Section \ref{chapter:analysis_ungreedy_depth_vertex}, since we need to introduce some notation first, and instead, continue with the proof of Lemma \ref{lemma:ungreedy_depth_only_one_vertex_connections_set}.
\begin{proof}[Proof of Lemma \ref{lemma:ungreedy_depth_only_one_vertex_connections_set}]
    We directly obtain the upper bound, as we have $\widetilde{u}_n\leq u_n$ almost surely. For the lower bound, we define the event $A_{c_n}\coloneq\{u_n-\widetilde{u}_n\leq c_n\}$ and get
    \begin{equation}
    \label{eq:only_one_vertex_connection_set_lower_bound}
        \begin{split}
            \prob(u_n\leq u,\ell_n\geq\ell\,|\,d_n\geq d) &\geq \prob(A_{c_n},u_n\leq u,\ell_n\geq\ell\,|\,d_n\geq d)\\
            &\geq \prob(A_{c_n},\widetilde{u}_n\leq u-c_n,\ell_n\geq\ell\,|\,d_n\geq d).
        \end{split}
    \end{equation}
    To conclude the proof, it suffices to show $\prob(A_{c_n}^c)=o(1)$, since then, using Lemma \ref{lemma:only_one_vertex_connection_set_lower_bound_technical_calculation} and Theorem \ref{theorem:joint_degree_distribution},
    \begin{equation}
        \prob(A_{c_n}^c|d_n\geq d) = \frac{ \prob(A_{c_n}^c,d_n\geq d)}{\prob(d_n\geq d)} = \frac{\left(\frac{m}{m+1}\right)^do(1)}{\prob(d_n\geq d)} = o(1),
    \end{equation}
    which yields together with \eqref{eq:only_one_vertex_connection_set_lower_bound} the desired lower bound. We set
    \begin{equation}
        I_i^{(j)} \coloneq \ind\{\text{At step $i$, exactly $j$ roots in $\mathcal{C}_n^{(i)}$ selected and one of them loses}\},
    \end{equation}
    for $2\leq j\leq m$ and $2m\leq i\leq n$, and derive the upper bound
    \begin{equation}
        \prob(I_i^{(j)}=1) \leq\frac{j}{m+1}\frac{\binom{m}{j}\binom{i-m}{m+1-j}}{\binom{i}{m+1}}= O(i^{-j}),
    \end{equation}
    as $i\to\infty$, where the inequality is sharp if we consider the probability conditionally on the event $\{\ell_n>i\}$. Using the fact that in steps $2m-1,\ldots,2$ of the coalescent the ungreedy depth increases at most $2m-2$ times, we deduce
    \begin{equation}
    \label{eq:lower_bound_more_than_one_Selected_ungreedy_depth}
        \begin{split}
            \prob(A_{c_n}) &\geq \prob\left(2m-2+\sum_{j=2}^m\sum_{i=2m}^nI_i^{(j)}\leq c_n\right).
        \end{split}
    \end{equation}
    Since $\sum_{j=2}^m\sum_{i=2m}^\infty\prob(I_i^{(j)}=1)<\infty$, the Borell-Cantelli lemma yields
    \begin{equation}
        \prob\left(\lim_{n\to\infty}\sum_{j=2}^m\sum_{i=2m}^nI_i^{(j)}<\infty\right)=1,
    \end{equation}
    which implies
    \begin{equation}
        \prob\left(\sum_{j=2}^m\sum_{i=2m}^nI_i^{(j)}\leq c_n\right)\geq1-o(1).
    \end{equation}
    Therefore by \eqref{eq:lower_bound_more_than_one_Selected_ungreedy_depth}, we have
    \begin{equation}
        \prob(A_{c_n})\geq1-o(1),
    \end{equation}
    concluding the proof.
\end{proof}

Lemma \ref{lemma:ungreedy_depth_only_one_vertex_connections_set} simplifies the analysis of the ungreedy depth of a fixed vertex $v\in[n]$ for $n\in\N$. The remaining difficulty lies in the fact that, before vertex 1 loses, we increase its ungreedy depth with probability $1/i$ at step $i$. After vertex 1 loses a die roll, we increase its ungreedy depth with probability roughly $m/i$ at step $i$. For the analysis of the degree of a vertex in Section \ref{subsec:asymptotic_joint_degree_distribution}, we have used that the number of selections of the vertex is independent of the outcome of the corresponding die rolls. This is no longer the case if we want to count the selections of vertices in the connection sets, and therefore the selection mechanism for the analysis of the ungreedy depth must be designed differently.\par
We calculate the selection probability for vertices in the connection set of vertex 1 for steps before and after $\ell_n$. For $i\in\{2m,\ldots,n\}$, we set
\begin{equation}
    \begin{split}
        p_i^-&\coloneq\prob(\text{Exactly one root in $\mathcal{C}_n^{(i)}$ selected at step $i$}\,|\,\ell_n\leq i)\\
        &= \frac{\binom{1}{1}\binom{i-1}{m}}{\binom{i}{m+1}} = \frac{m+1}{i},
    \end{split}
\end{equation}
as $n\to\infty$. Additionally, for $i\in\{2m,\ldots,n-1\}$, we set
\begin{equation}
    \begin{split}
        p_i^+&\coloneq\prob(\text{Exactly one root in $\mathcal{C}_n^{(i)}$ selected at step $i$}\,|\,\ell_n>i)\\
        &=\frac{\binom{m}{1}\binom{i-m}{m}}{\binom{i}{m+1}} = \frac{m(m+1)}{i}+r_i,
    \end{split}
\end{equation}
with $r_i=O(i^{-2})$ as $i\to\infty$, and $p_n^+\coloneq ((m+1)m)/n$.
Based on these observations, we define, for vertex 1, the two independent sequences of random variables $(s^-_{1,i})_{i\in\{m+1,\ldots,n\}}$ and $(s^+_{1,i})_{i\in\{m+1,\ldots,n\}}$ with $s^-_{1,i}\sim\text{Ber}(p_i^-)$ and $s^+_{1,i}\sim\text{Ber}(p_i^+)$ for $2m\leq i\leq n$. Define furthermore, analogous to \eqref{eq:definition_selection_set}, the sets,  
\begin{equation}
    \mathcal{S}^-_n \coloneq \{i \in \{m+1,\ldots,n\}\colon s^-_{1,i} = 1\},\quad\mathcal{S}^+_n \coloneq \{i \in \{m+1,\ldots,n\}\colon s^+_{1,i} = 1\}.
\end{equation}
If we describe these sets in words, $\mathcal{S}^-_n$ counts the number of selections of vertices in the connection sets of vertex 1 as if it had not lost yet, and $\mathcal{S}^+_n$ counts these selections as if vertex 1 had lost already. This, of course, is no longer based on the actual coalescent, but we can use it as a distributional tool for the analysis of the ungreedy depth.\\
As we have $s^-_{1,i}\equalsd s_{1,i}$ for $2m\leq i\leq n$, and therefore $\mathcal{S}^-_n\equalsd\mathcal{S}_n$ with $\mathcal{S}_n$ as in \eqref{eq:definition_selection_set}, we can describe the degree $d_n$ and label $\ell_n$ in the same way as before in \eqref{eq:express_degree_via_selection_set} and \eqref{eq:express_label_in_terms_selection_sets}, since these quantities are determined by the coalescent process up to the step that vertex 1 loses. However, for the ungreedy depth, we first consider $\mathcal{S}^-_n$ to count the selections of vertices in the connection sets of vertex 1 at steps $n,\ldots,\ell_n(v)$, and switch to $\mathcal{S}^+_n$ for the steps $\ell_n-1,\ldots,2m$. This approach ensures that the selection of vertices for the ungreedy depth is decoupled from the changing point of the cardinalities of the connection sets.\par
We finish this section by proving Lemma \ref{lemma:only_one_vertex_connection_set_lower_bound_technical_calculation}.
\begin{proof}[Proof of Lemma \ref{lemma:only_one_vertex_connection_set_lower_bound_technical_calculation}]
    Recall from the proof of Lemma \ref{lemma:ungreedy_depth_only_one_vertex_connections_set}, the definitions of $A_{c_n}$ and $I_i^{(j)}$ for $2\leq j\leq m$ and $2m\leq i\leq n$.
    Then, it follows
    \begin{equation}
        A_{c_n}^c\leq\left\{2m-2+\sum_{j=2}^m\sum_{i=2m}^nI_i^{(j)}> c_n\right\}=\left\{2m-2+\sum_{j=2}^m\sum_{i=2m}^{\ell_n-1}I_i^{(j)}> c_n\right\}.
    \end{equation}
    By the tower property and \eqref{eq:express_degree_via_selection_set}, rewrite
    \begin{equation}
    \label{eq:only_one_vertex_connection_set_lower_bound_technical_calculation_tower_property}
        \begin{split}
             \prob(A_{c_n}^c,d_n\geq d) &= \E{\prob(A_{c_n}^c,d_n\geq d\,|\,\cS_n^-)}\\
             &= \E{\ind_{\{|\cS_n^-|\geq d\}}\prob(A_{c_n}^c\cap\{\text{1 wins its first $d$ die rolls}\}\,|\,\mathcal{S}_n^-)},
        \end{split}
    \end{equation}
    and define the indicators
    \begin{equation}
        f(\omega)\coloneq\ind\{\text{1 wins its first $d$ die rolls}\}(\omega),\quad g(\omega)\coloneq\ind_{A_{c_n}^c}(\omega),
    \end{equation}
    for any realization $\omega$ of Kingman's coalescent. Now, we consider two arbitrary realizations $\omega^\prime,\omega^{\prime\prime}$ of Kingman's coalescent with $|\cS_n^-|(\omega^\prime)\geq d$ and $|\cS_n^-|(\omega^{\prime\prime})\geq d$, where $\omega^{\prime\prime}$ has a longer first streak of die roll wins of vertex 1 than $\omega^{\prime}$. Then, we deduce $f(\omega^{\prime})\leq f(\omega^{\prime\prime})$ and $g(\omega^{\prime})\geq g(\omega^{\prime\prime})$, since the steps during the first streak of die roll wins of 1 do not contribute to the difference between $u_n$ and $\widetilde{u}_n$. Consequently, the indicators $f,g$ are negatively correlated, and we derive an upper bound for \eqref{eq:only_one_vertex_connection_set_lower_bound_technical_calculation_tower_property} by
    \begin{equation}
        \begin{split}
            &\E{\ind_{\{|\cS_n^-|\geq d\}}\prob(A_{c_n}^c\,|\,\mathcal{S}_n^-)\prob(\text{1 wins its first $d$ die rolls}\,|\,\mathcal{S}_n^-)}\\
            &=\left(\frac{m}{m+1}\right)^d\E{\ind_{\{|\cS_n^-|\geq d\}}\prob(A_{c_n}^c\,|\,\mathcal{S}_n^-)}\leq\left(\frac{m}{m+1}\right)^d\prob(A_{c_n}^c),
        \end{split}
    \end{equation}
    as desired.
\end{proof}

\subsection{Label and ungreedy depth of a vertex with given large degree}
\label{subsec:label_ungreedy_depth_theorem}

Equipped with the tools developed in the previous section, we are now ready to prove Theorem \ref{theorem:ungreedy_depth_single_vertex}. For better readability, we have outsourced part of the proof to the following proposition.
\begin{proposition}
\label{prop:calculation_results_general_central_limit_theorem_ungreedy_depth}
    Fix $a\in[0,m+1)$. Let $d\in\N_0$ diverge as $n\to\infty$ such that $\lim_{n\to\infty}d/\log n=a$. Furthermore, let $x,y\in\R$, and let $C,\varepsilon>0$ be such that $C/\varepsilon\in\N$. By $M,N$ we denote two independent standard normal random variables, and independently, let $G_m$ be a geometric random variable with parameter $(m+1)^{-1}$. We define
    \begin{equation}
        \begin{split}
            &u\coloneq\left(m\log n-\frac{md}{m+1}\right)+y\sqrt{m\log n-\frac{md}{(m+1)^2}},\\
            &\ell\coloneq n\exp\left(-\frac{d}{m+1}+x\sqrt{\frac{d}{(m+1)^2}}\right),
        \end{split}
    \end{equation}    
    and, for all $j\in\{0,1,\ldots,C/\varepsilon\}$,
    \begin{equation}
        \begin{split}
            \ell_j &\coloneq \ell\exp\left(j\varepsilon\sqrt{\frac{d}{(m+1)^2}}\right),\\
            \cE_j &\coloneq \{|\cS^-_n\cap[\ell_j,n]|\geq d+G_m,|\cS^-_n\cap[\ell_{j+1},n]|< d+G_m\},\\
            \widetilde{\cE}_j &\coloneq \{|\cS^-_n\cap[\ell_j,n]|\geq d+G_m\},\\
            Y_{n,j}&\sim\operatorname{Bin}\left(|[2m,\ell_j)\cap\cS_n^+|,\frac{1}{m+1}\right).
        \end{split}
    \end{equation}
    Then, it follows
    \begin{equation}    
        \begin{split}
            \lim_{n\to\infty}\prob\left(Y_{n,j}\leq u-1\right) &= \prob\left(N\sqrt{1-\frac{ma}{(m+1)^2-a}}+(x+j\varepsilon)\sqrt{\frac{ma}{(m+1)^2-a}}\leq y\right),\\
            \lim_{n\to\infty}\prob(\cE_j) &= \prob(M\in(x+j\varepsilon,x+(j+1)\varepsilon)),\\
            \lim_{n\to\infty}\prob(\widetilde{\cE}_\frac{C}{\varepsilon}) &= \prob(M>x+C).
        \end{split}
    \end{equation}
\end{proposition}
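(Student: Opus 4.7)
The plan is to identify each of the three probabilities as governed by the central limit theorem for sums of independent Bernoulli random variables with diverging variances. In each case the Lyapunov condition holds trivially (bounded summands, diverging variance), so the main task is to compute the asymptotic means and variances using the harmonic-sum estimate $\sum_{i=a}^{b}1/i = \log(b/a) + O(1/a)$ together with $d = a\log n + o(\log n)$, and then identify the limiting quantile through careful arithmetic.

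For the first limit, I would write $Y_{n,j} = \sum_{i\in[2m,\ell_j)} \eta_i\, s^+_{1,i}$, where $(\eta_i)$ are i.i.d.\ Bernoullis of parameter $1/(m+1)$ independent of $(s^+_{1,i})$; this exhibits $Y_{n,j}$ as a sum of independent Bernoullis with parameters $p_i^+/(m+1) = m/i + O(i^{-2})$, whence
\begin{equation}
\E{Y_{n,j}} = m\log\ell_j + O(1), \qquad \Var Y_{n,j} = m\log\ell_j + O(1),
\end{equation}
and the standardized $Y_{n,j}$ converges to $N\sim\cN(0,1)$. Substituting $m\log\ell_j = m\log n - md/(m+1) + m(x+j\varepsilon)\sqrt{d}/(m+1) + O(1)$ into the definition of $u$, the limit of $(u-1-\E{Y_{n,j}})/\sqrt{\Var Y_{n,j}}$ works out to
\begin{equation}
\frac{y\sqrt{(m+1)^2-a} - (x+j\varepsilon)\sqrt{ma}}{\sqrt{(m+1)((m+1)-a)}},
\end{equation}
and the identity $(m+1)^2 - a - ma = (m+1)((m+1)-a)$ shows that this is exactly the threshold associated to the event $N\sqrt{1-ma/((m+1)^2-a)} + (x+j\varepsilon)\sqrt{ma/((m+1)^2-a)} \leq y$.

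For the second and third limits, I would treat $|\cS^-_n \cap [\ell_j,n]|$ as a sum of independent Bernoullis with parameters $p_i^- = (m+1)/i$, so its mean is $(m+1)\log(n/\ell_j) + O(1) = d - (x+j\varepsilon)\sqrt{d} + O(1)$ and its variance has the same leading order, yielding
\begin{equation}
W_{n,j} \coloneq \frac{|\cS^-_n \cap [\ell_j,n]| - d + (x+j\varepsilon)\sqrt{d}}{\sqrt{d}} \toindis M \sim \cN(0,1).
\end{equation}
The crucial point is that $W_{n,j} - W_{n,j+1} = (|\cS^-_n \cap [\ell_j,\ell_{j+1})| - \varepsilon\sqrt{d})/\sqrt{d}$ has variance of order $\varepsilon/\sqrt{d}\to 0$, so $W_{n,j}$ and $W_{n,j+1}$ converge jointly to the same limit $M$. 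Since $G_m$ is bounded in probability, $G_m/\sqrt{d}\to 0$, and the event $\cE_j = \{W_{n,j} \geq (x+j\varepsilon) + G_m/\sqrt{d}\} \cap \{W_{n,j+1} < (x+(j+1)\varepsilon) + G_m/\sqrt{d}\}$ converges by the portmanteau theorem to $\{M \in (x+j\varepsilon, x+(j+1)\varepsilon)\}$, with the boundary carrying no mass by continuity of $M$; $\widetilde{\cE}_{C/\varepsilon}$ is handled identically by dropping the upper constraint.

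The main obstacle is bookkeeping: checking that all the $O(1)$ error terms in the mean and variance computations, the vanishing offset $G_m/\sqrt{d}$, and the coupling error $W_{n,j} - W_{n,j+1}$ decay fast enough relative to the respective standardizations $\sqrt{d}$ and $\sqrt{m\log n}$ so that none of them contributes to the limit. Once this is controlled, matching the exact form of the quantile in the first limit reduces to the identity $(m+1)^2 - a - ma = (m+1)((m+1)-a)$ together with routine simplification.
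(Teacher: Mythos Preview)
Your proposal is correct, and for the second and third limits it matches the paper's argument essentially step for step (the paper phrases the coupling of $W_{n,j}$ and $W_{n,j+1}$ via the Kolmogorov strong law for $|\cS_n^-\cap[\ell_j,\ell_{j+1})|/\sqrt{d}\to\varepsilon$, but this is equivalent to your vanishing-variance observation).

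For the first limit, however, your route is genuinely simpler than the paper's. The paper keeps the two layers of randomness separate: it first applies Lindeberg--Feller to $Q_n^{(j)}=|[2m,\ell_j)\cap\cS_n^+|$, then upgrades this to almost-sure convergence via Skorokhod's representation theorem, and finally decomposes the standardized $Y_{n,j}$ into three pieces $A_n+B_n+C_n$ corresponding to the binomial fluctuation given $Q_n^{(j)}$, the fluctuation of $Q_n^{(j)}$ itself, and a deterministic drift. This produces a limit of the form $N''\sqrt{m(m+1-a)/((m+1)^2-a)}+N'\sqrt{(m+1-a)/((m+1)^2-a)}+(x+j\varepsilon)\sqrt{ma/((m+1)^2-a)}$ with two independent normals, which are then collapsed into one. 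Your thinning trick $Y_{n,j}=\sum_i\eta_i s^+_{1,i}$ flattens the randomized binomial into a single sum of independent Bernoullis with parameters $p_i^+/(m+1)=m/i+O(i^{-2})$, so a single application of the CLT suffices and no Skorokhod coupling is needed. The paper's decomposition has the advantage of making visible how the two sources of randomness in $Y_{n,j}$ contribute separately, which is conceptually informative, but your approach reaches the answer with less machinery.
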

We first provide the proof of the theorem and then the proof of the proposition.
\begin{proof}[Proof of Theorem \ref{theorem:ungreedy_depth_single_vertex}]
    Let $x,y\in\R$, and define $u$, $\ell$ as in Proposition \ref{prop:calculation_results_general_central_limit_theorem_ungreedy_depth}. Then, \eqref{eq:joint_behavior_ungreedy_label_conditional_degree_version_in_theorem} is equivalent to
    \begin{equation}
        \label{eq:joint_behavior_ungreedy_label_conditional_degree}
        \begin{split}
            &\lim_{n\to\infty}\prob(u_n\leq u,\ell_n\geq\ell\,|\,d_n\geq d)\\
            &= \prob\left(M\sqrt{\frac{ma}{(m+1)^2-a}}+N\sqrt{1-\frac{ma}{(m+1)^2-a}}\leq y,M>x\right).
        \end{split}
    \end{equation}
    We divide the proof of \eqref{eq:joint_behavior_ungreedy_label_conditional_degree} into an upper bound and a lower bound. We prove the upper bound first, and can then recover most of the steps for the lower bound.\par
    \textbf{Upper bound. }By an application of the first part of Lemma \ref{lemma:ungreedy_depth_only_one_vertex_connections_set}, we derive
    \begin{equation}
        \label{eq:begin_upper_bound_theorem_ungreedy_depth}
        \begin{split}
            \prob(u_n\leq u,\ell_n\geq\ell\,|\,d_n\geq d) &\leq \prob(\widetilde{u}_n\leq u,\ell_n\geq\ell\,|\,d_n\geq d)\\
            &= \frac{\prob(\widetilde{u}_n\leq u,\ell_n\geq\ell,d_n\geq d)}{\prob(d_n\geq d)}.
        \end{split}
    \end{equation}
    Set $I_n\coloneq\{i\in[\ell,n]\colon|\cS^-_n\cap[i,n]|\geq d+G_m\}\cap\cS_n^+$, and let $G_m$ be a geometric random variable with parameter $(m+1)^{-1}$ independent of everything else. For all $n\in\N$, we define the two independent random variables $X_n,Y_n$ as
    \begin{equation}
        X_{n}\sim\text{Bin}\left(|I_n|-1,\frac{1}{m+1}\right),\quad Y_n\sim\text{Bin}\left(|\cS^+_n\cap[2m,\ell)|,\frac{1}{m+1}\right),
    \end{equation}
    conditionally on $\cS_n^-,\cS_n^+,G_m$. For the event $\{d_n\geq d,\ell_n\geq\ell\}$ to occur, $\cS_n^-\cap(\ell,n]$ must contain at least $d$ elements, vertex 1 has to win its first $d$ die rolls, and then needs to lose a die roll before step $\ell$. The random variable $G_m$ counts, after vertex 1 already won its first $d$ die rolls, the number of die rolls of 1 until it loses a die roll. Therefore, we can rewrite the event $\{d_n\geq d,\ell_n\geq\ell\}$ as $\{d_n\geq d\}\cap\{G_m\leq|\cS^-_n\cap[\ell,n]|-d\}$. Based on this, we can furthermore write $\widetilde{u}_n$ as sum of the two binomial random variables $X_n$ and $Y_n$. Here, $X_n$ counts the contribution to $\widetilde{u}_n$ in steps $\ell_n,\ldots,\ell$, since $|I_n|$ is the number of selections of vertex 1 between the step it has lost (that means using $\cS^+_n$) and step $\ell$, given $\{G_m\leq|\cS^-_n\cap[\ell,n]|-d\}$. On the other hand, $Y_n$ counts the contribution to $\widetilde{u}_n$ in steps $\ell-1,\ldots,2m$. The tower property and the fact that $\{G_m\leq|\cS^-_n\cap[\ell,n]|-d\}$ is measurable with respect to $\cS_n^-,G_m$ yields
    \begin{equation}
        \label{eq:properties_vertices_condition_tower_property}
        \begin{split}
                &\prob(d_n\geq d,\ell_n\geq\ell,\widetilde{u}_n\leq u)\\
                &=\E{\prob\left(d_n\geq d,G_m\leq|\cS^-_n\cap[\ell,n]|-d,X_n+Y_n\leq u-1\,|\,\cS_n^-,\cS_n^+,G_m\right)}\\
                &=\E{\ind_{\{G_m\leq|\cS^-_n\cap[\ell,n]|-d\}}\prob\left(d_n\geq d,X_n+Y_n\leq u-1\,|\,\cS_n^-,\cS_n^+,G_m\right)},
        \end{split}
    \end{equation}
    where we have changed the upper bound $u$ to $u-1$ as, at step $\ell_n$, we are guaranteed that $\widetilde{u}_n$ increases. Since $\{|\cS_n^-\cap(\ell,n]|\geq d\}$ occurs if the indicator in \eqref{eq:properties_vertices_condition_tower_property} is 1, we only need to consider if 1 wins its first $d$ die rolls. This happens with probability $\left(\frac{m}{m+1}\right)^d$, and the outcome of the die rolls is independent of everything else in \eqref{eq:properties_vertices_condition_tower_property}. Consequently, it follows
    \begin{equation}
        \begin{split}
            &\E{\ind_{\{G_m\leq|\cS^-_n\cap[\ell,n]|-d\}}\prob\left(d_n\geq d,X_n+Y_n\leq u-1\,|\,\cS_n^-,\cS_n^+,G_m\right)}\\
            &=\left(\frac{m}{m+1}\right)^d\E{\ind_{\{G_m\leq|\cS^-_n\cap[\ell,n]|-d\}}\prob\left(X_n+Y_n\leq u-1\,|\,\cS_n^-,\cS_n^+,G_m\right)}.
        \end{split}
    \end{equation}
    Combining this with \eqref{eq:properties_vertices_condition_tower_property} and \eqref{eq:begin_upper_bound_theorem_ungreedy_depth}, as well as the fact that, by Theorem \ref{theorem:joint_degree_distribution},
    \begin{equation}
        \prob(d_n\geq d) = \left(\frac{m}{m+1}\right)^d(1+o(1)),
    \end{equation}
    we conclude that
    \begin{equation}
        \label{eq:begin_upper_bound_completed_theorem_ungreedy_depth}
        \begin{split}
            &\prob(u_n\leq u,\ell_n\geq\ell\,|\,d_n\geq d)\\
            &\leq \E{\ind_{\{G_m\leq|\cS^-_n\cap[\ell,n]|-d\}}\prob\left(X_n+Y_n\leq u-1\,|\,\cS_n^-,\cS_n^+,G_m\right)}(1+o(1)).
        \end{split}
    \end{equation}
    To analyze the limit of the expectation on the right-hand side of \eqref{eq:begin_upper_bound_completed_theorem_ungreedy_depth}, we split the indicator into a sum of indicators. Let $C,\varepsilon>0$ be such that $C/\varepsilon\in\N$. As in Proposition \ref{prop:calculation_results_general_central_limit_theorem_ungreedy_depth}, for all $j\in\{0,1,\ldots,C/\varepsilon\}$, we set 
    \begin{equation}
        \label{eq:definition_l_j_E_j}
        \begin{split}
            \ell_j &\coloneq \ell\exp\left(j\varepsilon\sqrt{\frac{d}{(m+1)^2}}\right),\\
            \cE_j &\coloneq \{|\cS^-_n\cap[\ell_j,n]|\geq d+G_m,|\cS^-_n\cap[\ell_{j+1},n]|< d+G_m\},\\
            \widetilde{\cE}_j &\coloneq \{|\cS^-_n\cap[\ell_j,n]|\geq d+G_m\}.
        \end{split}
    \end{equation}
    Then, we can partition the event in the indicator in \eqref{eq:begin_upper_bound_completed_theorem_ungreedy_depth} by
    \begin{equation}
         \{|\cS^-_n\cap[\ell,n]|\geq d+G_m\} = \widetilde{\cE}_0 = \left(\bigcup_{j=0}^{\frac{C}{\varepsilon}-1}\cE_j\right)\cup\widetilde{\cE}_{\frac{C}{\varepsilon}},
    \end{equation}
    and obtain
    \begin{equation}
        \label{eq:sum_decomposition_l_j_ungreedy_depth}
        \begin{split}
            &\E{\ind_{\{|\cS^-_n\cap[\ell,n]|\geq d+G_m\}}\prob\left(X_n+Y_n\leq u-1\,|\,\cS_n^-,\cS_n^+,G_m\right)}\\
            &=\sum_{j=0}^{\frac{C}{\varepsilon}-1}\E{\ind_{\cE_j}\prob\left(X_n+Y_n\leq u-1\,|\,\cS_n^-,\cS_n^+,G_m\right)}\\
            &\quad\,\,\,\,\,+\E{\ind_{\widetilde{\cE}_\frac{C}{\varepsilon}}\prob\left(X_n+Y_n\leq u-1\,|\,\cS_n^-,\cS_n^+,G_m\right)}.
        \end{split}
    \end{equation}
    We now determine the limit of each term in the sum. Let $j\in\{0,1,\ldots,C/\varepsilon\}$, and define the independent random variables $X_{n,j},Y_{n,j}$ as
    \begin{equation}
    \label{eq:definition_X_nj_and_Y_nj}
        X_{n,j}\sim\text{Bin}\left(|[\ell,\ell_j)\cap\cS_n^+|,\frac{1}{m+1}\right),\quad Y_{n,j}\sim\text{Bin}\left(|[2m,\ell_j)\cap\cS_n^+|,\frac{1}{m+1}\right),
    \end{equation}
    conditionally on $\cS_n^+$. Then, on the event $\cE_j$, the random variable $X_n$ stochastically dominates $X_{n,j}$, and we thus obtain
    \begin{equation}
        \E{\ind_{\cE_j}\prob\left(X_n+Y_n\leq u-1\,|\,\cS_n^-,\cS_n^+,G_m\right)}\leq \E{\ind_{\cE_j}\prob\left(X_{n,j}+Y_n\leq u-1\,|\,\cS_n^+\right)},
    \end{equation}
    where we could omit the conditioning on $\cS_n^-$ and $G_m$, since $X_{n,j}$ and $Y_n$ are independent of these random variables. Furthermore, as the event $\cE_j$ concerns different steps in the coalescent than $X_{n,j}$ and $Y_n$, we get
    \begin{equation}
        \begin{split}
             \E{\ind_{\cE_j}\prob\left(X_{n,j}+Y_n\leq u-1\,|\,\cS_n^+\right)} &= \prob(\cE_j)\prob\left(X_{n,j}+Y_n\leq u-1\right)\\
             &= \prob(\cE_j)\prob\left(Y_{n,j}\leq u-1\right).
        \end{split}
    \end{equation}
    We define the set $A_j\coloneq(x+j\varepsilon,x+(j+1)\varepsilon)$, and recover from Proposition \ref{prop:calculation_results_general_central_limit_theorem_ungreedy_depth} that
    \begin{equation}
    \label{eq:calculation_results_general_central_limit_theorem_ungreedy_depth}
        \begin{split}
            \lim_{n\to\infty}\prob\left(Y_{n,j}\leq u-1\right) &= \prob\left(N\sqrt{1-\frac{ma}{(m+1)^2-a}}+(x+j\varepsilon)\sqrt{\frac{ma}{(m+1)^2-a}}\leq y\right),\\
            \lim_{n\to\infty}\prob(\cE_j) &= \prob(M\in(x+j\varepsilon,x+(j+1)\varepsilon))=\prob(M\in A_j),\\
            \lim_{n\to\infty}\prob(\widetilde{\cE}_\frac{C}{\varepsilon}) &= \prob(M>x+C).
        \end{split}
    \end{equation}
    Using these results, the limit superior of the sum in \eqref{eq:sum_decomposition_l_j_ungreedy_depth} can be bounded from above by
    \begin{equation}
        \label{eq:upper_bound_sum_limit_ungreedy_depth}
        \begin{split}
            &\sum_{j=0}^{\frac{C}{\varepsilon}-1}\prob(M\in A_j)\prob\left(N\sqrt{1-\frac{ma}{(m+1)^2-a}}+(x+j\varepsilon)\sqrt{\frac{ma}{(m+1)^2-a}}\leq y\right)\\
            &\leq\sum_{j=0}^{\frac{C}{\varepsilon}-1}\prob\left(M\in A_j,N\sqrt{1-\frac{ma}{(m+1)^2-a}}+(M-\varepsilon)\sqrt{\frac{ma}{(m+1)^2-a}}\leq y\right)\\
            &=\prob\left(M\in (x,x+C),N\sqrt{1-\frac{ma}{(m+1)^2-a}}+(M-\varepsilon)\sqrt{\frac{ma}{(m+1)^2-a}}\leq y\right),
        \end{split}
    \end{equation}
    where the first step follows from the fact that $M$ and $N$ are independent and, on the event $M\in A_j$,
    \begin{equation}
        x+j\varepsilon=x+(j+1)\varepsilon-\varepsilon\geq M-\varepsilon.
    \end{equation}
    Combining \eqref{eq:begin_upper_bound_completed_theorem_ungreedy_depth}, \eqref{eq:sum_decomposition_l_j_ungreedy_depth} and \eqref{eq:upper_bound_sum_limit_ungreedy_depth}, we conclude
    \begin{equation}
    \label{eq:upper_bound_limsup_ungreedy_depth}
        \begin{split}
            &\limsup_{n\to\infty}\prob(u_n\leq u,\ell_n\geq\ell\,|\,d_n\geq d)\\
            &\leq\prob\left(M\in (x,x+C),N\sqrt{1-\frac{ma}{(m+1)^2-a}}+(M-\varepsilon)\sqrt{\frac{ma}{(m+1)^2-a}}\leq y\right)\\
            &\quad\,+\prob(M\geq x+C).
        \end{split}
    \end{equation}
    Since we can take $C$ arbitrarily large and $\varepsilon$ arbitrarily small, the proof of the upper bound is finished.\par
    
    \textbf{Lower bound. }We reuse the notation introduced for the upper bound. Take $(c_n)_{n\in\N}$ to be some divergent sequence diverging to infinity, such that $c_n=o(\sqrt{\log n})$ as $n\to\infty$. By applying the second part of Lemma \ref{lemma:ungreedy_depth_only_one_vertex_connections_set}, we obtain analogous to  \eqref{eq:begin_upper_bound_completed_theorem_ungreedy_depth},
    \begin{equation}
        \begin{split}
            &\prob(u_n\leq u,\ell_n\geq\ell\,|\,d_n\geq d)\\
            &\geq \E{\ind_{\{G_m\leq|\cS^-_n\cap[\ell,n]|-d\}}\prob\left(X_n+Y_n\leq u-c_n\,|\,\cS_n^-,\cS_n^+,G_m\right)}(1+o(1)).
        \end{split}
    \end{equation}
    Again, we split the expectation into a sum and analyze the limit of each summand as in \eqref{eq:sum_decomposition_l_j_ungreedy_depth}. For $j\in\{0,1,\ldots,C/\varepsilon\}$, we deduce
    \begin{equation}
        \label{eq:begin_lower_bound_one_summand_ungreedy_depth}
        \E{\ind_{\cE_j}\prob\left(X_n+Y_n\leq u-c_n\,|\,\cS_n^-,\cS_n^+,G_m\right)}\geq \E{\ind_{\cE_j}\prob\left(X_{n,j+1}+Y_n\leq u-c_n\,|\,\cS_n^+\right)},
    \end{equation}
    since, on the event $\cE_j$, $X_n$ is stochastically dominated by $X_{n,j+1}$. We define the random variable $\widetilde{X}_{n,j}$ as
    \begin{equation}
        \widetilde{X}_{n,j}\sim\text{Bin}\left(|[\ell_j,\ell_{j+1})\cap\cS_n^+|,\frac{1}{m+1}\right),
    \end{equation}
    conditionally on $\cS_n^+$. Then, we have $X_{n,j+1}=X_{n,j}+\widetilde{X}_{n,j}$ and rewrite \eqref{eq:begin_lower_bound_one_summand_ungreedy_depth} as
    \begin{equation}
        \begin{split}
            &\E{\ind_{\cE_j}\prob\left(X_{n,j}+\widetilde{X}_{n,j}+Y_n\leq u-c_n\,|\,\cS_n^+\right)}\\
            &\geq\E{\ind_{\cE_j}\prob\left(\cD_j\cap\{X_{n,j}+\widetilde{X}_{n,j}+Y_n\leq u-c_n\}\,|\,\cS_n^+\right)}\\
            &\geq\E{\ind_{\cE_j}\prob\left(\cD_j\cap\{X_{n,j}+Y_n\leq u-\widetilde{x}-c_n\}\,|\,\cS_n^+\right)},\\
        \end{split}
    \end{equation}
    where $\cD_j\coloneq\{\widetilde{X}_{n,j}\leq\widetilde{x}\}$ with $\widetilde{x}\coloneq2m\varepsilon\sqrt{d}$. We can almost surely bound $\widetilde{X}_{n,j}$ from above by $\widehat{X}_{n,j}\coloneq|[\ell_j,\ell_{j+1})\cap\cS_n^+|$, and calculate
    \begin{equation}
        \begin{split}
            \E{\widehat{X}_{n,j}}&=\sum_{i=\ell_j}^{\ell_{j+1}-1}\left(\frac{m(m+1)}{i}+r_i\right)=m\varepsilon\sqrt{d}+O(1),\\
            \Var\left[\widehat{X}_{n,j}\right]&=\sum_{i=\ell_j}^{\ell_{j+1}-1}\left(\frac{m(m+1)}{i}+r_i\right)\left(1-\frac{m(m+1)}{i}-r_i\right)=m\varepsilon\sqrt{d}+O(1),
        \end{split}
    \end{equation}
    recalling $r_i=O(i^{-2})$. Since $\widehat{X}_{n,j}$ can be viewed as a sum of independent Bernoulli random variables with diverging variance, it is concentrated around its mean with fluctuations of order $\sqrt{\Var\left[\widehat{X}_{n,j}\right]}$, due to the Lindeberg-Feller theorem (for a more detailed reasoning, see the proof of Proposition \ref{prop:calculation_results_general_central_limit_theorem_ungreedy_depth} later). Therefore, we conclude $\prob(\widetilde{X}_{n,j}\leq\widetilde{x})\geq\prob(\widehat{X}_{n,j}\leq\widetilde{x})\to1$ as $n\to\infty$, and derive
    \begin{equation}
        \begin{split}
            &\E{\ind_{\cE_j}\prob\left(\cD_j\cap\{X_{n,j}+Y_n\leq u-\widetilde{x}-c_n\}\,|\,\cS_n^+\right)}\\
            &\geq\E{\ind_{\cE_j}\prob\left(\cD_j\,|\,\cS_n^+\right)\prob\left(X_{n,j}+Y_n\leq u-\widetilde{x}-c_n\,|\,\cS_n^+\right)}\\
            &=\prob(\cE_j)\prob\left(X_{n,j}+Y_n\leq u-\widetilde{x}-c_n\right)+o(1)=\prob(\cE_j)\prob\left(Y_{n,j}\leq u-\widetilde{x}-c_n\right)+o(1),
        \end{split}
    \end{equation}
    where we use that the events $\cE_j,\cD_j$ depend on different steps in the coalescent than $X_{n,j},Y_n$ and recall $Y_{n,j}$ from \eqref{eq:definition_X_nj_and_Y_nj}. Analogous to Proposition \eqref{prop:calculation_results_general_central_limit_theorem_ungreedy_depth}, we obtain
    \begin{equation}
        \begin{split}
            &\lim_{n\to\infty}\prob\left(Y_{n,j}\leq u-\widetilde{x}-c_n\right)\\
            &= \prob\left(N\sqrt{1-\frac{ma}{(m+1)^2-a}}+(x+j\varepsilon)\sqrt{\frac{ma}{(m+1)^2-a}}\leq y-2\varepsilon\sqrt{\frac{ma}{(m+1)^2-a}}\right).
        \end{split}
    \end{equation}
    Consequently,
    \begin{equation}
        \liminf_{n\to\infty}\sum_{j=0}^{\frac{C}{\varepsilon}-1}\E{\ind_{\cE_j}\prob\left(X_n+Y_n\leq u-c_n\,|\,\cS_n^-,\cS_n^+,G_m\right)}
    \end{equation}
    can be bounded from below by
    \begin{equation}
        \begin{split}
            &\sum_{j=0}^{\frac{C}{\varepsilon}-1}\prob(M\in A_j)\prob\left(N\sqrt{1-\frac{ma}{(m+1)^2-a}}+(x+j\varepsilon+2\varepsilon)\sqrt{\frac{ma}{(m+1)^2-a}}\leq y\right)\\
            &\geq\sum_{j=0}^{\frac{C}{\varepsilon}-1}\prob\left(M\in A_j,N\sqrt{1-\frac{ma}{(m+1)^2-a}}+(M+2\varepsilon)\sqrt{\frac{ma}{(m+1)^2-a}}\leq y\right)\\
            &=\prob\left(M\in (x,x+C),N\sqrt{1-\frac{ma}{(m+1)^2-a}}+(M+2\varepsilon)\sqrt{\frac{ma}{(m+1)^2-a}}\leq y\right).
        \end{split}
    \end{equation}
    To conclude, we derive for the lower bound
    \begin{equation}
        \begin{split}
            &\liminf_{n\to\infty}\prob(u_n\leq u,\ell_n\geq\ell\,|\,d_n\geq d)\\
            &\geq \prob\left(M\in (x,x+C),N\sqrt{1-\frac{ma}{(m+1)^2-a}}+(M+2\varepsilon)\sqrt{\frac{ma}{(m+1)^2-a}}\leq y\right),
        \end{split}
    \end{equation}
    and combine it with \eqref{eq:upper_bound_limsup_ungreedy_depth}, which yields the correct limit, since we can take $C$ arbitrarily large and $\varepsilon$ arbitrarily small.
\end{proof}
\begin{proof}[Proof of Proposition \ref{prop:calculation_results_general_central_limit_theorem_ungreedy_depth}]
    We only prove the first and second result, since the proof of the third result is analogous to the second. Let $(I_p^n)_{p\in[n],n\in\N}$ be a sequence of i.i.d.\ Bernoulli random variables with parameter $(m+1)^{-1}$, and let $Q_n^{(j)}\coloneq|[2m,\ell_j)\cap\cS_n^+|=\sum_{i=2m}^{\ell_j-1}s_{1,i}^+$. Then, we have
    \begin{equation}
        Y_{n,j}=\sum_{p=1}^{Q_n^{(j)}}I_p^{Q_n^{(j)}}.
    \end{equation}
    For $Q_n^{(j)}$, Lindeberg's condition (see Theorem 15.4.4 in \cite{Klenke.2020}) is given by
    \begin{equation}
        \forall\varepsilon>0\colon\lim_{n\to\infty}\frac{1}{\Var\left[Q_n^{(j)}\right]}\sum_{i=2m}^{\ell_j-1}\E{\left(s_{1,i}^+-\E{s_{1,i}^+}\right)^2\ind_{\left\{\left(s_{1,i}^+-\E{s_{1,i}^+}\right)^2>\varepsilon^2\Var\left[Q_n^{(j)}\right]\right\}}}=0.
    \end{equation}
    Fix any $\varepsilon>0$. As $\lim_{n\to\infty}\Var\left[Q_n^{(j)}\right]=\infty$, we have that all summands vanish for $n\in\N$ large enough. So, Lindeberg's condition is met. Consequently, we deduce
    \begin{equation}
    \label{eq:cardinality_selection_set_central_limit_theorem_ungreedy_depth}
        \frac{Q_n^{(j)}-\E{Q_n^{(j)}}}{\Var\left[Q_n^{(j)}\right]}\toindis N^\prime,
    \end{equation}
    with $N^\prime$ being a standard normal random variable. Recalling the definition of $\ell_j$ in \eqref{eq:definition_l_j_E_j} and the fact that $r_i=O(i^{-2})$, calculate
    \begin{equation}
    \label{eq:mean_variance_cardinality_selection_sets_ungreedy_depth}
        \begin{split}
            \E{Q_n^{(j)}}&=\sum_{i=2m}^{\ell_{j}-1}\left(\frac{m(m+1)}{i}+r_i\right) = m(m+1)\log n-md+(x+j\varepsilon)m\sqrt{d}+O(1),\\
            \Var\left[Q_n^{(j)}\right]&=\sum_{i=2m}^{\ell_{j}-1}\left(\frac{m(m+1)}{i}+r_i\right)\left(1-\frac{m(m+1)}{i}-r_i\right)\\
            &= m(m+1)\log n-md+(x+j\varepsilon)m\sqrt{d}+O(1).\\
        \end{split}
    \end{equation}
    By Skorokhod's representation theorem (see Theorem 6.7 in \cite{Billingsley.1999}), there exists a probability space and an independent coupling of $(Q_n^{(j)})_{n\in\N}$ and $(I_p^n)_{p\in[n],n\in\N}$ such that the convergence in \eqref{eq:cardinality_selection_set_central_limit_theorem_ungreedy_depth} is almost sure rather than in distribution. In particular, we have $Q_n^{(j)}/(m(m+1)\log n-md)\toas1$ and $Q_n^{(j)}\toas\infty$. Additionally, this representation yields
    \begin{equation}
        \frac{(m+1)\sum_{p=1}^nI_p^n-n}{\sqrt{mn}}\toas N^{\prime\prime},
    \end{equation}
    as $n\to\infty$, where $N^{\prime\prime}$ is a standard normal random variable independent of $N^\prime$ in \eqref{eq:cardinality_selection_set_central_limit_theorem_ungreedy_depth}. Then, we can rewrite
    \begin{equation}
        \frac{(m+1)\sum_{p=1}^{Q_n^{(j)}}I_p^{Q_n^{(j)}}-(m(m+1)\log n-md)}{\sqrt{m(m+1)^2\log n-md}}=A_n+B_n+C_n,
    \end{equation}
    with
    \begin{equation}
        \begin{split}
            &A_n\coloneq\frac{(m+1)\sum_{p=1}^{Q_n^{(j)}}I_p^{Q_n^{(j)}}-Q_n^{(j)}}{\sqrt{mQ_n^{(j)}}}\sqrt{\frac{Q_n^{(j)}}{m(m+1)\log n-md}}\sqrt{\frac{m^2(m+1)\log n-m^2d}{m(m+1)^2\log n-md}},\\
            &B_n\coloneq\frac{Q_n^{(j)}-\E{Q_n^{(j)}}}{\sqrt{\Var\left[Q_n^{(j)}\right]}}\sqrt{\frac{\Var\left[Q_n^{(j)}\right]}{m(m+1)\log n-md}}\sqrt{\frac{m(m+1)\log n-md}{m(m+1)^2\log n-md}},\\
            &C_n\coloneq\frac{\E{Q_n^{(j)}}-(m(m+1)\log n-md)}{m\sqrt{d}}\sqrt{\frac{m^2d}{m(m+1)^2\log n-m d}}.
        \end{split}
    \end{equation}
    Combining this with the Skorokhod representation, the fact that almost sure convergence is preserved under products, $d/\log n\to a$ and \eqref{eq:cardinality_selection_set_central_limit_theorem_ungreedy_depth}, \eqref{eq:mean_variance_cardinality_selection_sets_ungreedy_depth}, yields
    \begin{equation}
        \begin{split}
            &\frac{(m+1)\sum_{p=1}^{Q_n^{(j)}}I_p^{Q_n^{(j)}}-(m(m+1)\log n-md)}{\sqrt{m(m+1)^2\log n-md}}\\
            &\toindis N^{\prime\prime}\sqrt{\frac{m(m+1-a)}{(m+1)^2-a}}+N^\prime\sqrt{\frac{m+1-a}{(m+1)^2-a}}+(x+j\varepsilon)\sqrt{\frac{ma}{(m+1)^2-a}},
        \end{split}
    \end{equation}
    where the three terms correspond to the distributional limits of $A_n,B_n,C_n$ in that order. Using that $u=m\log n-md/(m+1)+y\sqrt{m\log n-md/(m+1)^2}$, we obtain
    \begin{equation}
        \begin{split}
            &\lim_{n\to\infty}\prob\left(\sum_{p=1}^{Q_n^{(j)}}I_p^{Q_n^{(j)}}\leq u-1\right)\\
            &=\prob\left(N^{\prime\prime}\sqrt{\frac{m(m+1-a)}{(m+1)^2-a}}+N^\prime\sqrt{\frac{m+1-a}{(m+1)^2-a}}+(x+j\varepsilon)\sqrt{\frac{ma}{(m+1)^2-a}}\leq y\right)\\
            &= \prob\left(N\sqrt{1-\frac{ma}{(m+1)^2-a}}+(x+j\varepsilon)\sqrt{\frac{ma}{(m+1)^2-a}}\leq y\right),
        \end{split}
    \end{equation}
    where $N$ is a standard normal random variable. We continue by proving the second result in \eqref{eq:calculation_results_general_central_limit_theorem_ungreedy_depth}. Let $\widetilde{Q}_n^{(j+1)}\coloneq|[\ell_{j+1},n]\cap\cS_n^-|=\sum_{i=\ell_{j+1}}^{n}s_{1,i}^-$ and $Q_n^{j,j+1}\coloneq|[\ell_j,\ell_{j+1})\cap\cS_n^-|=\sum_{i=\ell_j}^{\ell_{j+1}}s_{1,i}^-$. Then, we have
    \begin{equation}
        \cE_j=\{\widetilde{Q}_n^{(j+1)}<d+G_m,\widetilde{Q}_n^{(j+1)}+Q_n^{j,j+1}\geq d+G_m\}
    \end{equation}
    and
    \begin{equation}
        \begin{split}
            \E{Q_n^{j,j+1}}&=\varepsilon\sqrt{d}+O(1),\quad \Var\left[Q_n^{j,j+1}\right]=\varepsilon\sqrt{d}+O(1),\\
            \E{\widetilde{Q}_n^{(j+1)}}&=d-(x+(j+1)\varepsilon)\sqrt{d}+O(1),\\
            \Var\left[\widetilde{Q}_n^{(j+1)}\right]&=d-(x+(j+1)\varepsilon)\sqrt{d}+O(1).
        \end{split}
    \end{equation}
    Again due to the Lindeberg-Feller theorem, we derive
    \begin{equation}
        \widetilde{Q}_{n,\text{std}}^{(j+1)}\coloneq\frac{\widetilde{Q}_n^{(j+1)}-\E{\widetilde{Q}_n^{(j+1)}}}{\sqrt{\Var\left[\widetilde{Q}_n^{(j+1)}\right]}}\toindis M,
    \end{equation}
    where $M$ is a standard normal random variable. Together with $Q_n^{j,j+1}/\sqrt{d}\toas\varepsilon$ by the Kolmogorov strong law (see e.g.\ Chapter 10.7 in \cite{Feller.1968}) and $G_m=O(1)$, we conclude that $\prob(\cE_j)$ equals
    \begin{equation}
        \begin{split}
            &\prob\left(\widetilde{Q}_{n,\text{std}}^{(j+1)}< \frac{d+G_m-\E{\widetilde{Q}_n^{(j+1)}}}{\sqrt{\Var\left[\widetilde{Q}_n^{(j+1)}\right]}},\widetilde{Q}_{n,\text{std}}^{(j+1)}+\frac{Q_n^{j,j+1}}{\sqrt{\Var\left[\widetilde{Q}_n^{(j+1)}\right]}}\geq\frac{d+G_m-\E{\widetilde{Q}_n^{(j+1)}}}{\sqrt{\Var\left[\widetilde{Q}_n^{(j+1)}\right]}}\right)\\
            &=\prob\left(\widetilde{Q}_{n,\text{std}}^{(j+1)} < x+(j+1)\varepsilon+o(1),\widetilde{Q}_{n,\text{std}}^{(j+1)}+\frac{Q_n^{j,j+1}}{\sqrt{\Var\left[\widetilde{Q}_n^{(j+1)}\right]}}\geq x+(j+1)\varepsilon+o(1)\right)\\
            &\to\prob(M<x+(j+1)\varepsilon, M\geq x+j\varepsilon).
        \end{split}
    \end{equation}
    This matches the second result in Proposition \ref{prop:calculation_results_general_central_limit_theorem_ungreedy_depth}, and thus, the proof is complete.
\end{proof}

\newpage

\section{Properties of multiple vertices with given large degrees}

This chapter analyzes the behavior of multiple vertices conditional on their degrees in Kingman's coalescent with $m\in\N$. In particular, we present in Sections \ref{subsec:selection_sets_multiple_vertices} some preliminary results on the selection sets of multiple vertices and in Section \ref{subsec:labels_multiple_vertices} the proof of Theorem \ref{theorem:label_multiple_vertices_conditional_degrees}. The theorem is generalized from \cite{Lodewijks.2023}, and we also reuse some notation introduced there.

\subsection{Selection sets of multiple vertices}
\label{subsec:selection_sets_multiple_vertices}

Similar to Section \ref{subsec:asymptotic_joint_degree_distribution}, we consider the selection sets $(\cS_n(v))_{v\in[k]}$ and the associated die rolls to analyze the behavior of label and degree of the vertices $1,\ldots,k$. To deal with the correlations between these selection sets, we have defined the random variable $\tau_k$ in \eqref{eq:definition_tau_k} such that the selection sets of the vertices $1,\ldots,k$ are disjoint up to step $\tau_k$. Let $(t_n)_{n\in\N}$ be any integer-valued sequence that diverges to infinity as $n\to\infty$. Due to the tightness of $(\tau_k)_{n\in\N}$ derived in Proposition \ref{prop:tightness_tau_k}, we know that $\prob(\tau_k<t_n)=1-o(1)$. Therefore, for $t_n\leq n$, it is sufficient to work with the \textbf{truncated selection sets}
\begin{equation}
    \cS_{n,1}(v)\coloneq\{i\in\Omega_1\colon s_{v,i}=1\},v\in[k]
\end{equation}
with $\Omega_1\coloneq\{t_n,\ldots,n\}$. This helps to avoid correlations and simplifies the analysis. We refer to $(t_n)_{n\in\N}$ as the \textbf{truncation sequence} and write for the truncated selection sets $\overline{\cS}_{n,1}\coloneq(\cS_{n,1}(v))_{v\in[k]}$. Also, we define $\overline{J}\coloneq(J_v)_{v\in[k]}$ with $J_v\in\cP(\Omega_1)$ for $v\in[k]$, where $\cP(.)$ denotes the power set. The following lemma taken from \cite{Lodewijks.2023} shows that the evolutions of degree and label of vertices $1,\ldots,k$ are independent if we consider disjoint selection sets. In \cite{Lodewijks.2023}, the lemma is formulated in terms of RRTs but can be immediately generalized to RRDAGs. 
\begin{lemma}[Lemma 4.4, \cite{Lodewijks.2023}]
    \label{lemma:decouple_label_degree_multiple_vertices}
    Fix $k\in\N$ and consider a truncation sequence $(t_n)_{n\in\N}$ such that $t_n\leq n$ for all $n\in\N$. Let $n\in\N$ and, for $v\in[k]$, let $d_v\in\N_0$, $\ell_v\in\Omega_1$. Furthermore, let $\overline{J}\in{\cP(\Omega_1)}^k$ such that the sets $J_1,\ldots,J_k$ are pairwise disjoint. Then,
    \begin{equation}
        \begin{split}
            &\prob(\ell_n(v)\geq \ell_v,d_n(v)\geq d_v,v\in[k]\,|\,\overline{\cS}_{n,1}=\overline{J})\\
            &= \prod_{v=1}^k\prob(\ell_n(v)\geq \ell_v,d_n(v)\geq d_v\,|\,\cS_{n,1}(v)=J_v).
        \end{split}
    \end{equation}
\end{lemma}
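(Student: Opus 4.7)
The plan is to show that under the disjointness hypothesis each event $A_v:=\{\ell_n(v)\geq \ell_v,\,d_n(v)\geq d_v\}$ depends on the coalescent only through the loss-indicator block $(h_{v,i})_{i\in J_v}$, and that these $k$ blocks are conditionally independent with product-Bernoulli marginals matching those obtained under the single-vertex conditioning.

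I first reduce $A_v$. Because $\ell_v\in\Omega_1$, any realization in $A_v$ has its first loss (the largest $i$ with $h_{v,i}=1$) in $[\ell_v,n]\subseteq\Omega_1$; since every selection step of $v$ inside $\Omega_1$ lies in $J_v$, this first loss must be in $J_v$. Together with $d_n(v)\geq d_v$, this forces the $(d_v+1)$-st largest element of $\cS_n(v)$ to lie in $\Omega_1$, and since every element of $J_v$ strictly exceeds every element of $\cS_n(v)\setminus J_v\subset[2,t_n-1]$, this in turn forces $|J_v|\geq d_v+1$. If $|J_v|\leq d_v$ for some $v$ both sides of the claimed identity vanish, so we may assume $|J_v|\geq d_v+1$ for every $v\in[k]$. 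Under this assumption $A_v\cap\{\cS_{n,1}(v)=J_v\}$ coincides with the deterministic event $\{(h_{v,i})_{i\in J_v}\in B_v\}$, where $B_v\subseteq\{0,1\}^{J_v}$ requires the top $d_v$ elements of $J_v$ to take value $0$ and some element of $J_v\cap[\ell_v,n]$ to take value $1$.

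For the joint law of the blocks I exploit the decomposition of the coalescent randomness into the independent families $(a_{i,\cdot})_{i\geq 2}$ and $(\xi_i)_{i\geq 2}$ from Definition \ref{def:coalescent}. At each step $i\in\Omega_1$ with $i\in J_v$, disjointness of $\overline{J}$ makes $v$ the unique element of $[k]$ whose tree is selected, and $h_{v,i}=\ind\{\xi_i=p_v(i)\}$, where $p_v(i)$ is the position of $v$'s tree among the $(m+1)\wedge i$ selected trees. The $\xi_i$'s are independent of the $a$-family, and the conditioning $\overline{\cS}_{n,1}=\overline{J}$ is invariant under relabelings of the non-$[k]$ vertices: only the shape of the tree structure throughout $\Omega_1$ is fixed, not which specific non-$[k]$ root loses at each step. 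Hence the $\xi_i$'s remain conditionally independent and uniform on $\{1,\dots,(m+1)\wedge i\}$ under the conditioning. Each $h_{v,i}$ with $i\in J_v$ is therefore Bernoulli$(1/((m+1)\wedge i))$, and these variables are mutually independent both across $i$ (by independence of the $\xi_i$'s) and across $v$ (since by disjointness each step contributes to at most one block).

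The same argument applied to the single-vertex conditioning $\cS_{n,1}(v)=J_v$ shows that $(h_{v,i})_{i\in J_v}$ has the identical product-Bernoulli law. Combining block-wise independence with this marginal agreement and writing $A_v=\{(h_{v,i})_{i\in J_v}\in B_v\}$ yields
\begin{equation*}
\prob\bigl(A_1,\dots,A_k\,\big|\,\overline{\cS}_{n,1}=\overline{J}\bigr)=\prod_{v=1}^{k}\prob\bigl((h_{v,i})_{i\in J_v}\in B_v\bigr)=\prod_{v=1}^{k}\prob\bigl(A_v\,\big|\,\cS_{n,1}(v)=J_v\bigr),
\end{equation*}
completing the proof. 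The main obstacle is the invariance claim in the third paragraph: since tree structures evolve via the $\xi_i$'s, the event $\overline{\cS}_{n,1}=\overline{J}$ is not literally a function of the $a$-family alone, and one must argue carefully that the identity of the non-$[k]$ root losing at each step is immaterial for the subsequent selection indicators of $[k]$-vertices, so that a symmetry over non-$[k]$ labels effectively decouples the $\xi_i$'s from the conditioning.
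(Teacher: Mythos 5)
A preliminary remark: the thesis does not prove this lemma itself — it quotes it from \cite{Lodewijks.2023}, where it is proved for RRTs ($m=1$), and asserts that the generalization to RRDAGs is immediate. So your proposal is being judged on its own merits rather than against a written proof in the paper.

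Your first reduction is correct and cleanly done: since $\ell_v\in\Omega_1$, the event $\{\ell_n(v)\geq\ell_v,\,d_n(v)\geq d_v\}$ intersected with $\{\cS_{n,1}(v)=J_v\}$ is exactly the event that the top $d_v$ elements of $J_v$ carry $h_{v,\cdot}=0$ while some element of $J_v\cap[\ell_v,n]$ carries $h_{v,\cdot}=1$, and both sides of the identity vanish unless $|J_v|\geq d_v+1$. The gap is precisely where you flag it in your last paragraph, and it is a genuine gap rather than a routine verification — indeed it is the one place where the $m=1$ argument does not transfer verbatim. For $m=1$ the merge at step $i$ replaces the two selected trees by their union regardless of which one loses, so the tree process, and hence the event $\overline{\cS}_{n,1}=\overline{J}$, is measurable with respect to the $a$-family alone and is therefore genuinely independent of $(\xi_i)_i$; conditioning on the selection sets then manifestly leaves the die rolls i.i.d.\ uniform. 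For $m\geq2$ this fails: the losing tree is removed while each of the $m$ winning trees absorbs its vertices, so the collection of trees at all later steps — in particular which tree is $t^{(j)}(w)$ for a vertex $w$ that has already lost — depends on $\xi_i$, and $\overline{\cS}_{n,1}=\overline{J}$ is correlated with the $\xi_i$'s.

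Your proposed resolution, invariance of the conditioning under relabelings of $[n]\setminus[k]$, does not close this. First, $t^{(i)}(w)$ is the \emph{lexicographically first} tree containing $w$; once $w$ belongs to several trees (which happens for $m\geq2$ as soon as $w$'s root has lost), this tie-break is label-dependent, so permuting the labels of $[n]\setminus[k]$ need not preserve $s_{w,i}$ and hence need not preserve the conditioning event. Second, the $m+1$ selected trees at a given step are not exchangeable with respect to the tracked vertices — one of them may contain some $w\in[k]$ as a non-first tree while another does not — so uniformity of the conditional law of $\xi_i$ is not restored by symmetry over non-$[k]$ labels alone. What is actually needed is a backward-in-$i$ sequential conditioning argument showing that, given the state at step $i$ and the constraint $\{s_{w,i}=\ind\{i\in J_w\},\,w\in[k]\}$, the conditional probability of all remaining constraints depends on $\xi_i$ only through the (at most one, by disjointness) indicator $h_{v,i}$ with $i\in J_v$ — or a reformulation of the selection sets that is manifestly $\xi$-independent. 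In fairness, the thesis is silent on exactly this point as well; but as written, your third paragraph names the obstacle without overcoming it, so the proof is incomplete.
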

Next, we state some results about the truncated selection sets that allow us to apply Lemma \ref{lemma:decouple_label_degree_multiple_vertices} in the analysis of the behavior of multiple fixed vertices.\\
For $\delta\in(0,m+1)$ and $\overline{d}=(d_i)_{i\in[k]}\in\Z^k$, we introduce the notation
\begin{equation}
    \begin{split}
        \cA_{\overline{d}}\coloneq\{&\overline{J}\in{\cP(\Omega_1)}^k\colon \prob(\overline{\cS}_{n,1}=\overline{J},d_n(v)\geq d_v,v\in[k])>0\},\\
        \cB_{n,\delta}\coloneq\{&\overline{J}\in{\cP(\Omega_1)}^k\colon (J_1,\ldots,J_k)\text{ pairwise disjoint and }\\
        &||J_v|-(m+1)\log n|\leq\delta\log n,v\in[k]\}.
    \end{split}
\end{equation}
In words, $\cA_{\overline{d}}$ consists of all possible outcomes of the truncated selection sets that enable the event $\{d_n(v)\geq d_v,v\in[k]\}$, and $\cB_{n,\delta}$ contains all truncated selection sets which enable the decoupling of label and degree of the vertices $1,\ldots,k$ as follows from Lemma \ref{lemma:decouple_label_degree_multiple_vertices}.\\
We now present some results related to the sets $\cA_{\overline{d}}$ and $\cB_{n,\delta}$ from \cite{Eslava.2021}. Though we already implicitly used a different truncation sequence for the proofs of Lemma \ref{lemma:tail_bound_truncated_selection_set} and Theorem \ref{theorem:joint_degree_distribution}, it suffices for our purposes to consider only the case $t_n\coloneq\ceil{(\log n)^2}$ in the following three lemmas. In \cite{Eslava.2021}, the results are formulated in terms of RRTs, but they can be generalized in a straightforward way. The following lemma is a version of a result for the truncation sequence $t_n\coloneq\ceil{(\log n)^2}$ that we already implicitly proved in Section \ref{subsec:asymptotic_joint_degree_distribution} for the case $t_n\coloneq\ceil{n^{\delta/(m+1)}}$.
\begin{lemma}[Lemma 3.1, \cite{Eslava.2021}]
    \label{lemma:disjoint_selection_sets_enable_event}
    Let $\delta\in(0,m+1)$ and let $t_n\coloneq\ceil{(\log n)^2}$. If $\overline{d}=(d_i)_{i\in[k]}\in\N_0^k$ satisfies $d_i<(m+1-\delta)\log n$ for all $i\in[k]$, then $\cB_{n,\delta}\subset\cA_{\overline{d}}$.
\end{lemma}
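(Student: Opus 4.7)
The plan is to exhibit, for each fixed $\overline{J} \in \cB_{n,\delta}$, a single concrete realization of Kingman's $(m,n)$-coalescent that lies in the joint event $\{\overline{\cS}_{n,1} = \overline{J}\} \cap \{d_n(v) \geq d_v,\ v \in [k]\}$ and then to show that this realization has strictly positive probability. Since the $J_v$ are pairwise disjoint and $|J_v| \geq (m+1-\delta)\log n > d_v$ by the hypothesis on $\overline{d}$, each $J_v$ already contains more than enough selection steps to realize the required degree.

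I would build the prescribed realization step-by-step, with $i$ decreasing from $n$ down to $t_n = \ceil{(\log n)^2}$. Call a tree present in $F_i$ \emph{$[k]$-free} if it contains no vertex of $[k]$, and note that at most $k$ of the $i$ current trees fail this property. For each step $i \in \Omega_1$ with $i \in J_v$ for the (by disjointness unique) $v$, I prescribe that the selection tuple $(a_{i,1},\dots,a_{i,(m+1)\land i})$ lists $t^{(i)}(v)$ together with $m$ distinct $[k]$-free trees, and that $\xi_i$ points to one of those $[k]$-free trees, so that $v$ wins. For each step $i \in \Omega_1 \setminus \bigcup_v J_v$, I prescribe that all $(m+1)\land i$ selected trees are $[k]$-free. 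Among the $J_v$-steps I insist additionally that, for the $d_v$ largest elements of $J_v$, vertex $v$ wins; this will guarantee $d_n(v) \geq d_v$ via \eqref{eq:express_degree_via_selection_set}, since those are precisely the first $d_v$ selections of $v$ in the decreasing ordering used there. For steps $i < t_n$, any prescription is admissible.

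The main obstacle I expect is verifying the feasibility of this construction at every step: that there really are $m$ (or $m+1$) $[k]$-free trees available in $F_i$, and that the prescription never merges two distinct vertices of $[k]$ into a single tree (which would create an unintended simultaneous selection at a later step). The truncation threshold $t_n = \ceil{(\log n)^2}$ should ensure, for $n$ sufficiently large, that $i - k \geq t_n - k \geq m+1$, so there are always enough $[k]$-free trees to fill the prescribed slots. Disjointness of the $J_v$ guarantees that at most one vertex of $[k]$ is selected at any single step of $\Omega_1$, and since every merger in the construction absorbs only $[k]$-free trees into $v$'s tree, the number of $[k]$-containing trees stays at most $k$ throughout, which keeps the counting argument valid across all of $\Omega_1$.

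To finish, I would note that each single-step prescription has conditional probability at least $\binom{i}{(m+1)\land i}^{-1}\cdot ((m+1)\land i)^{-1}$, and that the selection tuples together with the die rolls are independent across steps in the coalescent construction. Hence the joint probability of the prescribed outcome on $\Omega_1$ is bounded below by a strictly positive finite product, and combining with any positive-probability behavior for steps $i < t_n$ yields $\prob(\overline{\cS}_{n,1} = \overline{J},\, d_n(v) \geq d_v,\, v \in [k]) > 0$, that is, $\overline{J} \in \cA_{\overline{d}}$.
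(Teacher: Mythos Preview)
The paper does not supply its own proof of this lemma: it is quoted from \cite{Eslava.2021} with the remark that the RRT argument there ``can be generalized in a straightforward way,'' and the paper further comments that a version with the different truncation $t_n=\lceil n^{\delta/(m+1)}\rceil$ was implicitly handled in Section~\ref{subsec:asymptotic_joint_degree_distribution}. So there is no in-paper argument to compare against.

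Your construction is correct. The crucial structural point, which you identify, is that under your prescription every $v\in[k]$ \emph{wins} at each step of $J_v$ and is never selected otherwise on $\Omega_1$; hence each $v$ remains a root throughout $\Omega_1$, so the only tree containing $v$ is the one rooted at $v$, and the trees $t^{(i)}(1),\ldots,t^{(i)}(k)$ stay distinct and exhaust the non-$[k]$-free trees. This simultaneously guarantees $\cS_{n,1}(w)=J_w$ exactly for every $w\in[k]$ and keeps the count of $[k]$-free trees at $i-k\geq t_n-k\geq m+1$ for $n$ large. Two minor remarks: (i) your sentence ``Among the $J_v$-steps I insist additionally that, for the $d_v$ largest elements of $J_v$, vertex $v$ wins'' is redundant, since you already prescribed a $[k]$-free loser at \emph{every} $J_v$-step; (ii) the argument as written needs $n$ large enough that $\lceil(\log n)^2\rceil\geq k+m+1$, which is how the lemma is used downstream anyway.
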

The next lemma shows that the conditions for truncated selection sets, which allow the decoupling of label and degree of vertices $1,\ldots,k$ as in Lemma \ref{lemma:decouple_label_degree_multiple_vertices}, are asymptotically satisfied.
\begin{lemma}[Lemma 3.2, \cite{Eslava.2021}]
    \label{lemma:truncated_selection_sets_decouple_label_degree_whp}
    Fix an integer $k\in\N$ and $\delta\in(0,m+1)$ and let $t_n\coloneq\ceil{(\log n)^2}$. Then,
    \begin{equation}
        \prob(\overline{\cS}_{n,1}\in\cB_{n,\delta})=1-o(1).
    \end{equation}
\end{lemma}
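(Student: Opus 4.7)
The plan is to apply a union bound to the two defining conditions of $\cB_{n,\delta}$: (i) pairwise disjointness of $\cS_{n,1}(1),\ldots,\cS_{n,1}(k)$, and (ii) the cardinality bounds $\bigl||\cS_{n,1}(v)|-(m+1)\log n\bigr|\leq\delta\log n$ for every $v\in[k]$. It then suffices to show that each of the complementary events occurs with probability $o(1)$ as $n\to\infty$, and the desired conclusion follows at once.

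For disjointness, the key observation is that on the event $\{\tau_k<t_n\}$ no two distinct vertices of $[k]$ are simultaneously selected at any step $i\geq t_n$, which forces the truncated selection sets $\cS_{n,1}(1),\ldots,\cS_{n,1}(k)$ to be pairwise disjoint by definition. With $t_n=\ceil{(\log n)^2}$, Proposition \ref{prop:tightness_tau_k} immediately gives
\[
\prob(\tau_k\geq t_n)\leq\frac{(m+1)mk(k-1)}{t_n-m-1}=O\!\left(\frac{1}{(\log n)^2}\right)=o(1).
\]

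For the cardinality condition, I would fix $v\in[k]$ and write $|\cS_{n,1}(v)|=\sum_{i=t_n}^n s_{v,i}$ as a sum of independent Bernoulli random variables with $\prob(s_{v,i}=1)=(m+1)/i$ for $i\geq m+1$. The harmonic approximation yields $\E{|\cS_{n,1}(v)|}=(m+1)\log n+O(\log\log n)$, so for $n$ large the event of interest is contained in $\bigl\{\,\bigl||\cS_{n,1}(v)|-\E{|\cS_{n,1}(v)|}\bigr|>\tfrac{\delta}{2}\log n\,\bigr\}$. A two-sided Bernstein inequality, exactly parallel to the computation in Lemma \ref{lemma:tail_bound_truncated_selection_set}, bounds this by $\exp(-c\log n)=n^{-c}$ for a constant $c>0$ depending only on $m$ and $\delta$. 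A union bound over the $k$ vertices preserves the $o(1)$ estimate, and combining with the disjointness bound completes the argument.

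There is no real obstacle here: the two ingredients, tightness of $\tau_k$ from Proposition \ref{prop:tightness_tau_k} and concentration of $|\cS_{n,1}(v)|$ around $(m+1)\log n$ from the Bernstein-type estimate used already in Lemma \ref{lemma:tail_bound_truncated_selection_set}, are both in hand. The only minor check is that the slower truncation scale $t_n=\ceil{(\log n)^2}$ (in place of $n^\varepsilon$ used earlier) is still compatible with the required control; this is the case because $\log t_n=2\log\log n=o(\log n)$ is negligible relative to the window $\delta\log n$ allowed in the definition of $\cB_{n,\delta}$, so the truncation removes only a lower-order correction from the expectation of $|\cS_{n,1}(v)|$.
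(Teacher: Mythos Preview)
Your proposal is correct. The paper does not actually prove this lemma: it is stated as Lemma~3.2 of \cite{Eslava.2021} with the remark that the RRT result ``can be generalized in a straightforward way'' to RRDAGs, and no further argument is given. Your sketch is precisely the natural way to carry out that straightforward generalization, combining the tightness of $\tau_k$ from Proposition~\ref{prop:tightness_tau_k} for the disjointness condition with a Bernstein bound (as in Lemma~\ref{lemma:tail_bound_truncated_selection_set}) for the cardinality condition, and correctly noting that the truncation at $t_n=\ceil{(\log n)^2}$ only shifts the mean of $|\cS_{n,1}(v)|$ by $O(\log\log n)$, which is absorbed by the window $\delta\log n$.
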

The last lemma is a formal statement regarding the asymptotic independence of the elements in $\overline{\cS}_{n,1}$ for any $k\in\N$ that is, in particular, responsible for Theorem \ref{theorem:joint_degree_distribution}. Let $\overline{\cR}_{n,1}\coloneq(\cR_{n,1}(1),\ldots,\cR_{n,1}(k))$ be $k$ independent copies of $\cS_{n,1}(1)$. Then, we have the following result:
\begin{lemma}[Lemma 3.2, \cite{Eslava.2021}]
    \label{lemma:replace_selection_sets_by_independent_copies}
    Fix an integer $k\in\N$ and $\delta\in(0,m+1)$ and let $t_n\coloneq\ceil{(\log n)^2}$. Uniformly over $\overline{J}\in\cB_{n,\delta}$,
    \begin{equation}
        \prob(\overline{\cS}_{n,1}=\overline{J})=\prob(\overline{\cR}_{n,1}=\overline{J})(1+o(1)).
    \end{equation}
\end{lemma}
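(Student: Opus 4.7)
The plan is to compute $\prob(\overline{\cS}_{n,1} = \overline{J})$ as an explicit telescoping product over $i \in \Omega_1$, do the same for the independent side, and control the resulting ratio by Taylor expansion. For the independent side, each coordinate $\cR_{n,1}(v)$ consists of independent Bernoulli$((m+1)/i)$ variables (for $i \geq t_n$, which is well above $m+1$), and the coordinates of $\overline{\cR}_{n,1}$ are independent, so writing $S_i \coloneq \{v \in [k]\colon i \in J_v\}$,
\begin{equation*}
\prob(\overline{\cR}_{n,1} = \overline{J}) = \prod_{i=t_n}^n \Big(\frac{m+1}{i}\Big)^{|S_i|}\Big(1 - \frac{m+1}{i}\Big)^{k - |S_i|}.
\end{equation*}

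The coalescent side rests on a structural invariant that I would prove by backward induction on $i$: on the event $\{\overline{\cS}_{n,1} = \overline{J}\}$ with $\overline{J} \in \cB_{n,\delta}$, for every $i \in \Omega_1$ the tree $t^{(i)}(v)$ contains no vertex of $[k] \setminus \{v\}$ (in particular the $t^{(i)}(1), \ldots, t^{(i)}(k)$ are pairwise distinct). The base case $i = n$ is trivial. For the induction step, disjointness of $\overline{J}$ forces $|S_i| \leq 1$; if $S_i = \emptyset$, no relevant tree is touched, and if $S_i = \{v\}$, the co-selected partner trees contain no other $[k]$-vertex (otherwise some $v' \neq v$ would also be selected). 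Whether $v$'s tree wins or loses the die roll, each $v' \neq v$ still lies in a tree containing no other $[k]$-label at step $i-1$, and $v$ likewise. Given this invariant, the partition of $[k]$ induced by $v \mapsto t^{(i)}(v)$ is the finest at every step, and since the conditional probability that step $i$'s selection matches $S_i$ (given the past and a finest partition) equals $\binom{i-k}{m+1-|S_i|}/\binom{i}{m+1}$ and itself preserves the invariant, telescoping yields
\begin{equation*}
\prob(\overline{\cS}_{n,1} = \overline{J}) = \prod_{i=t_n}^n \frac{\binom{i-k}{m+1-|S_i|}}{\binom{i}{m+1}}.
\end{equation*}

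A routine Taylor expansion of the per-step log-ratio in $1/i$ gives
\begin{equation*}
\log \frac{\binom{i-k}{m+1-|S_i|}/\binom{i}{m+1}}{((m+1)/i)^{|S_i|}(1-(m+1)/i)^{k-|S_i|}} = \begin{cases} O(1/i^2), & |S_i| = 0, \\ (k-1)/i + O(1/i^2), & |S_i| = 1. \end{cases}
\end{equation*}
Summing over $i \in \Omega_1$ with $t_n = \lceil (\log n)^2 \rceil$, the $O(1/i^2)$ contributions aggregate to $O(1/t_n) = O((\log n)^{-2})$. The number of $i$ with $|S_i| = 1$ equals $\sum_v |J_v| \leq k(m+1+\delta)\log n$ by the definition of $\cB_{n,\delta}$, and each such $i$ is at least $t_n$, so those terms contribute at most $k(k-1)(m+1+\delta)(\log n)/t_n = O(1/\log n)$. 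Both estimates are uniform in $\overline{J} \in \cB_{n,\delta}$, giving $\log(\prob(\overline{\cS}_{n,1} = \overline{J})/\prob(\overline{\cR}_{n,1} = \overline{J})) = o(1)$ uniformly, which is equivalent to the claimed $1+o(1)$ factor.

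The main obstacle is the structural invariant in the DAG setting: unlike for RRTs, a vertex can lie in several trees at once and $t^{(i)}(v)$ is the lex-minimal one, so one must carefully verify that merges never cause the lex-minimal trees containing two distinct $v, v' \in [k]$ to coincide. The disjointness built into $\overline{J} \in \cB_{n,\delta}$, via the $|S_i| \leq 1$ control at each step, is exactly what rules this out. A secondary subtlety is that the leading $(k-1)/i$ correction when $|S_i| = 1$ is of order $1/i$ rather than $1/i^2$, but this is saved by the fact that there are only $O(\log n)$ such steps and $t_n$ grows faster than $\log n$.
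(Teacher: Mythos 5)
The paper itself offers no proof of this lemma: it is imported from \cite{Eslava.2021} (where it is proved for RRTs, i.e.\ $m=1$) with the remark that the generalization is straightforward. Your computation of $\prob(\overline{\cR}_{n,1}=\overline{J})$ and the Taylor/summation bookkeeping at the end are correct. The gap is the structural invariant, precisely at the point you flag as ``the main obstacle''. Your induction step asserts that if $S_i=\{v\}$ then the co-selected partner trees contain no other $[k]$-vertex, ``otherwise some $v'\neq v$ would also be selected''. This conflates ``some tree containing $v'$ is selected'' with ``$t^{(i)}(v')$ is selected'': the indicator $s_{v',i}$ only records selection of the lex-first tree containing $v'$, so a non-first tree containing $v'$ can be selected and merged without $i$ entering $J_{v'}$. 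Concretely, take $m=2$, $k=2$: at step $i_1$ select the trees $\{1\},\{8\},\{9\}$ with $\{1\}$ losing, so $i_1\in J_1$ only, and vertex $1$ now lies in the two trees $\{1,8\}$ and $\{1,9\}$ with $t^{(i_1-1)}(1)=\{1,8\}$. At step $i_2=i_1-1$ select $\{1,9\},\{2\},\{7\}$ with $\{2\}$ losing: then $s_{1,i_2}=0$ (since $\{1,8\}$ was not selected) and $s_{2,i_2}=1$, so disjointness of the selection sets is preserved, yet afterwards the tree rooted at $9$ is $\{1,2,9\}$, which is lex-first both among trees containing $1$ (compare $(1,2,9)$ with $(1,8)$) and among trees containing $2$ (compare with $(2,7)$). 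Hence $t^{(i_2-1)}(1)=t^{(i_2-1)}(2)$, the invariant fails, and on such histories the per-step conditional probability is no longer $\binom{i-k}{m+1-|S_i|}/\binom{i}{m+1}$ (it is even zero whenever $S_i$ separates $1$ from $2$). The exact telescoping identity for $\prob(\overline{\cS}_{n,1}=\overline{J})$ is therefore false for $m\ge 2$.

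To repair the argument you would need to work on the good event that the sets of roots whose trees contain $v$, for $v\in[k]$, stay pairwise disjoint throughout $\Omega_1$ (or at least that the lex-first trees never coincide), show that the contribution of its complement to $\prob(\overline{\cS}_{n,1}=\overline{J})$ is uniformly negligible, and only then telescope. This is a genuinely new difficulty for $m\ge 2$ --- after each loss a vertex acquires membership in $m$ further trees, so the number of trees containing a fixed vertex grows without bound as the coalescent proceeds --- and it is exactly what makes the ``straightforward generalization'' claimed by the paper nontrivial. For $m=1$ every vertex lies in exactly one tree at every step, and your proof is correct as written.
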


\subsection{Labels of vertices with given large degrees}
\label{subsec:labels_multiple_vertices}

Equipped with these lemmas, we are ready to prove Theorem \ref{theorem:label_multiple_vertices_conditional_degrees}.
\begin{proof}[Proof of Theorem \ref{theorem:label_multiple_vertices_conditional_degrees}]
    For $v\in[k]$, let
    \begin{equation}
        \ell_v\coloneq n\exp\left(-\frac{d_v}{m+1}+x_v\sqrt{\frac{d_v}{(m+1)^2}}\right),
    \end{equation}
    with $(x_v)_{v\in[k]}\in\R^k$. Then, \eqref{eq:label_multiple_vertices_theorem_statement} is equivalent to
    \begin{equation}
    \label{eq:multiple_vertices_statement_in_terms_x_v}
        \lim_{n\to\infty}\prob(\ell_n(v)\geq\ell_v,v\in[k]\,|\,d_n(v)\geq d_v,v\in[k])
         = \prod_{v=1}^k\prob(M>x_v).
    \end{equation}
    For \eqref{eq:multiple_vertices_statement_in_terms_x_v}, it suffices to prove that
    \begin{equation}
    \label{eq:multiple_vertices_proof_rephrasement}
        \prob(\ell_n(v)\geq\ell_v,d_n(v)\geq d_v,v\in[k])=(1+o(1))\left(\frac{m}{m+1}\right)^{\sum_{v=1}^k d_v}\prod_{v=1}^k\prob(M>x_v),
    \end{equation}
    since then, by Theorem \ref{theorem:joint_degree_distribution},
    \begin{equation}
        \begin{split}
            &\lim_{n\to\infty}\prob(\ell_n(v)\geq\ell_v,v\in[k]\,|\,d_n(v)\geq d_v,v\in[k])\\
            &=\lim_{n\to\infty}\frac{(1+o(1))\left(\frac{m}{m+1}\right)^{\sum_{v=1}^k d_v}\prod_{v=1}^k\prob(M>x_v)}{\prob(d_n(v)\geq d_v,v\in[k])}=\prod_{v=1}^k\prob(M>x_v).
        \end{split}
    \end{equation}
    Let $t_n\coloneq\ceil{(\log n)^2}$, and define, for $\overline{J}\in\cP(\Omega_1)^k$, the quantities
    \begin{equation}
        \begin{split}
            &f_n(\overline{J})\coloneq\prob(\ell_n(v)\geq\ell_v,d_n(v)\geq d_v,v\in[k]\,|\,\overline{\cS}_{n,1}=\overline{J}),\\
            &g_n(\overline{J})\coloneq\prod_{v=1}^k\prob(\ell_n(v)\geq\ell_v,d_n(v)\geq d_v\,|\,\cS_{n,1}(v)=J_v).
        \end{split}
    \end{equation}
    For $v\in[k]$, let $a_v\coloneq \limsup_{n\to\infty}d_v/\log n$. Then, we take $c\in(\max_{v\in[k]}a_v,m+1)$ and set $\delta\coloneq m+1-c$ such that $\cB_{n,\delta}\subset\cA_{\overline{d}}$ by Lemma \ref{lemma:disjoint_selection_sets_enable_event}. Using the tower property, we deduce
    \begin{equation}
    \label{eq:multiple_vertices_decompose_probability_two_expectations}
        \begin{split}
            &\prob(\ell_n(v)\geq\ell_v,d_n(v)\geq d_v,v\in[k])\\
            &=\E{f_n(\overline{\cS}_{n,1})}=\E{f_n(\overline{\cS}_{n,1})\ind_{\{\overline{\cS}_{n,1}\in\cB_{n,\delta}\}}}+\E{f_n(\overline{\cS}_{n,1})\ind_{\{\overline{\cS}_{n,1}\in\cA_{\overline{d}}\setminus\cB_{n,\delta}\}}},
        \end{split}
    \end{equation}
    where the last step follows from the fact that, for $\overline{J}\notin\cA_{\overline{d}}$, we have $f_n(\overline{J})=0$ or $\prob(\overline{\cS}_{n,1}=\overline{J})=0$. Now, consider the first term on the right-hand side. The truncated selection sets in $\cB_{n,\delta}$ are disjoint by definition, and therefore we have $f_n(\overline{J})=g_n(\overline{J})$ for all $\overline{J}\in\cB_{n,\delta}$ by Lemma \ref{lemma:decouple_label_degree_multiple_vertices}, with $n$ sufficiently large. Together with Lemma \ref{lemma:replace_selection_sets_by_independent_copies}, this yields
    \begin{equation}
    \label{eq:multiple_vertices_deal_with_expectation_B_nd}
        \begin{split}
            \E{f_n(\overline{\cS}_{n,1})\ind_{\{\overline{\cS}_{n,1}\in\cB_{n,\delta}\}}}&=\sum_{\overline{J}\in\cB_{n,\delta}}f_n(\overline{J})\prob(\overline{\cS}_{n,1}=\overline{J})\\
            &=\sum_{\overline{J}\in\cB_{n,\delta}}g_n(\overline{J})\prob(\overline{\cR}_{n,1}=\overline{J})(1+o(1))\\
            &=\E{g_n(\overline{\cR}_{n,1})\ind_{\{\overline{\cR}_{n,1}\in\cB_{n,\delta}\}}}(1+o(1)).
        \end{split}
    \end{equation}
    Moreover, since
    \begin{equation}
        f_n(\overline{J}),g_n(\overline{J})\leq\left(\frac{m}{m+1}\right)^{\sum_{v=1}^k d_v},
    \end{equation}
    when $\overline{J}\in\cA_{\overline{d}}$ by e.g.\ \eqref{eq:degree_upper_bound_conditional_DA}, and using Lemmas \ref{lemma:truncated_selection_sets_decouple_label_degree_whp} and \ref{lemma:replace_selection_sets_by_independent_copies}, we derive
    \begin{equation}
    \label{eq:multiple_vertices_deal_with_expectation_A_d_B_nd}
        \begin{split}
            &\left|\E{f_n(\overline{\cS}_{n,1})\ind_{\{\overline{\cS}_{n,1}\in\cA_{\overline{d}}\setminus\cB_{n,\delta}\}}}-\E{g_n(\overline{\cR}_{n,1})\ind_{\{\overline{\cR}_{n,1}\in\cA_{\overline{d}}\setminus\cB_{n,\delta}\}}}\right|\\
            &\leq\left(\frac{m}{m+1}\right)^{\sum_{v=1}^k d_v}\left(\prob(\overline{\cS}_{n,1}\in\cA_{\overline{d}}\setminus\cB_{n,\delta})+\prob(\overline{\cR}_{n,1}\in\cA_{\overline{d}}\setminus\cB_{n,\delta})\right)\\
            &\leq\left(\frac{m}{m+1}\right)^{\sum_{v=1}^k d_v}\left(2-2\prob(\overline{\cS}_{n,1}\in\cB_{n,\delta})(1+o(1))\right)\\
            &=o\left(\left(\frac{m}{m+1}\right)^{\sum_{v=1}^k d_v}\right).
        \end{split}
    \end{equation}
    Thus, combining \eqref{eq:multiple_vertices_decompose_probability_two_expectations}, \eqref{eq:multiple_vertices_deal_with_expectation_B_nd} and \eqref{eq:multiple_vertices_deal_with_expectation_A_d_B_nd}, we arrive at
    \begin{equation}
    \label{eq:multiple_vertices_probability_as_expectation_of_independent_rvs}
        \prob(\ell_n(v)\geq\ell_v,d_n(v)\geq d_v,v\in[k])=\E{g_n(\overline{\cR}_{n,1})}(1+o(1))+o\left(\left(\frac{m}{m+1}\right)^{\sum_{v=1}^k d_v}\right).
    \end{equation}
    Since the elements of $\overline{\cR}_{n,1}$ are i.i.d., we obtain
    \begin{equation}
        \begin{split}
            \E{g_n(\overline{\cR}_{n,1})}&=\prod_{v=1}^k\prob(\ell_n(v)\geq\ell_v,d_n(v)\geq d_v)\\
            &=\prod_{v=1}^k\prob(\ell_n(v)\geq\ell_v\,|\,d_n(v)\geq d_v)\,\prob(d_n(v)\geq d_v).
        \end{split}
    \end{equation}
    By \eqref{eq:multiple_vertices_probability_as_expectation_of_independent_rvs}, we thus have
    \begin{equation}
        \begin{split}
            &\prob(\ell_n(v)\geq\ell_v,d_n(v)\geq d_v,v\in[k])\\
            &=(1+o(1))\prod_{v=1}^k\prob(\ell_n(v)\geq\ell_v\,|\,d_n(v)\geq d_v)\,\prob(d_n(v)\geq d_v)+o\left(\left(\frac{m}{m+1}\right)^{\sum_{v=1}^k d_v}\right)\\
            &=(1+o(1))\left(\frac{m}{m+1}\right)^{\sum_{v=1}^k d_v}\prod_{v=1}^k\prob(M>x_v),
        \end{split}
    \end{equation}
    where we apply Theorems \ref{theorem:joint_degree_distribution} and \ref{theorem:ungreedy_depth_single_vertex} in the last step. Since the result coincides with \eqref{eq:multiple_vertices_proof_rephrasement}, as desired, the proof is concluded.
\end{proof}

\newpage

\section{Discussion}

\textbf{Thesis overview. }In this Master's thesis, we have considered properties of vertices in the RRDAG model as a generalization of the RRT model. The key to our observations has been the construction of RRDAGs using a version of Kingman's coalescent that we have generalized from the RRT model. This allowed us to represent the degree, the label, and the ungreedy depth of single or multiple uniform vertices in an RRDAG in a simplified manner compared to the recursive construction of RRDAGs that has been strongly prevalent in the literature. Using our construction, we derived, in terms of large degrees, results on the asymptotic joint degree distribution of a fixed number of uniform vertices, the number of vertices with a certain degree, and the maximum degree. On the other hand, for a single uniform vertex conditional on having a large degree, we obtained the asymptotic behavior of its label and ungreedy depth. Similarly, for multiple uniform vertices given large degrees, we deduced the asymptotic behavior of their labels.\par

\textbf{Extended results for large degrees. }In the following, we discuss possible generalizations and extensions of the results obtained in this work. First, we consider the behavior of the asymptotic joint degree distribution of vertices $1,\ldots,k$ for even larger degrees than in Theorem \ref{theorem:joint_degree_distribution}. Since the expectation of the number of selections of each vertex in Kingman's ($m,n$)-coalescent construction is roughly $(m+1)\log n$, one suspects that the probability of vertices having degrees of a larger order than $(m+1)\log n$ vanishes. Furthermore, for degrees sufficiently close to $(m+1)\log n$, the limit of the joint degree distribution is non-zero, and we can extend Theorem \ref{theorem:joint_degree_distribution} in the following way.
\begin{theorem}[Extension of Theorem \ref{theorem:joint_degree_distribution}]
    Fix $k\in\N$. Let $(\varepsilon_{v,n})_{n\in\N}$ be such that $\varepsilon_{v,n}\in\R$ and $d_v\coloneq(m+1+\varepsilon_{i,n})\log n\in\N$ for $n\in\N$ and $v\in[k]$, and let $M$ denote a standard normal random variable.\\
    If $\varepsilon_{v,n}\to\varepsilon_v$ as $n\to\infty$ for all $v\in[k]$, then there exists a constant $\alpha>0$ such that
    \begin{equation}
        \prob(d_n(v)\geq d_v, v \in [k]) = 
        \begin{cases*}
            \left(\frac{m}{m+1}\right)^{\sum_v d_v}(1+o(n^{-\alpha})) &\mbox{ if $\varepsilon_v<0$ for all  $v\in[k]$},\\
            o\left(n^{-\alpha}\left(\frac{m}{m+1}\right)^{\sum_v d_v}\right) &\mbox{ if $\varepsilon_v>0$ for all  $v\in[k]$}.
        \end{cases*}
    \end{equation}
    If $\varepsilon_{v,n}\sqrt{\log n}\to\varepsilon_v$ as $n\to\infty$  for all $v\in[k]$, then there exists a constant $\alpha>0$ such that
    \begin{equation}
        \prob(d_n(v)\geq d_v, v \in [k]) =(1+o(1))\left(\frac{m}{m+1}\right)^{\sum_v d_v}\prod_{v=1}^k\prob\left(M>\frac{\varepsilon_v}{\sqrt{m+1}}\right).
    \end{equation}
\end{theorem}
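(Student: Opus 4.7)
The plan is to sandwich $\prob(d_n(v)\geq d_v, v \in [k])$ using Lemmas \ref{lemma:upper_bound_tail_vertex_degrees} and \ref{lemma:lower_bound_tail_vertex_degrees}, exactly as in the proof of Theorem \ref{theorem:joint_degree_distribution}, but with a sharper analysis of the joint law of $(S_n(v))_{v\in[k]}$ in the critical regime near $(m+1)\log n$. The case $\varepsilon_v<0$ for all $v$ is immediate: for $n$ large, $d_v<(m+1-\min_v|\varepsilon_v|/2)\log n$, so Theorem \ref{theorem:joint_degree_distribution} applies with $c=m+1-\min_v|\varepsilon_v|/2\in(0,m+1)$. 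The case $\varepsilon_v>0$ for all $v$ reduces, via Lemma \ref{lemma:upper_bound_tail_vertex_degrees}, to bounding $\prob(S_n(v_0)\geq d_{v_0})$ for any fixed $v_0$: since $S_n(v_0)$ is a sum of independent Bernoullis with mean $(m+1)\log n+O(1)$ and the deviation $d_{v_0}-\E{S_n(v_0)}=\varepsilon_{v_0}\log n(1+o(1))$ is of order $\log n$, Bernstein's inequality gives $\prob(S_n(v_0)\geq d_{v_0})=n^{-\beta_{v_0}+o(1)}$ with $\beta_{v_0}=\varepsilon_{v_0}^2/(2(m+1)+2\varepsilon_{v_0}/3)>0$; any $\alpha<\min_v\beta_v$ then delivers the stated bound.

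For the CLT regime ($\varepsilon_{v,n}\sqrt{\log n}\to\varepsilon_v$), set $t_n\coloneq\ceil{(\log n)^2}$ and write $\cS_{n,1}(v)=\cS_n(v)\cap[t_n,n]$. First I would establish the marginal convergence: since $\E{|\cS_{n,1}(v)|}=(m+1)\log n+O(\log\log n)$ and $\Var(|\cS_{n,1}(v)|)$ is of the same order, the Lindeberg--Feller theorem gives
\begin{equation*}
\frac{|\cS_{n,1}(v)|-(m+1)\log n}{\sqrt{(m+1)\log n}}\toindis\mathcal{N}(0,1),
\end{equation*}
and combining with $(d_v-(m+1)\log n)/\sqrt{(m+1)\log n}\to\varepsilon_v/\sqrt{m+1}$ produces $\prob(|\cS_{n,1}(v)|\geq d_v)\to\prob(M>\varepsilon_v/\sqrt{m+1})$. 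For the joint limit, the key tool is Lemma \ref{lemma:replace_selection_sets_by_independent_copies}: on the high-probability event $\{\overline{\cS}_{n,1}\in\cB_{n,\delta}\}$ (Lemma \ref{lemma:truncated_selection_sets_decouple_label_degree_whp}), the law of $\overline{\cS}_{n,1}$ agrees with that of $k$ i.i.d.\ copies $\overline{\cR}_{n,1}$ up to a $(1+o(1))$ factor; a union bound yields $\prob(\overline{\cR}_{n,1}\in\cB_{n,\delta})=1-o(1)$ as well, since pairwise overlap of the i.i.d.\ copies occurs with probability $O(1/\log^2 n)$, so summing over admissible $\overline{J}$ produces
\begin{equation*}
\prob(|\cS_{n,1}(v)|\geq d_v,v\in[k])=\prod_{v=1}^k\prob(M>\varepsilon_v/\sqrt{m+1})+o(1).
\end{equation*}

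With this joint CLT in hand, the lower bound in the CLT regime follows directly from Lemma \ref{lemma:lower_bound_tail_vertex_degrees} applied with $I=t_n$, since $\prob(\tau_k\geq t_n)=O(1/t_n)=o(1)$ by Proposition \ref{prop:tightness_tau_k}. For the matching upper bound from Lemma \ref{lemma:upper_bound_tail_vertex_degrees}, I would decompose $S_n(v)=|\cS_{n,1}(v)|+Z_n(v)$ with $Z_n(v)=|\cS_n(v)\cap[2,t_n-1]|$ of mean $O(\log\log n)$, and condition on $\{\max_v Z_n(v)\leq C\log\log n\}$: on this set $\prob(S_n(v)\geq d_v,v\in[k])\leq\prob(|\cS_{n,1}(v)|\geq d_v-C\log\log n,v\in[k])$, and the preceding joint CLT yields the same limit because $\log\log n=o(\sqrt{\log n})$, while the complementary event contributes $o_C(1)$ by Bernstein; letting $C\to\infty$ closes the sandwich. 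The main obstacle will be running the asymptotic-independence argument (Lemma \ref{lemma:replace_selection_sets_by_independent_copies}) cleanly at the critical scale: that lemma is stated on the coarse window $\cB_{n,\delta}$ (cardinalities within $\delta\log n$ of $(m+1)\log n$), so one must verify that intersecting with the finer $\sqrt{\log n}$-scale event $\{|\cS_{n,1}(v)|\geq d_v\}$ does not spoil the $(1+o(1))$ comparison to the i.i.d.\ reference $\overline{\cR}_{n,1}$.
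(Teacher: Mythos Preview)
Your proposal is correct and matches the paper's own (very brief) sketch: the paper states that the first result ``follows directly from Theorem \ref{theorem:joint_degree_distribution} and an application of the Bernstein inequality'' and that the second result ``is similar to the proof of Theorem \ref{theorem:label_multiple_vertices_conditional_degrees} but needs adapted versions of Lemmas \ref{lemma:truncated_selection_sets_decouple_label_degree_whp} and \ref{lemma:replace_selection_sets_by_independent_copies}.'' Your sandwich via Lemmas \ref{lemma:upper_bound_tail_vertex_degrees}/\ref{lemma:lower_bound_tail_vertex_degrees} and the Lindeberg--Feller CLT for $|\cS_{n,1}(v)|$, together with the asymptotic independence from Lemma \ref{lemma:replace_selection_sets_by_independent_copies}, is exactly this program spelled out, and the obstacle you flag at the end---that Lemma \ref{lemma:replace_selection_sets_by_independent_copies} is stated only on the coarse window $\cB_{n,\delta}$ and must be verified to interact correctly with the $\sqrt{\log n}$-scale event---is precisely what the paper means by ``adapted versions.''
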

The proof of the first result follows directly from Theorem \ref{theorem:joint_degree_distribution} and an application of the Bernstein inequality. The proof of the second result is similar to the proof of Theorem \ref{theorem:label_multiple_vertices_conditional_degrees} but needs adapted versions of Lemmas \ref{lemma:truncated_selection_sets_decouple_label_degree_whp} and \ref{lemma:replace_selection_sets_by_independent_copies}.\par

\textbf{Depth of RRDAGs. }Analogous to the ungreedy depth of a vertex, we can define the greedy depth of a vertex in an RRDAG as the length of the unique directed path from that vertex to the root that visits the lowest possible labeled vertex at each step along the path. On the other hand, for a vertex $v$ in Kingman's coalescent, this path is created in the following way: Initially, vertex $v$ has no connections. After it loses a die roll, it sends outgoing edges to $m$ roots. We then wait until the last of these $m$ roots has lost, and add the directed edge from $v$ to this last losing root to the path we are constructing. The last losing root is then again connected to $m$ roots, and we wait until the last of these roots loses, adding the corresponding edge to the path, and so on. Unlike for the ungreedy depth, we have to wait until $m$ roots have lost to increase the greedy depth, complicating the analysis. Therefore, compared to the recursive construction, the Kingman coalescent construction does not offer an immediate advantage when trying to prove a version of Theorem \ref{theorem:ungreedy_depth_single_vertex} for the greedy depth. The same holds for attempting to prove a version of Theorem \ref{theorem:ungreedy_depth_single_vertex} for the general depth. However, generalizing Theorem \ref{theorem:label_multiple_vertices_conditional_degrees} to include not only the labels but also the ungreedy depths of vertices $1,\dots,k$ seems feasible. The key to this generalization is managing the correlations between the depths of these vertices. To do so, one would need to define a random variable similar to $\tau_k$ from \eqref{eq:definition_tau_k}, representing the first step $i$ at which vertices from the different connection sets $\CC_n^{(i)}(1), \dots, \CC_n^{(i)}(k)$ are selected simultaneously.\par

\textbf{Multiple roots and selection with replacement. }If we consider variations of the RRDAG model, a natural generalization would involve considering RRDAGs with more than one root vertex, as e.g.\ in \cite{Devroye.Lu.1995}. This generalization can be incorporated into the coalescent construction by stopping the coalescent at step $r$, so that $r$ root vertices remain. This construction is distributed the same as an RRDAG with $r$ roots. Thus, the degree, label, and ungreedy depth of vertices can be analyzed similarly to the approach in our work. Another variation of the RRDAG model, studied for example in \cite{Devroye.Janson.2011}, involves connecting each new vertex to $m$ uniform vertices with replacement (instead of without replacement), which allows for the possibility of a multiple edge being present. In Kingman's coalescent, selection with replacement can be realized by selecting $m+1$ vertices at each step, then connecting the losing vertex to $m$ uniform vertices with replacement among the winning vertices. However, in this construction, since winning vertices are no longer guaranteed to increase their degree, the degree of a vertex can no longer be represented by the length of its first winning streak, which complicates the degree analysis.\par

\textbf{RRDAGs with freezing. }As introduced for RRTs, one could also consider an RRDAG model with freezing. In this model, a sequence with entries from the set \( \{-1, 1\} \), called the freezing sequence, determines at each step of the recursive RRDAG construction whether we freeze a vertex already present in the graph or add a new vertex to the graph. Specifically, when the freezing sequence entry is $-1$, a uniform vertex in the graph freezes; when the entry is $1$, a new vertex is uniformly connected to $m$ non-frozen vertices (again without replacement). A natural coalescent construction for this model would be, at each step, uniformly selecting a losing root vertex, followed by selecting $m$ winning vertices from the remaining non-frozen root vertices. This construction matches the law of an RRDAG with freezing. In particular, if the $\{-1,1\}$-sequence consists of i.i.d.\ symmetric Bernoulli random variables with parameter in $(1/2,1]$, it might be possible to analyze the degree and the ungreedy depth in a similar, though more complicated way as in our work.\par

\textbf{Extension to hypergraphs. }Finally, we consider a much broader extension of the standard RRDAG model by exploring directed hypergraphs instead of standard graphs. A directed hypergraph generalizes a graph by introducing directed hyperedges, where each hyperedge connects a set of $a$ vertices, called the tail of the hyperedge, to another set of $b$ vertices, called the head of the hyperedge. A potential coalescent-like construction for a random recursive directed acyclic hypergraph could proceed as follows: In each step, we select $a + mb$ root vertices, then choose $a$ losing root vertices and $m$ groups of winning root vertices, each containing $b$ vertices. Subsequently, we connect the losing roots to each group of winning roots using hyperedges. This model could likely be analyzed in a manner similar to the standard RRDAG model in terms of the degree, label, and ungreedy depth of vertices, with the results obtained in this work still applicable, albeit with changes of constants.

\newpage

\addcontentsline{toc}{section}{References} 

\singlespacing

\renewcommand{\bibfont}{\small}
\printbibliography
	
\end{document}